\newtheorem{theorem}{Theorem}[section]
\newtheorem{lemma}[theorem]{Lemma}
\newtheorem{corollary}[theorem]{Corollary}
\theoremstyle{definition}
\newtheorem{definition}[theorem]{Definition}
\newtheorem{proposition}[theorem]{Proposition}
\newtheorem{note}[theorem]{Note}
\theoremstyle{remark}
\newtheorem{remark}[theorem]{Remark}
\numberwithin{equation}{section}
\title{Dimension preserving set-valued approximation and decomposition via metric sum}
\author[1]{Ekta Agrawal}
\author[2]{Saurabh Verma}
\affil[1,2]{Department of Applied Sciences, IIIT Allahabad, Prayagraj 211015, India}	
\affil[1]{correspondence to: ekta.agrawal5346@gmail.com}
\affil[2]{saurabhverma@iiita.ac.in}
\begin{document}

\date{}
\maketitle







\begin{abstract}
In the literature, the Minkowski-sum and the metric-sum of compact sets are highlighted. While the first is associative, the latter is not. But the major drawback of the Minkowski combination is that, by increasing the number of summands, this leads to convexification. The present article is uncovered in two folds: The initial segment presents a novel approach to approximate a continuous set-valued function with compact images via a fractal approach using the metric linear combination of sets. The other segment contains the dimension analysis of the distance set of graph of set-valued function and solving the celebrated distance set conjecture. In the end, a decomposition of any continuous convex compact set-valued function is exhibited that preserves the Hausdorff dimension, so this will serve as a method for dealing with complicated set-valued functions.
\end{abstract}
{\bf Keywords:} {Set-valued functions, Metric sum, Constrained approximation, Hausdorff dimension, Box dimension, Assouad dimension, Packing dimension, Iterated function system, Fractal functions.}\\
{\bf Math subject Classification:} {Primary 28A80; Secondary 41A10.}
\maketitle



\section{Introduction}
Set-valued functions (SVFs) find diverse applications in various domains, including mathematical modelling, game theory, differential inclusions, control theory, etc. In recent decades, the approximation and integral of SVFs with compact images have been widely studied and explored, for details, see \cite{AV3,Artstein1,Artstein2,Aumann,Dyn1,Dyn2,Dyn3,Hiai,Michta,Vitale}. Motivated by the real-valued case, Vitale \cite{Vitale} approximated SVFs via Bernstein polynomials, replacing the linear combination of real numbers with the Minkowski combination $+$ of sets. However, the Minkowski combination by increasing the number of summands leads to convexification, that is, the resulting set is convex; for details, see \cite{Dyn2}. As an example, consider the middle third Cantor set $\mathfrak{C}$ on the closed interval $[0,1].$ Then the Steinhaus theorem states that  $$\mathfrak{C}+\mathfrak{C}=[0,2],$$
which is convex (see \cite{Steinhaus}). The generalization of the Steinhaus theorem for the class of fractals determined via the Hutchinson-type operator is studied in \cite{Nikodem}. Consequently, Bernstein polynomial approximation is preferable for the SVFs with convex images. Artstein \cite{Artstein1} interpolated and approximated continuous SVFs with general images by constructing piecewise linear interpolants by introducing special pairs for two compact sets, which is later termed as metric pairs $\Lambda(.,.)$, defined in Sec \ref{sec2}. Afterwards, metric linear combination $\oplus$ of two compact sets is defined by utilizing metric pairs. This linear combination of sets is not necessarily convex; as an example $\mathfrak{C}\oplus \mathfrak{C}=2\mathfrak{C},$ not convex. Now, consider a SVF as $f:[0,1]\rightarrow\mathcal{K}(\mathbb{R})$ as $$f(x):=\{w(x),1\},~ \forall ~x\in [0,1],$$ where $w$ is the real-valued Weierstrass function defined on $[0,1].$ Since the Weierstrass function is a highly irregular non-smooth function, $f$ also possesses an irregular nature. It is obvious to observe that piecewise linear interpolation is profoundly less suitable in this scenario. In order to interpolate and approximate these types of functions, our aim is to introduce fractal interpolation of SVFs, utilizing a metric linear combination of compact sets.\\
The notion of Hausdorff dimension $(\dim_{H}(.))$ lies in the centre of fractal geometry. It is evolved through the notion of Hausdorff measure and is defined for any set or graph of a real or vector-valued function. Mauldin and Williams \cite[Theorem 2]{Mauldin} initiated the decomposition of a real-valued continuous function defined on $[0,1]$ preserving the Hausdorff dimension. With the aim of providing an analogy of this work to SVFs, this article will serve as an initial approach in the direction of dimension analysis and decomposition of continuous SVFs with respect to metric-sum, preserving the Hausdorff dimension. To the best of our knowledge, no prior work in this direction was done earlier. \\
The note is arranged as follows: Sec. \ref{sec2} presents details of the definitions and preliminary results necessary for the work. Secs. \ref{sec3} and \ref{sec4} comprised the main results. 
\section{Preliminaries}\label{sec2}
Let $\mathcal{K}(\mathbb{R})$ denote the collection of all non-empty compact subsets of $\mathbb{R}.$ The Hausdorff distance $\mathfrak{H}$ between any $A,B\in\mathcal{K}(\mathbb{R})$ is elaborated as 
\begin{equation*}
    \mathfrak{H}(A,B):=\max\big\{\max_{a\in A}D(a,B),\max_{b\in B}D(b,A)\big\},
\end{equation*}
where $D(a,B)=\min_{b\in B}|a-b|.$ It is also defined as $$\mathfrak{H}(A,B):=\inf\{\epsilon>0: A\subset B_\epsilon \text{ and } B\subset A_\epsilon\},$$ where $A_\epsilon$ denotes the $\epsilon$-neighbourhood of $A.$
The set of metric pairs of $A$ and $B$ in $\mathcal{K}(\mathbb{R})$ is defined as 
\begin{equation*}
    \Lambda(A,B):=\big\{(a,b)\in A\times B:~ a\in \Lambda_A(b)\text{ or }b\in \Lambda_B(a)\big\},
\end{equation*}
where $\Lambda_A(b):=\{a\in A: D(b,A)=|b-a|\}.$ 
Let $\{A_0,A_1,\ldots,A_N\}\subseteq\mathcal{K}(\mathbb{R}).$ Then the collection of all metric chains is denoted by $\text{Ch}(A_0,A_1,\ldots,A_N)\subseteq A_0\times A_1\times\ldots\times A_N,$ and is defined as
\begin{align*}
     \text{Ch}(A_0&,A_1,\ldots,A_N)\\
     &:=\{(a_0,a_1,\ldots,a_N):(a_j,a_{j+1})\in\Lambda(A_j,A_{j+1}), j=0,1,\ldots, N-1\}.   
\end{align*}
Let $\lambda_0,\lambda_1,\ldots,\lambda_N\in \mathbb{R},$ then the metric linear combination is defined as
\begin{equation*}
    \bigoplus_{j=0}^N\lambda_jA_j:=\Big\{\sum_{j=0}^N\lambda_ja_j:~(a_0,a_1,\ldots,a_N)\in\text{Ch}(A_0,A_1,\ldots,A_N)\Big\}.
\end{equation*}
\begin{note}\cite{Dyn1}\label{note1}
    The basic properties of metric linear combination are as follows:
    \begin{enumerate}
        \item $\bigoplus_{j=0}^N\lambda_jA=\Big(\sum_{j=0}^N\lambda_j\Big)A.$ 
        \item $\bigoplus_{j=0}^N\lambda A_j=\lambda\Big(\bigoplus_{j=0}^NA_j\Big).$
        \item $\bigoplus_{j=0}^N\lambda_jA_j=\bigoplus_{j=0}^N\lambda_{N-j}A_{N-j}.$ 
    \end{enumerate}
\end{note}
In the sequel, the upcoming remark and proposition comment on the associativity of metric linear combination.
\begin{remark}
 It is noted that the Minkowski linear combination of sets is commutative and associative, while metric linear combination of sets is commutative but not associative. As an example, consider $A=\{1,2\}, B=\{7,8,9\}$ and $C=\{-1,-10\}.$ Then $\Lambda(A,B)=\{(1,7),(2,7),(2,8),(2,9)\}, ~\Lambda(B,C)=\{(7,-1),(7,-10),(8,-1),(9,-1)\}.$ So, $\text{Ch}(A,B,C)=\{(1,7,-1),(1,7,-10),(2,7,-1),(2,7,-10),(2,8,-1),(2,9,-1)\}$ and $A\oplus B\oplus C=\{7, -2,8,\\-1,9,10\}.$ Also, $A\oplus B=\{8,9,10,11\}$ and $B\oplus C=\{6,-3,7,8\}.$ It is easily calculated that 
$(A\oplus B)\oplus C=\{7,-2,8,9,10\}$ and $A\oplus (B\oplus C)=\{-2,8,9,10\}.$ Thus, the example illustrates $$(A\oplus B)\oplus C\ne A\oplus (B\oplus C)\ne A\oplus B\oplus C,$$ showing that the metric-sum need not be associative. Again, by definition, observe that $$\text{Ch}(A,B,B)=\{(a,b,b): (a,b)\in \Lambda(A,B)\}.$$
Then $A\oplus B\oplus (-B)=A=A\oplus (B\oplus(-B)).$ But $(A\oplus B)\oplus (-B)\ne A.$ Consider the last example, $A=\{1,2\}, B=\{7,8,9\}$ and $A\oplus B=\{8,9,10,11\}.$ So, $\Lambda(A\oplus B, B)=\{(8,8),(9,9),(10,9),(11,9),(8,7)\}$ and $(A\oplus B)\oplus (-B)=\{0,1,2\}\ne A.$ 
\end{remark}
Let $\mathcal{K}_c(\mathbb{R})$ be the collection of convex compact subsets of $\mathbb{R}.$
\begin{proposition}
    Let $A,B,C\in \mathcal{K}_c(\mathbb{R}).$ Then associativity holds for the metric-sum, that is, 
    $$(A\oplus B)\oplus C= A\oplus (B\oplus C)=A\oplus B\oplus C.$$
\end{proposition}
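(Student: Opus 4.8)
The plan is to reduce everything to the elementary structure of metric pairs for \emph{intervals}. For $A,B\in\mathcal{K}_c(\mathbb{R})$ write $A=[a_1,a_2]$ and $B=[b_1,b_2]$. The first step is to compute $\Lambda_A(b)$ explicitly: for $b\in\mathbb{R}$ the nearest point of $A$ is $b$ itself if $b\in A$, is $a_1$ if $b<a_1$, and is $a_2$ if $b>a_2$, so $\Lambda_A(b)$ is always a single point unless $A$ is a singleton. From this I would deduce a clean description of $\Lambda(A,B)$: a pair $(a,b)\in A\times B$ lies in $\Lambda(A,B)$ iff $a$ is the metric projection of $b$ onto $A$ or $b$ is the metric projection of $a$ onto $B$. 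Using the one-dimensional order structure, one checks that every such pair is of one of two ``monotone'' types — informally, when the two intervals overlap the only metric pairs are the diagonal pairs $(t,t)$ with $t\in A\cap B$, and when they are disjoint (say $A$ lies to the left of $B$) the metric pairs are exactly $\{(a_2,b):b\in B\}\cup\{(a,b_1):a\in A\}$, a ``bent segment'' joining the nearest endpoints. In every case $\Lambda(A,B)$ is a connected monotone polygonal arc in $\mathbb{R}^2$ from $(a_1,b_1)$-ish corner to the $(a_2,b_2)$-ish corner.

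Next I would show that for intervals the metric linear combination collapses to something very concrete. Since each $\Lambda(A_j,A_{j+1})$ is described by at most two linear ``branches,'' a metric chain $(a_0,\dots,a_N)$ is determined by a sequence of branch choices, and the resulting set $\bigoplus_{j=0}^N\lambda_jA_j$ is a finite union of images of boxes under linear maps; but because the branches are monotone and the intervals are convex, all of these images glue up into a single interval. Concretely I expect to prove the key lemma: for $A,B\in\mathcal{K}_c(\mathbb{R})$ and $\lambda,\mu>0$ (the signs can be absorbed using Note \ref{note1}(2) and $-[c,d]=[-d,-c]$), $\lambda A\oplus\mu B$ is again a compact interval, and moreover its endpoints are obtained by pairing endpoints of $A$ and $B$ in the two monotone ways. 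The cleanest route: verify that $\lambda A\oplus\mu B=\{\lambda a+\mu b:(a,b)\in\Lambda(A,B)\}$ is an interval by noting it is the continuous image of the connected set $\Lambda(A,B)$, hence connected, hence an interval, and then identify its min and max. Once $\lambda A\oplus\mu B$ is known to be an interval, iterating this shows every partial metric sum of convex sets is a convex interval.

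The final step is the associativity identity itself. Because $A\oplus B$ is again a convex compact interval (the key lemma), the expression $(A\oplus B)\oplus C$ makes sense within $\mathcal{K}_c(\mathbb{R})$, and I can apply the key lemma once more. So it suffices to check that the three intervals $(A\oplus B)\oplus C$, $A\oplus(B\oplus C)$, and $A\oplus B\oplus C$ have the same pair of endpoints. For each endpoint (minimum, maximum) I would trace through the endpoint-pairing rule from the key lemma: the minimum of $A\oplus B\oplus C$ is $\min\{a+b+c:(a,b,c)\in\mathrm{Ch}(A,B,C)\}$, and one shows this equals the minimum achieved when one picks, greedily, the nearest-endpoint branches — and the same greedy value comes out whether one associates as $(A\oplus B)\oplus C$ or $A\oplus(B\oplus C)$, since in the convex case the metric-projection structure is ``transitive'' in the relevant sense (the projection of an interval onto an interval is an endpoint-preserving affine map). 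A short case analysis on the relative position (overlapping vs.\ disjoint, and on which side) of the three intervals finishes it.

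The main obstacle I anticipate is the bookkeeping in that last case analysis: there are several configurations of three intervals on the line, and for each one must verify that the metric-chain extremizers coincide under the two bracketings. The conceptual content is small — everything is forced by one-dimensional order and convexity — but writing it so the cases do not proliferate will require a good notational choice, most likely describing $\Lambda(A,B)$ uniformly as the set of pairs $(a,b)$ with $a=\mathrm{clip}_A(b)$ or $b=\mathrm{clip}_B(a)$ and exploiting that $\mathrm{clip}$ onto an interval is monotone and fixes order of endpoints.
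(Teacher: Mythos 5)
Your route is sound and genuinely different from the paper's. The paper proves the identity by exhaustive case analysis on the relative position of the three intervals: for each configuration it writes out $\Lambda(A,B)$, $\Lambda(B,C)$, $\mathrm{Ch}(A,B,C)$, $\Lambda(A\oplus B,C)$ and $\Lambda(A,B\oplus C)$ explicitly and checks that every bracketing yields $[\underline{a}+\underline{b}+\underline{c},\ \overline{a}+\overline{b}+\overline{c}]$ (and it only treats a few configurations in detail, leaving ``the remaining cases'' to the reader). Your key lemma --- that for compact intervals $\Lambda(A,B)$ is a connected monotone arc joining $(\min A,\min B)$ to $(\max A,\max B)$, so that $A\oplus B$, being the continuous image of a connected set containing both corner sums, equals $[\min A+\min B,\ \max A+\max B]$, i.e.\ the Minkowski sum --- disposes of $(A\oplus B)\oplus C=A\oplus(B\oplus C)$ in one line and is the cleaner argument. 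What each approach buys: the paper's computation is self-contained but long and repetitive; yours isolates the one structural fact (metric sum of intervals is Minkowski sum) that makes the convex case trivial.

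Two points need tightening. First, your informal description of the overlapping case is wrong as stated: for $A=[0,2]$, $B=[1,3]$ the pair $(0,1)$ lies in $\Lambda(A,B)$ because $1$ is the nearest point of $B$ to $0$, so $\Lambda(A,B)$ is not just the diagonal over $A\cap B$. The correct uniform description is the one you give at the end, $\Lambda(A,B)=\{(\mathrm{clip}_A(b),b):b\in B\}\cup\{(a,\mathrm{clip}_B(a)):a\in A\}$, and this does produce the monotone arc with endpoints $(\min A,\min B)$ and $(\max A,\max B)$ (one of the two clip conditions always certifies each corner, depending on which minimum is smaller); start from that description. Second, the chain-sum $A\oplus B\oplus C$ is the thinnest step: the fiber of $\mathrm{Ch}(A,B,C)$ over $b\in B$ is a product of two intervals, but the extremes of $a+b+c$ over that fiber are \emph{not} continuous in $b$ (they jump as $b$ leaves $\min B$ or $\max B$), so ``the images glue up into a single interval'' requires either checking that consecutive fiber-images overlap at an endpoint, or, more cleanly, showing $\mathrm{Ch}(A,B,C)$ itself is connected and contains the two corner chains $(\min A,\min B,\min C)$ and $(\max A,\max B,\max C)$ --- both are chains by the same clip argument --- after which connectedness of the image finishes it. With those two repairs the proof is complete and shorter than the paper's.
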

\begin{proof}
    Let $A=[a,\overline{a}],~ B=[b,\overline{b}]$ and $C=[c,\overline{c}].$ We consider different cases: \\
    \underline{\textbf{Case1:}} $A,B$ and $C$ are mutually disjoint. Now, the following sub-cases are considered separately:
    \begin{itemize}
        \item$a\le \overline{a}<b\le \overline{b}<c\le \overline{c}.$\\
        Then $\Lambda(A,B)=\{(x,b):~x\in A\}\cup\{(\overline{a},x):x\in B\}$ and $\Lambda(B,C)=\{(x,c):~x\in B\}\cup\{(\overline{b},x):x\in C\}.$ This gives $A\oplus B=[a+b,\overline{a}+\overline{b}]$ and $B\oplus C=[b+c,\overline{b}+\overline{c}].$ Also, $$\text{Ch}(A,B,C)=\{(x,b,c): x\in A\}\cup\{(\overline{a},\overline{b},x): x\in C\}\cup\{(\overline{a},x,c):x\in B\}.$$
    Therefore, $A\oplus B\oplus C=[a+b+c,\overline{a}+b+c]\cup[\overline{a}+b+c,\overline{a}+\overline{b}+c]\cup[\overline{a}+\overline{b}+c,\overline{a}+\overline{b}+\overline{c}]=[a+b+c,\overline{a}+\overline{b}+\overline{c}].$ Assume $D=A\oplus B$ and $E=B\oplus C.$ Then 
    \begin{align*}
      &\Lambda(D,C)=\\
      &\begin{cases}
        \{(x,c):x\in D\}\cup\{(\overline{a}+\overline{b},x):x\in C\}; ~~ \text{if }~ \overline{a}+\overline{b}\le c,\\
       \{(x,c):x\in[a+b,c]\}\cup\{(x,x):x\in [c,\overline{a}+\overline{b}]\}\cup\{(\overline{a}+\overline{b},x):&x\in [\overline{a}+\overline{b},\overline{c}]\}; \\ &\text{if }~ \overline{a}+\overline{b}> c,
    \end{cases}  
    \end{align*}
and 
\begin{align*}
   &\Lambda(A,E)=\\
   &\begin{cases}
        \{(x,b+c):x\in A\}\cup\{(\overline{a},x):x\in E\}; ~~ \text{if }~ \overline{a}\le b+c,\\
       \{(x,b+c):x\in[a,b+c]\}\cup\{(x,x):x\in [b+c,\overline{a}]\}\cup\{(\overline{a},x):&x\in [\overline{a},\overline{b}+\overline{c}]\}; \\& \text{if }~ \overline{a}> b+c. 
    \end{cases} 
\end{align*}
As a consequence, we have 
$$(A\oplus B)\oplus C= A\oplus (B\oplus C)=A\oplus B\oplus C.$$
\item $b\le \overline{b}<a\le \overline{a}<c\le \overline{c}.$\\
Then $\Lambda(A,B)=\{(x,\overline{b}):~x\in A\}\cup\{(a,x):x\in B\}$ and $\Lambda(B,C)=\{(x,c):~x\in B\}\cup\{(\overline{b},x):x\in C\}.$ This gives $A\oplus B=[a+b,\overline{a}+\overline{b}]$ and $B\oplus C=[b+c,\overline{b}+\overline{c}].$ Also, $$\text{Ch}(A,B,C)=\{(x,\overline{b},y): x\in A, ~y\in C\}\cup\{(a,x,c):x\in B\}.$$
    Therefore, $A\oplus B\oplus C=[a+b+c,\overline{a}+\overline{b}+\overline{c}].$ Assume $D=A\oplus B$ and $E=B\oplus C.$ Then 
    \begin{align*}
      &\Lambda(D,C)=\\
      &\begin{cases}
        \{(x,c):x\in D\}\cup\{(\overline{a}+\overline{b},x):x\in C\}; ~~ \text{if }~ \overline{a}+\overline{b}\le c,\\
       \{(x,c):x\in[a+b,c]\}\cup\{(x,x):x\in [c,\overline{a}+\overline{b}]\}\cup\{(\overline{a}+\overline{b},x):&x\in [\overline{a}+\overline{b},\overline{c}]\}; \\& \text{if }~ \overline{a}+\overline{b}> c,
    \end{cases}  
    \end{align*}
and 
\begin{align*}
   &\Lambda(A,E)=\\
   &\begin{cases}
        \{(x,b+c):x\in A\}\cup\{(\overline{a},x):x\in E\}; ~~ \text{if }~ \overline{a}\le b+c,\\
       \{(x,b+c):x\in[a,b+c]\}\cup\{(x,x):x\in [b+c,\overline{a}]\}\cup\{(\overline{a},x):&x\in [\overline{a},\overline{b}+\overline{c}]\}; \\& \text{if }~ \overline{a}> b+c,\\
       \{(x,\overline{b}+\overline{c}):x\in A\}\cup\{(a,x):x\in E\}; ~~ \text{if }~ \overline{b}+\overline{c}\le a. 
    \end{cases} 
\end{align*}
As a consequence, we have 
$$(A\oplus B)\oplus C= A\oplus (B\oplus C)=A\oplus B\oplus C.$$
    \end{itemize} 
\underline{\textbf{Case2:}} Suppose $A\cap B\ne \emptyset,~ A\cap C=\emptyset$ and $B\cap C=\emptyset,$ i.e., $a<b<\overline{a}<\overline{b}<c\le\overline{c}.$\\
 Then $\Lambda(A,B)=\{(x,b):~x\in [a,b]\}\cup\{(x,x):x\in[b,\overline{a}]\}\cup\{(\overline{a},x):x\in [\overline{a},\overline{b}]\}$ and $\Lambda(B,C)=\{(x,c):~x\in B\}\cup\{(\overline{b},x):x\in C\}.$ This gives $A\oplus B=[a+b,\overline{a}+\overline{b}]$ and $B\oplus C=[b+c,\overline{b}+\overline{c}].$ Also, $\text{Ch}(A,B,C)=\{(x,b,c): x\in [a,b]\}\cup\{(x,x,c): x\in [b,\overline{a}]\}\cup\{(\overline{a},x,c):x\in [\overline{a},\overline{b}]\}\cup\{(\overline{a},\overline{b},x):x\in C\}.$
    Therefore, $A\oplus B\oplus C=[a+b+c,\overline{a}+\overline{b}+\overline{c}].$ Assume $D=A\oplus B$ and $E=B\oplus C.$ Then 
    \begin{align*}
      &\Lambda(D,C)=\\
      &\begin{cases}
        \{(x,c):x\in D\}\cup\{(\overline{a}+\overline{b},x):x\in C\}; ~~ \text{if }~ \overline{a}+\overline{b}\le c,\\
       \{(x,c):x\in[a+b,c]\}\cup\{(x,x):x\in [c,\overline{a}+\overline{b}]\}\cup\{(\overline{a}+\overline{b},x):&x\in [\overline{a}+\overline{b},\overline{c}]\}; \\
       &\text{if }~ \overline{a}+\overline{b}> c,
    \end{cases}  
    \end{align*}
and 
\begin{align*}
   &\Lambda(A,E)=\\
   &\begin{cases}
        \{(x,b+c):x\in A\}\cup\{(\overline{a},x):x\in E\}; ~~ \text{if }~ \overline{a}\le b+c,\\
       \{(x,b+c):x\in[a,b+c]\}\cup\{(x,x):x\in [b+c,\overline{a}]\}\cup\{(\overline{a},x):&x\in [\overline{a},\overline{b}+\overline{c}]\}; \\&\text{if }~ \overline{a}> b+c. 
    \end{cases} 
\end{align*}
As a consequence, we have 
$$(A\oplus B)\oplus C= A\oplus (B\oplus C)=A\oplus B\oplus C.$$
In a similar manner, the remaining cases are done. This completes the proof.

\end{proof}
\begin{remark}
    Let $A,B$ and $C$ be convex compact subsets of $\mathbb{R}^2.$ Then associativity need not hold in general. Consider $A=[0,1]\times[0,1], B=[10,100]\times[0,4]$ and $C=[90,95]\times[70,80].$ Then $\Lambda(A,B)=\{((a,\overline{a}),(10,\overline{a})):(a,\overline{a})\in A\}\cup\{((1,\overline{b}),(b,\overline{b})): (b,\overline{b})\in B, 0\le\overline{b}\le1\}\cup\{((1,1),(b,\overline{b})):(b,\overline{b})\in B, \overline{b}>1\}$ and $\Lambda(B,C)=\{((c,4),(c,\overline{c})):(c,\overline{c})\in C\}\cup\{((b,\overline{b}),(b,70)):(b,\overline{b})\in B, 90\le b\le 95\}\cup\{((b,\overline{b}),(95,70)):(b,\overline{b})\in B, 95<b\le100\}\cup\{((b,\overline{b}),(90,70)):(b,\overline{b})\in B,10\le b<90\}.$ So, $A\oplus B=([10,11)\times[0,2])\cup([11,101]\times[0,5])$ and $  B\oplus C=([100,180)\times[70,74])\cup([180,190]\times[70,84])\cup((190,195]\times[70,74]).$ Also, $\text{Ch}(A,B,C)=\{((a,\overline{a}),(10,\overline{a}),(90,70)):(a,\overline{a})\in A\}\cup\{((1,\overline{b}),(b,\overline{b}),(90,70)): 10\le b<90, 0\le\overline{b}\le1\}\cup\{((1,\overline{b}),(b,\overline{b}),(b,70)): 90\le b\le95, 0\le\overline{b}\le1\}\cup\{((1,\overline{b}),(b,\overline{b}),(95,70)): 95< b\le 100, 0\le\overline{b}\le1\}\cup\{((1,1),(b,\overline{b}),(90,70)): 10\le b<90,1<\overline{b}\le4\}\cup\{((1,1),(b,\overline{b}),(b,70)): 90\le b\le95,1<\overline{b}\le4\}\cup\{((1,1),(b,\overline{b}),(95,70)):  95< b\le 100,1<\overline{b}\le4\}.$ This gives $$A\oplus B\oplus C=([100,196]\times[70,72])\cup([101,196]\times(72,75]).$$
    Consider $D=A\oplus B.$ Then $\Lambda(D,C)=\{((c,5),(c,\overline{c})):(c,\overline{c})\in C\}\cup\{((d,\overline{d}),(90,70)):(d,\overline{d})\in D,10\le d<90\}\cup\{((d,\overline{d}),(d,70)):(d,\overline{d})\in D,90\le d\le95\}\cup\{((d,\overline{d}),(95,70)):(d,\overline{d})\in D,95< d\le101\}.$ As a result,
    $(A\oplus B)\oplus C=([180,190]\times[70,85])\cup([100,101)\times[70,72])\cup([101,180)\times[70,75])\cup([190,196]\times[70,75]).$
    Clearly, $A\oplus B\oplus C\ne (A\oplus B)\oplus C. $ Thus, associativity does not hold for convex compact sets in higher dimensions.
    \end{remark}
    These observations are utilized consistently throughout the article, and further insights concerning the Hausdorff distance are presented in the upcoming lemmas.
\begin{lemma}\label{11}
Let $A,B,C\in \mathcal{K}(\mathbb{R}).$ Then
$$\mathfrak{H}(A\oplus B, A\oplus C)=\mathfrak{H}(B, C).$$
    \end{lemma}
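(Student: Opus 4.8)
\emph{Proof plan.} The plan is to prove the identity by establishing the two inequalities $\mathfrak{H}(A\oplus B,A\oplus C)\le \mathfrak{H}(B,C)$ and $\mathfrak{H}(B,C)\le \mathfrak{H}(A\oplus B,A\oplus C)$ separately, in each direction unravelling the characterisation $\mathfrak{H}(X,Y)=\inf\{\epsilon>0:X\subseteq Y_\epsilon \text{ and } Y\subseteq X_\epsilon\}$. Write $\epsilon:=\mathfrak{H}(B,C)$. For the first inequality it is enough, by symmetry between $B$ and $C$, to show $A\oplus B\subseteq (A\oplus C)_\epsilon$: fix $z\in A\oplus B$, write $z=a+b$ with $(a,b)\in\Lambda(A,B)$, and produce a metric pair $(a',c')\in\Lambda(A,C)$ with $|z-(a'+c')|\le\epsilon$. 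As a sanity check, when $B=\{b\}$ and $C=\{c\}$ are singletons the claim is transparent, since then $A\oplus B=A+b$, $A\oplus C=A+c$, so $\mathfrak{H}(A\oplus B,A\oplus C)=\mathfrak{H}(A+b,A+c)=|b-c|=\mathfrak{H}(B,C)$ by translation invariance of $\mathfrak{H}$; the content of the lemma is that this persists for arbitrary $B$ and $C$.

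The guiding idea for $A\oplus B\subseteq(A\oplus C)_\epsilon$ is to keep the $A$-summand fixed and move only the $B$-summand to a nearby point of $C$. Since $(a,b)\in\Lambda(A,B)$ means $b\in\Lambda_B(a)$ or $a\in\Lambda_A(b)$, I would treat the two cases in turn. If $b\in\Lambda_B(a)$, so $|a-b|=D(a,B)$, take $c'$ to be a nearest point of $C$ to $a$; then $(a,c')\in\Lambda(A,C)$ automatically and $|z-(a+c')|=|b-c'|$. To bound $|b-c'|$ one combines the standard estimate $|D(a,B)-D(a,C)|\le\mathfrak{H}(B,C)=\epsilon$ with the one-dimensional geometry: the hypotheses that $b$ is a nearest point of $B$ to $a$ and $c'$ a nearest point of $C$ to $a$ force $B$ and $C$ each to omit an explicit open interval around $a$, and this confines $b$ and $c'$ to the same side of $a$ up to the slack $\epsilon$, so $|b-c'|\le\epsilon$. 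If instead $a\in\Lambda_A(b)$, pick $c'\in C$ with $|b-c'|\le\epsilon$ (possible since $B\subseteq C_\epsilon$) and argue that $a$ remains a nearest point of $A$ to $c'$, or, should that fail, that $c'$ may be replaced by the nearest point of $C$ to $a$ on the appropriate side without losing $|b-c'|\le\epsilon$; either way $(a,c')\in\Lambda(A,C)$ and $|z-(a+c')|\le\epsilon$.

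For the reverse inequality put $\delta:=\mathfrak{H}(A\oplus B,A\oplus C)$; by symmetry it suffices to show $B\subseteq C_\delta$. Given $b\in B$, take $a\in\Lambda_A(b)$, so $(a,b)\in\Lambda(A,B)$ and $a+b\in A\oplus B$; then there is $a'+c'\in A\oplus C$ with $|(a+b)-(a'+c')|\le\delta$, and since $b-c'=((a+b)-(a'+c'))-(a-a')$ the desired bound $D(b,C)\le|b-c'|\le\delta$ would follow once the $A$-summand is shown not to drift, i.e.\ once $|a-a'|$ is controlled. I expect this cancellation-type step to be the main obstacle: unlike the Minkowski combination, the metric sum is not additive on support functions, so no soft cancellation argument is available, and the position of $a'$ must be pinned down directly. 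Concretely I would analyse the relative positions of $A$, $B$ and $C$ on the line, in the spirit of the case analysis carried out for the associativity Proposition above; in the relevant configurations the nearest-point relations defining $a\in\Lambda_A(b)$ and $(a',c')\in\Lambda(A,C)$ should confine $a'$ near $a$, and obtaining a bound sharp enough to yield $|b-c'|\le\delta$ — rather than merely $|b-c'|\le 2\delta$ — by a judicious choice of the point of $A\oplus C$ realising the distance to $a+b$ is where the genuine work lies. Combining the two inequalities then gives $\mathfrak{H}(A\oplus B,A\oplus C)=\mathfrak{H}(B,C)$.
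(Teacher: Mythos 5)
Your overall route --- unpacking $\mathfrak{H}$ via metric pairs, keeping the $A$-summand fixed and moving only the $B$/$C$-summand --- is essentially the paper's: the paper computes $D(a+b,A\oplus C)$ directly, asserts that the minimum over $A\oplus C$ ``is obtained when $a=a'$'', and then produces a metric pair $(a,c)\in\Lambda(A,C)$ with $|b-c|=D(b,C)$. You split the identity into two inequalities and, to your credit, flag exactly the two claims the paper dismisses with ``clearly''. But as written you close neither of them. In the direction $\mathfrak{H}(A\oplus B,A\oplus C)\le\mathfrak{H}(B,C)$, the case $a\in\Lambda_A(b)$ ends with ``argue that $a$ remains a nearest point of $A$ to $c'$, or, should that fail, that $c'$ may be replaced \dots either way $(a,c')\in\Lambda(A,C)$'' --- that disjunction is precisely the delicate point and is asserted, not proved; what is actually needed is that for every $(a,b)\in\Lambda(A,B)$ there is \emph{some} $(a',c')\in\Lambda(A,C)$ (you are free to move $a$ too) with $|(a+b)-(a'+c')|\le\mathfrak{H}(B,C)$. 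In the reverse direction you explicitly leave the key estimate open (``where the genuine work lies''): one must show that for a maximizing $b^*$ with $D(b^*,C)=\mathfrak{H}(B,C)$ and $a^*\in\Lambda_A(b^*)$, \emph{every} $(a',c')\in\Lambda(A,C)$ satisfies $|(a^*+b^*)-(a'+c')|\ge D(b^*,C)$, i.e.\ that no cancellation between $a^*-a'$ and $b^*-c'$ can occur. Until both steps are supplied, the proposal is a plan rather than a proof.

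One observation in your favour: the pointwise identity implicit in the paper's argument, namely $D(a+b,A\oplus C)=D(b,C)$ for $(a,b)\in\Lambda(A,B)$, is false. Take $A=\{0,1\}$, $B=\{0\}$, $C=\{0.6\}$; then $A\oplus B=\{0,1\}$ and $A\oplus C=\{0.6,1.6\}$, and the point $1+0\in A\oplus B$ has $D(1,A\oplus C)=0.4<0.6=D(0,C)$, the nearest point of $A\oplus C$ being $0+0.6$, i.e.\ with $a'\ne a$. So a correct proof must argue at the level of the suprema rather than pointwise --- exactly the structure your two-inequality framework imposes --- but that framework still has to be filled in, and the hard content of the lemma lives precisely in the two steps you deferred.
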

\begin{proof}
The Hausdorff distance is given as follows:
\begin{align*}
    \mathfrak{H}(&A\oplus B, \ A\oplus C)\\&=\max\Big\{\max_{a+b\in A\oplus B}D(a+b,A\oplus C),\max_{a'+c\in A\oplus C}D(a'+c,A\oplus B)\Big\}.
\end{align*}
   Observing the first term, we get
    \begin{equation*}
        \begin{aligned}
            D(a+b,A\oplus C)&=\min_{a'+c\in A\oplus C}|(a+b)-(a'+c)|\\
            &=\min_{a'+c\in A\oplus C}|(a-a')+(b-c)|.\\
        \end{aligned}
    \end{equation*}
    Clearly, minimum is obtained when $a=a'.$ Also, $(a,b)\in \Lambda(A,B)$ implies either $a\in \Lambda_A(b)$ or $b\in \Lambda_B(a).$ In view of Note \ref{n1}, there exists $(a,c)\in\Lambda(A,C)$ such that
    \begin{align*}
        D(a+b,A\oplus C)
            =\min_{c\in C, a+c\in A\oplus C}|b-c|=D(b,C).
    \end{align*}
    Similarly, $D(a'+c,A\oplus B)=\min_{b\in B, a+b\in A\oplus B}|c-b|=D(c,B).$ Thus,
   \begin{equation*}
        \begin{aligned}
             \mathfrak{H}(A\oplus B, A\oplus C)&=\max\Big\{\max_{a+b\in A\oplus B}D(b,C),\max_{a'+c\in A\oplus C}D(c,B)\Big\}\\
             &=\max\Big\{\max_{b\in B}D(b,C),\max_{c\in C}D(c,B)\Big\}\\
             &=\mathfrak{H}(B,C),
        \end{aligned}
    \end{equation*}
    establishing the assertion.
\end{proof}
\begin{lemma}\label{le1}
Let $A,B,C,D\in\mathcal{K}(\mathbb{R}).$ Then 
$$\mathfrak{H}(A\oplus B, C\oplus D)\le\mathfrak{H}(A, C)+\mathfrak{H}(B,D).$$
\end{lemma}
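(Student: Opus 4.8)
The plan is to reduce the inequality to Lemma \ref{11} by inserting an intermediate set and applying the triangle inequality for the Hausdorff metric. Since $\mathfrak{H}$ is a metric on $\mathcal{K}(\mathbb{R})$, I would begin with
$$\mathfrak{H}(A\oplus B,\ C\oplus D)\le \mathfrak{H}(A\oplus B,\ A\oplus D)+\mathfrak{H}(A\oplus D,\ C\oplus D).$$
The first term on the right is of exactly the form treated in Lemma \ref{11} (common summand $A$ on the left), so it equals $\mathfrak{H}(B,D)$.

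For the second term I would use the commutativity of the metric-sum. From the symmetry of metric pairs, namely $(a,d)\in\Lambda(A,D)$ if and only if $(d,a)\in\Lambda(D,A)$, one gets $A\oplus D=D\oplus A$ and $C\oplus D=D\oplus C$; hence $\mathfrak{H}(A\oplus D,\ C\oplus D)=\mathfrak{H}(D\oplus A,\ D\oplus C)$, which by Lemma \ref{11} (now with common summand $D$ on the left) equals $\mathfrak{H}(A,C)$. Combining the two estimates gives
$$\mathfrak{H}(A\oplus B,\ C\oplus D)\le \mathfrak{H}(B,D)+\mathfrak{H}(A,C),$$
which is the asserted bound.

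This is essentially a two-line deduction, so I do not anticipate a genuine obstacle; the only subtlety is that Lemma \ref{11} is stated with the common summand on the left, so commutativity of $\oplus$ is needed for the second application. If one prefers to avoid invoking commutativity, the same computation as in the proof of Lemma \ref{11} — matching $b=d$ in the minimization instead of $a=a'$ — yields directly the right-hand analogue $\mathfrak{H}(A\oplus D,\ C\oplus D)=\mathfrak{H}(A,C)$, and the argument proceeds verbatim.
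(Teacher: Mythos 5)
Your argument is correct and is exactly the route the paper intends: its proof of this lemma consists solely of the remark that it ``follows naturally using the triangle inequality,'' and your insertion of the intermediate set $A\oplus D$ together with two applications of Lemma \ref{11} (using commutativity of $\oplus$, which holds by Note \ref{note1}) is the natural way to fill that in.
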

\begin{proof}
    The proof follows naturally using the triangle inequality.
\end{proof}
\begin{lemma}
    If $A_1,A_2,A_3,B_1,B_2,B_3\in\mathcal{K}_c(\mathbb{R}),$ then 
    $$\mathfrak{H}(A_1\oplus A_2\oplus A_3, B_1\oplus B_2\oplus B_3)\le \mathfrak{H}(A_1,B_1)+\mathfrak{H}(A_2,B_2)+\mathfrak{H}(A_3,B_3).$$
\end{lemma}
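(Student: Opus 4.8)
The plan is to reduce the three-fold estimate to the two-fold estimate of Lemma~\ref{le1}, exploiting that associativity (the Proposition) holds for convex compact subsets of $\mathbb{R}$. First I would write $A_1\oplus A_2\oplus A_3=(A_1\oplus A_2)\oplus A_3$ and $B_1\oplus B_2\oplus B_3=(B_1\oplus B_2)\oplus B_3$, which is legitimate precisely because all six sets lie in $\mathcal{K}_c(\mathbb{R})$ and hence so do the pairwise metric sums $A_1\oplus A_2$ and $B_1\oplus B_2$ (the metric sum of convex compact sets in $\mathbb{R}$ is again convex compact — this is implicit in the computations of the Proposition, where every $A\oplus B$ came out to be an interval).

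Next I would apply Lemma~\ref{le1} with the four convex compact sets $A_1\oplus A_2$, $A_3$, $B_1\oplus B_2$, $B_3$ to obtain
\begin{equation*}
\mathfrak{H}\big((A_1\oplus A_2)\oplus A_3,\ (B_1\oplus B_2)\oplus B_3\big)\le \mathfrak{H}(A_1\oplus A_2,\ B_1\oplus B_2)+\mathfrak{H}(A_3,B_3).
\end{equation*}
Then I would apply Lemma~\ref{le1} once more to the first term on the right, giving $\mathfrak{H}(A_1\oplus A_2,\ B_1\oplus B_2)\le \mathfrak{H}(A_1,B_1)+\mathfrak{H}(A_2,B_2)$. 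Chaining the two inequalities yields exactly the claimed bound.

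The main point to be careful about — really the only substantive step — is justifying that the metric sum preserves convexity and compactness in $\mathbb{R}$, so that the Proposition on associativity is actually applicable and so that Lemma~\ref{le1} can be invoked with $A_1\oplus A_2$ and $B_1\oplus B_2$ as legitimate members of $\mathcal{K}_c(\mathbb{R})$. Compactness is immediate since $\mathrm{Ch}(A_1,A_2)$ is compact (a closed subset of the compact product $A_1\times A_2$) and the summation map is continuous; for convexity one uses that for intervals $A_1=[a,\overline a]$, $A_2=[b,\overline b]$ the metric pair set always produces $A_1\oplus A_2=[a+b,\overline a+\overline b]$, as exhibited case-by-case in the proof of the Proposition. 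Once this is in hand, the rest is a two-line diagram chase; I do not anticipate any further obstacle, and in fact the same argument extends verbatim to any finite number of convex compact summands in $\mathbb{R}$ by induction on the number of summands.
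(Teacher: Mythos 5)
Your proof is correct and takes essentially the same route as the paper: both arguments invoke the associativity of the metric sum for convex compact subsets of $\mathbb{R}$ and then reduce the three-fold estimate to the two-fold Lemma \ref{le1}. The only cosmetic difference is that the paper inserts the intermediate term $A_1\oplus B_2\oplus B_3$ and uses the cancellation identity $\mathfrak{H}(A\oplus B,A\oplus C)=\mathfrak{H}(B,C)$ of Lemma \ref{11}, whereas you apply Lemma \ref{le1} twice; your extra care in checking that $A_1\oplus A_2$ and $B_1\oplus B_2$ are again convex compact is sound but strictly needed only for the associativity step, since Lemma \ref{le1} holds for arbitrary members of $\mathcal{K}(\mathbb{R})$.
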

\begin{proof}
    As $A_1,A_2,A_3\in\mathcal{K}_c(\mathbb{R}),$ associativity holds, that is,
    $$A_1\oplus A_2\oplus A_3=(A_1\oplus A_2)\oplus A_3=A_1\oplus (A_2\oplus A_3).$$
Consequently, 
\begin{align*}
    &\mathfrak{H}(A_1\oplus A_2\oplus A_3, B_1\oplus B_2\oplus B_3)\\\le&\mathfrak{H}(A_1\oplus A_2\oplus A_3, A_1\oplus B_2\oplus B_3)+\mathfrak{H}(A_1\oplus B_2\oplus B_3, B_1\oplus B_2\oplus B_3)\\
    =&\mathfrak{H}(A_1\oplus (A_2\oplus A_3), A_1\oplus( B_2\oplus B_3))+\mathfrak{H}(A_1\oplus (B_2\oplus B_3), B_1\oplus (B_2\oplus B_3))\\
    =&\mathfrak{H}(A_2\oplus A_3, B_2\oplus B_3)+\mathfrak{H}(A_1, B_1)\\
    \le&\mathfrak{H}(A_2, B_2)+\mathfrak{H}(A_3, B_3)+\mathfrak{H}(A_1, B_1),
\end{align*}
concluding the claim.
\end{proof}
On the same lines, one can conclude the following.
\begin{corollary}\label{cor1}
For any $n\in\mathbb{N}$, assume $A_1,A_2,\ldots,A_n,B_1,B_2,\ldots,B_n\in\mathcal{K}_c(\mathbb{R}),$ then
$$\mathfrak{H}(A_1\oplus A_2\oplus\cdots\oplus A_n, B_1\oplus B_2\oplus\cdots\oplus B_n)\le \mathfrak{H}(A_1,B_1)+\mathfrak{H}(A_2,B_2)+\cdots+\mathfrak{H}(A_n,B_n).$$
\end{corollary}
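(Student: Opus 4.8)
The plan is a routine induction on $n$, reproducing the pattern of the preceding three-summand lemma one step at a time: the triangle inequality for $\mathfrak{H}$ does the telescoping, and Lemma \ref{11} cancels a common summand at each step.

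Before running the induction I would record the structural fact that makes the telescoping legitimate. The case analysis in the Proposition shows that the metric sum of two intervals is again an interval, so $\mathcal{K}_c(\mathbb{R})$ is closed under $\oplus$; together with the three-term associativity of the Proposition and the commutativity of $\oplus$, this makes $(\mathcal{K}_c(\mathbb{R}),\oplus)$ a commutative semigroup. Hence, for $A_1,\ldots,A_n\in\mathcal{K}_c(\mathbb{R})$, the $n$-fold metric sum is independent of bracketing and of the order of the summands; in particular $A_1\oplus\cdots\oplus A_n=(A_1\oplus\cdots\oplus A_{n-1})\oplus A_n$ and $A_1\oplus\cdots\oplus A_{n-1}\in\mathcal{K}_c(\mathbb{R})$, and likewise for the $B_j$'s.

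For the inductive step, assume the estimate for $n-1$ summands and put $A'=A_1\oplus\cdots\oplus A_{n-1}$, $B'=B_1\oplus\cdots\oplus B_{n-1}$, both in $\mathcal{K}_c(\mathbb{R})$. Then, using the regrouping above and the triangle inequality,
\begin{align*}
\mathfrak{H}(A_1\oplus\cdots\oplus A_n,\ B_1\oplus\cdots\oplus B_n)
&=\mathfrak{H}(A'\oplus A_n,\ B'\oplus B_n)\\
&\le\mathfrak{H}(A'\oplus A_n,\ A'\oplus B_n)+\mathfrak{H}(A'\oplus B_n,\ B'\oplus B_n).
\end{align*}
By Lemma \ref{11} the first term equals $\mathfrak{H}(A_n,B_n)$; for the second, commutativity rewrites $A'\oplus B_n=B_n\oplus A'$ and $B'\oplus B_n=B_n\oplus B'$, so Lemma \ref{11} gives $\mathfrak{H}(A'\oplus B_n,\ B'\oplus B_n)=\mathfrak{H}(A',B')$, which by the induction hypothesis is at most $\mathfrak{H}(A_1,B_1)+\cdots+\mathfrak{H}(A_{n-1},B_{n-1})$. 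Adding the two contributions closes the induction, the base case $n=2$ being exactly Lemma \ref{le1}.

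The only step that is not purely formal is the semigroup remark at the start: one must know that the $n$-fold metric sum of convex compact sets is unambiguous and can be split off as $(\,\cdot\,)\oplus A_n$ with the first factor still convex. This is precisely the content of the Proposition, promoted from three terms to $n$ terms by the standard induction on parenthesizations together with closure of $\mathcal{K}_c(\mathbb{R})$ under $\oplus$; once it is in hand, the inductive step introduces nothing new beyond two invocations of Lemma \ref{11} and the triangle inequality.
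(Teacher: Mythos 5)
Your proof is correct and follows essentially the same route as the paper, whose entire justification for the corollary is the remark that it follows ``on the same lines'' as the three-summand lemma, i.e.\ by iterating the telescoping argument (triangle inequality plus Lemma \ref{11}) that relies on associativity of $\oplus$ for convex compact sets. The only differences are cosmetic: you peel off the last summand rather than the first, and you make explicit the point --- left implicit in the paper --- that the $n$-fold metric sum of elements of $\mathcal{K}_c(\mathbb{R})$ is unambiguous and again lies in $\mathcal{K}_c(\mathbb{R})$.
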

In view of \cite[Lemma 8]{PSV}, we conclude that 
\begin{lemma}\label{12}
    Let $A,B\in \mathcal{K}(\mathbb{R}),$ and $\alpha,\beta\in\mathbb{R}.$ Then 
    \begin{align*}
      &\mathfrak{H}(\alpha A,\beta A)\le |A||\alpha-\beta|,\\ 
   \text{ and} \hspace{0.5cm}  &\mathfrak{H}(\alpha A,\alpha B)\le |\alpha|\mathfrak{H}(A,B).
    \end{align*}
    where $|A|=\sup_{x,y\in A}|x-y|$ is the diameter of $A$ and $\alpha A=\{\alpha a:~a\in A\}.$
\end{lemma}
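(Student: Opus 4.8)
The plan is to obtain both estimates directly from the definition of the Hausdorff distance together with the homogeneity of the absolute value on $\mathbb{R}$; each is the specialization to the present one‑dimensional, single‑set setting of the more general estimate \cite[Lemma 8]{PSV}, so one may also simply quote that result.

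For the second inequality I would first record the scaling identity $D(\alpha x,\alpha B)=|\alpha|\,D(x,B)$, valid for every $x\in\mathbb{R}$, every $B\in\mathcal{K}(\mathbb{R})$ and every $\alpha\in\mathbb{R}$: it is immediate from $|\alpha x-\alpha b|=|\alpha|\,|x-b|$ and the fact that multiplication by the nonnegative scalar $|\alpha|$ commutes with $\min_{b\in B}$, the case $\alpha=0$ being trivial since then $\alpha A=\{0\}=\alpha B$. Plugging this into
$$\mathfrak{H}(\alpha A,\alpha B)=\max\Big\{\max_{a\in A}D(\alpha a,\alpha B),\ \max_{b\in B}D(\alpha b,\alpha A)\Big\}$$
pulls $|\alpha|$ out of both inner maxima and leaves exactly $|\alpha|\,\mathfrak{H}(A,B)$; in fact equality holds.

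For the first inequality I would fix $a\in A$ and bound $D(\alpha a,\beta A)$ from above by testing $\alpha a$ against a single, suitably chosen point $\beta a'\in\beta A$. The crude choice $a'=a$ already gives, via $\alpha a-\beta a=(\alpha-\beta)a$, the bound $D(\alpha a,\beta A)\le|\alpha-\beta|\sup_{x\in A}|x|$, and hence $\mathfrak{H}(\alpha A,\beta A)\le|\alpha-\beta|\sup_{x\in A}|x|$ after taking the maximum over $a$ and symmetrizing in the two arguments of $\mathfrak{H}$. Replacing $\sup_{x\in A}|x|$ by the diameter $|A|$ — which is what the statement asserts — is the one point that genuinely requires care, and it is exactly here that \cite[Lemma 8]{PSV} is invoked: one must choose the comparison point $a'$ so that the coefficient gap $|\alpha-\beta|$ gets multiplied by a displacement measured \emph{within} $A$ (hence at most $|A|$) rather than by the distance from a point of $A$ to the origin. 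I expect this choice of $a'$ to be the main obstacle to a self‑contained argument; once it is made, the remainder is a one‑line triangle‑inequality estimate, and neither Lemma \ref{11} nor Lemma \ref{le1} is needed for this step.
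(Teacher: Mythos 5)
The paper offers no argument for this lemma at all --- it simply writes ``In view of \cite[Lemma 8]{PSV}, we conclude that'' --- so there is no proof of record to compare yours against. Your treatment of the second inequality is complete and correct; as you note, it is in fact an equality, since $D(\alpha a,\alpha B)=|\alpha|\,D(a,B)$ pulls $|\alpha|$ out of both outer maxima.

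The real issue is the step you defer. You correctly sense that upgrading $\sup_{x\in A}|x|$ to the diameter $|A|$ ``genuinely requires care,'' but the clever choice of comparison point $a'$ you hope \cite[Lemma 8]{PSV} supplies does not exist: with $|A|=\sup_{x,y\in A}|x-y|$ the first inequality is simply false. Take $A=\{1\}$, $\alpha=1$, $\beta=0$; then $\mathfrak{H}(\alpha A,\beta A)=\mathfrak{H}(\{1\},\{0\})=1$, while $|A|\,|\alpha-\beta|=0$. More generally, translating $A$ far from the origin inflates $\mathfrak{H}(\alpha A,\beta A)$ without changing the diameter, so no bound in terms of $|A|$ alone can hold. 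The correct constant is exactly the one your ``crude'' choice $a'=a$ produces, namely $\sup_{a\in A}|a|=\mathfrak{H}(A,\{0\})$; that is the form in which results of this type are stated in \cite{PSV}, and the present paper has evidently mis-transcribed the norm of the set as its diameter. In other words, your first paragraph together with the crude estimate already constitutes a complete proof of the \emph{correct} statement, and the refinement you were searching for should be abandoned rather than found. The only downstream use of the first inequality is in Lemma \ref{poly}, where replacing $|A_i|$ by $\sup_{a\in A_i}|a|$ leaves the Lipschitz conclusion intact, with a larger but still finite constant.
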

Let $+$ denote the Minkowski sum of two sets. It is observed that $$\mathfrak{H}(A\oplus B, C\oplus D)\le\mathfrak{H}(A+ B, C+ D)$$
need not hold for all $A,B,C,D\in \mathcal{K}(\mathbb{R}).$ Here is an example to illustrate the scenario:\\
Consider $A=B=\frac{1}{2}\mathfrak{C},$ where $\mathfrak{C}$ a middle third Cantor set on $[0,1],$ and $C=D=[0,\frac{1}{2}].$ Then $A\oplus B=\mathfrak{C}, ~C\oplus D=[0,1].$ Also, $A+B=[0,1]$ and $C+D=[0,1].$ Clearly, $\mathfrak{H}(A+ B, C+ D)=0,$ and
$$\mathfrak{H}(\mathfrak{C},[0,1])=\max\{\max_{c\in\mathfrak{C}}D(c,[0,1]),\max_{a\in[0,1]}D(a,\mathfrak{C})\}.$$
So, $D(c,[0,1])=0$ for all $c\in\mathfrak{C}$ and
$$D(a,\mathfrak{C})= \left\{\begin{array}{rcl}0, & \mbox{if} \ a\in\mathfrak{C}, \\ a-\frac{1}{3}, & \mbox{if} \ a\in\Big(\frac{1}{3},\frac{1}{2}\Big], \\ \frac{2}{3}-a
, & \mbox{if} \ a\in\Big(\frac{1}{2},\frac{2}{3}\Big),\\ a-\frac{1}{3^2}, & \mbox{if} \ a\in\Big(\frac{1}{3^2},\frac{1}{6}\Big],\\ \frac{2}{3^2}-a, & \mbox{if} \ a\in\Big(\frac{1}{6},\frac{2}{3^2}\Big), \text{ and so on.} \end{array}\right. \ $$
Thus, $\max_{a\in[0,1]}D(a,\mathfrak{C})=\frac{1}{6},$ and $\mathfrak{H}(\mathfrak{C},[0,1])=\frac{1}{6}.$\par

Let us recall some definitions and preliminary statements that will form the foundation of this article. Consider $\mathbb{R}_+$ denote the set of all non-negative real numbers and $(X,\rho)$ be a metric space.
\begin{definition}\cite{Fal,Rogers}
Let $A\subseteq X$ and $r\in \mathbb{R}_+.$ The $r$-dimensional Hausdorff measure of $A$ is defined as $$H^r(A):=\lim_{\delta\to 0^+}H_{\delta}^r(A),$$
where $H_{\delta}^r(A):=\inf\{\sum_{i=1}^{\infty}|U_i|^r:~ A\subseteq\bigcup_{i=1}^\infty U_i, ~|U_i|\le\delta \}$ and $|U_i|=\sup_{x,y\in U_i}\rho(x,y).$ The Hausdorff dimension of $A$ is defined as $$\dim_H(A):=\inf\{r\geq0:~H^r(A)=0\}.$$
\end{definition}
The other fractal dimensions, such as box dimension $(\dim_B(.))$ \cite{Fal}, Assouad dimension $(\dim_A(.))$ \cite{Fraser} and packing dimension $\dim_P(.)$ \cite{Fal} are also defined in the literature. We direct the reader to \cite{Fal,Fraser} for a detailed comparative view.
\begin{definition}\cite{Boyd,Rakotch}
		 A map $h$ on $X$ is said to be a Rakotch contraction if $h$ is a $\phi$-contraction, that is, there exists a map $\phi:\mathbb{R}_+\to \mathbb{R}_+$ such that
  $$\rho(h(x),h(y))\leq \phi(\rho(x,y)),~~\forall~ x,y\in X,$$
 where the function $\frac{\phi(x)}{x}<1$ and decreasing, for all $x>0.$    
	\end{definition}
 \begin{theorem}\cite{Jack,JI}\label{n1}
	  A map $h$ on $X$ is a Rakotch contraction if and only if there exists an increasing function $\phi:\mathbb{R}_+\to \mathbb{R}_+$ such that $h$ is $\phi$-contraction and for all $x>0,$ the function $\frac{\phi(x)}{x}<1$ and decreasing.		
	\end{theorem}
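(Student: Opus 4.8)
The plan is to treat the two implications separately, since they differ completely in difficulty. The implication from right to left I would dispose of in one line: an increasing $\phi:\mathbb{R}_+\to\mathbb{R}_+$ with $h$ a $\phi$-contraction and $\frac{\phi(x)}{x}<1$ and decreasing on $(0,\infty)$ is in particular \emph{a} function with exactly these properties, which is precisely the definition of a Rakotch contraction.

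For the forward implication, suppose $h$ is a Rakotch contraction with gauge $\phi$, so $\rho(h(x),h(y))\le\phi(\rho(x,y))$ for all $x,y\in X$ and $\alpha(t):=\frac{\phi(t)}{t}$ is decreasing on $(0,\infty)$ with $\alpha(t)<1$ there. The idea is to replace $\phi$ by its running integral
\begin{equation*}
\psi(t):=\int_0^t\alpha(s)\,ds=\int_0^t\frac{\phi(s)}{s}\,ds\quad(t>0),\qquad\psi(0):=0 ,
\end{equation*}
which is legitimate since $\alpha$ is nonnegative, monotone and bounded by $1$, so the integral exists and $0\le\psi(t)\le t<\infty$. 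I would then check, in order: (i) $\psi\ge\phi$ on $\mathbb{R}_+$, so $h$ is also a $\psi$-contraction — indeed monotonicity of $\alpha$ gives $\psi(t)=\int_0^t\alpha(s)\,ds\ge t\,\alpha(t)=\phi(t)$, hence $\rho(h(x),h(y))\le\phi(\rho(x,y))\le\psi(\rho(x,y))$; (ii) $\psi$ is increasing on $\mathbb{R}_+$, which is immediate because $\psi$ is an indefinite integral of the nonnegative function $\alpha$; and (iii) $\frac{\psi(t)}{t}<1$ and $t\mapsto\frac{\psi(t)}{t}$ is decreasing on $(0,\infty)$. For the first half of (iii), $\alpha<1$ on $(0,t)$ yields $\frac{\psi(t)}{t}=\frac1t\int_0^t\alpha(s)\,ds<1$; for the second half, I would invoke the standard fact that the running average of a decreasing function is decreasing, proved for $0<t_1<t_2$ by combining $\int_{t_1}^{t_2}\alpha\le(t_2-t_1)\alpha(t_1)$ with $t_1\alpha(t_1)\le\int_0^{t_1}\alpha$ to obtain $t_1\int_{t_1}^{t_2}\alpha\le(t_2-t_1)\int_0^{t_1}\alpha$, which rearranges to $\frac{\psi(t_2)}{t_2}\le\frac{\psi(t_1)}{t_1}$.

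I expect the only genuine friction to be a matter of convention rather than of mathematics: whether ``increasing'' and ``decreasing'' are read weakly or strictly, together with the borderline case in which $\alpha$ vanishes on a tail $[t_0,\infty)$, making $\psi$ eventually constant. Under the strict reading this situation cannot occur for the original gauge — a strictly decreasing $\alpha\ge 0$ is everywhere positive, else it would take negative values — and this forces $\psi$ to be strictly increasing and, refining the estimates above with $\int_{t_1}^{t_2}\alpha<(t_2-t_1)\alpha(t_1)$, forces $\frac{\psi(t)}{t}$ to be strictly decreasing; under the weak reading, nothing beyond the above is needed. As a backup construction one could instead take the monotone envelope $\psi(t):=\sup_{0<s\le t}\phi(s)$, which is manifestly nondecreasing and dominates $\phi$; establishing $\frac{\psi(t)}{t}<1$ and its monotonicity then amounts to splitting the supremum at $t$ and comparing $\frac{s}{t'}\alpha(s)$ against $\frac{s}{t}\alpha(s)$ and against $\alpha(t)$ — slightly more delicate, especially for strict decrease, but yielding the same conclusion.
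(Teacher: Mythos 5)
The paper does not prove this statement at all: it is imported verbatim from the cited works of Jachymski and Jachymski--J\'o\'zwik, so there is no in-paper argument to compare against. Your proposal is, however, a correct self-contained proof. The backward implication is indeed immediate from the definition of a Rakotch contraction given in the paper (the gauge there is not required to be increasing, so an increasing gauge is a fortiori admissible). For the forward implication, your replacement of the gauge by the running integral $\psi(t)=\int_0^t \alpha(s)\,ds$ with $\alpha(s)=\phi(s)/s$ is legitimate: $\alpha$ is nonnegative, monotone and bounded by $1$, so $\psi$ is finite with $0\le\psi(t)\le t$; the inequality $\psi(t)\ge t\,\alpha(t)=\phi(t)$ from the monotonicity of $\alpha$ shows $h$ is a $\psi$-contraction; $\psi$ is increasing; $\tfrac{1}{t}\int_0^t\alpha<1$ gives $\psi(t)/t<1$; and the running average of a decreasing function is decreasing, exactly by the rearrangement you write down. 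Your handling of the strict-versus-weak monotonicity convention is also the right thing to flag: under the strict reading a strictly decreasing $\alpha\ge 0$ is everywhere positive, which upgrades both the monotonicity of $\psi$ and the decrease of $\psi(t)/t$ to strict. The only cosmetic remark is that the more common construction in the literature (and the one you list as a backup) is the monotone envelope $\sup_{0<s\le t}\phi(s)$; your integral average is arguably cleaner here precisely because the decrease of $\psi(t)/t$ comes out of a one-line Chebyshev-type rearrangement rather than a case split of the supremum.
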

  \begin{theorem}\cite{Rakotch}\label{RK}
		Let $(X,\rho)$ be a complete metric space and $h:X\to X$ be a Rakotch contraction. Then there exists unique $x_*\in X$ such that
  $$h(x_*)=x_*.$$
	\end{theorem}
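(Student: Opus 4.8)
The plan is to prove Rakotch's theorem by the classical Picard iteration argument, paying attention to the two spots where the weakened contractivity forces more work than in the Banach case. First I would invoke Theorem~\ref{n1} to reduce to the situation in which the gauge $\phi$ is \emph{increasing}, $h$ is a $\phi$-contraction, and $t\mapsto \phi(t)/t$ is decreasing and strictly below $1$ on $(0,\infty)$; in particular $\phi(t)<t$ for every $t>0$ and $0\le\phi(t)<t\to 0$ as $t\to 0^+$, so $h$ is continuous. Uniqueness is then immediate: if $h(x_*)=x_*$ and $h(y_*)=y_*$ with $x_*\ne y_*$, then $\rho(x_*,y_*)=\rho(h(x_*),h(y_*))\le\phi(\rho(x_*,y_*))<\rho(x_*,y_*)$, a contradiction, so there is at most one fixed point.

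For existence I would fix any $x_0\in X$, set $x_{n+1}:=h(x_n)$ and $c_n:=\rho(x_n,x_{n+1})$, and observe that $c_{n+1}=\rho(h(x_n),h(x_{n+1}))\le\phi(c_n)\le c_n$, so $(c_n)$ is non-increasing and converges to some $c\ge 0$. The first delicate point is to show $c=0$: if $c>0$, then since $c_n\ge c$ and $t\mapsto\phi(t)/t$ is decreasing one gets $c_{n+1}\le\phi(c_n)=\frac{\phi(c_n)}{c_n}\,c_n\le\frac{\phi(c)}{c}\,c_n$, and with $q:=\phi(c)/c<1$ this yields $c_n\le q^n c_0\to 0$, contradicting $c_n\ge c>0$. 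Hence $c_n\to 0$. (Note this step genuinely uses the monotone-ratio hypothesis, not merely $\phi(t)<t$.)

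The main obstacle is that $c_n\to 0$ does not by itself make $(x_n)$ Cauchy, since the $c_n$ need not be summable; here I would run the standard ``$\varepsilon-\phi(\varepsilon)$'' trap. Given $\varepsilon>0$ we have $\varepsilon-\phi(\varepsilon)>0$, so pick $N$ with $c_N<\varepsilon-\phi(\varepsilon)$; then by induction on $k$, using the triangle inequality and the monotonicity of $\phi$, $\rho(x_N,x_{N+k})\le c_N+\phi\big(\rho(x_N,x_{N+k-1})\big)\le c_N+\phi(\varepsilon)<\varepsilon$. Thus $(x_n)$ is Cauchy, and by completeness $x_n\to x_*$ for some $x_*\in X$. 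Finally $\rho(h(x_*),x_*)\le\rho(h(x_*),h(x_n))+\rho(x_{n+1},x_*)\le\rho(x_*,x_n)+\rho(x_{n+1},x_*)\to 0$, so $h(x_*)=x_*$, and combined with the uniqueness above this finishes the proof. The only subtlety to flag is that both the convergence $c_n\to 0$ and the Cauchy estimate rely on the full Rakotch hypothesis (the increasing $\phi$ and the decreasing ratio $\phi(t)/t$), so one cannot shortcut the argument using only $\phi(t)<t$.
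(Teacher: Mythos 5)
The paper does not prove this statement; it is imported verbatim from the cited reference \cite{Rakotch}, so there is no in-paper argument to compare against. Your proof is correct and is essentially the standard argument: the reduction via Theorem~\ref{n1} to an increasing gauge, the monotone-ratio trick $\phi(c_n)\le\frac{\phi(c)}{c}c_n$ to force $c_n\to 0$ (which, as you rightly note, cannot be replaced by the bare inequality $\phi(t)<t$ without some semicontinuity assumption), and the $\varepsilon-\phi(\varepsilon)$ induction to get the Cauchy property all check out, as does the passage to the limit using $\phi(t)\le t$.
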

The previous theorem is called the Rakotch contraction theorem and is the generalization of the Banach contraction theorem (choose $\phi(x)=cx,$ where $0<c<1$). Barnsley \cite{MF1} initiated the seminal approach of interpolating real-valued functions using the Banach contraction theorem, which was later generalized by several authors; for details, see \cite{M2,SR2,Mverma2}. The study of fractals via a dynamical system can be seen in \cite{MBS1,MBS2}.  \par
The upcoming section explores the construction of a set-valued fractal function with an example. To ensure convenience, we denote $\Sigma_N=\{1,2,\ldots,N\}$ and $\Sigma_{0,N}=\{0,1,\ldots,N\}$ throughout the article,  for any $N\in\mathbb{N}.$
\section{Set-valued fractal function}\label{sec3}
Initially, the theory of non-smooth interpolation of the finite data set of $\mathbb{R}^2$ was proposed by Barnsley \cite{MF1} using the concept of an iterated function system (IFS) \cite{Fal,JH}. The interpolant obtained is termed as fractal interpolation function (FIF) and is a fixed point of the Read-Bajraktarevi$\acute{c}$ operator. This concept has a significant impact on the field of interpolation and approximation theory. Ri \cite{SR2} initiated the construction of a real-valued fractal function using the Rakotch fixed point theorem. Primarily, our aim is to demonstrate the set-valued fractal interpolation using the Rakotch contraction principle and metric-sum of sets.\\
Let $I=[a,b]\subseteq\mathbb{R}$ be a compact subset of $\mathbb{R}$ and $N\in\mathbb{N}.$ Suppose $\chi=(x_0,x_1,\ldots,x_N)$ with $a=x_0<x_1<\cdots<x_N=b$ be any partition of $I,$ and the finite data corresponding to $\chi$ is a subset of $I\times\mathcal{K}(\mathbb{R}), $ mentioned as follows:
\begin{equation}\label{data}
    \Big\{(x_n,Y_n)\in I\times\mathcal{K}(\mathbb{R}): n\in \Sigma_{0,N}\Big\}.
\end{equation}
The metric $d$ defined on $I\times \mathcal{K}(\mathbb{R})$ is as follows: 
$$d((x,Y),(x_*,Y_*))=|x-x_*|+\mathfrak{H}(Y,Y_*).$$
From this point on, we elaborate the construction of a set-valued fractal function corresponding to a data set \eqref{data} via the formation of an IFS, see \cite{Fal}, by exerting the seminal approach of Barnsley \cite{MF1}. Divide $I$ into sub-intervals as $I_n=[x_{n-1},x_n],$ for all $n\in \Sigma_N.$ Assume $L_n:I\to I_n$ be a homeomorphism such that 
\begin{subequations}\label{Ln}
 \begin{align}
    & L_n(x_0)=x_{n-1},~L_n(x_N)=x_{n}\text{ and } \label{EQ:A}\\
    &|L_n(x)-L_n(y)|\le\gamma_n|x-y|, ~\forall ~x, y\in  I,
\end{align}   
\end{subequations}
where $0<\gamma_n<1.$ Again, suppose $\Omega_n:I\times \mathcal{K}(\mathbb{R})\to \mathcal{K}(\mathbb{R})$ be a continuous functions such that
\begin{subequations}\label{omega}
 \begin{align}
    &\Omega_n(x_0, Y_0)=Y_{n-1},~\Omega_n(x_N,Y_N)=Y_{n}\text{ and } \label{112}\\
    &\mathfrak{H}\big(\Omega_n(x,Y), \Omega_n(x,Y_*)\big)\le\phi_n\big(\mathfrak{H}(Y,Y_*)\big), 
\end{align}   
\end{subequations}
for all $(x,Y), (x,Y_*)\in I\times \mathcal{K}(\mathbb{R}).$ Also, $\phi_n:\mathbb{R}_+\to \mathbb{R}_+$ is an increasing function with $\frac{\phi_n(t)}{t}<1$ and decreasing, for all $t>0.$ Equation (\ref{112}) is commonly termed as end-point condition. Thus, the required IFS $\mathcal{J}$ consists of mappings on $I\times \mathcal{K}(\mathbb{R})$ as follows:
\begin{equation}\label{IFS}
   \mathcal{J}:=\big\{I\times \mathcal{K}(\mathbb{R}); \ \mathcal{W}_n:~n\in \Sigma_{N}\big\},
\end{equation}
where $\mathcal{W}_n$ is a function on $I\times \mathcal{K}(\mathbb{R})$ as 
$$\mathcal{W}_n(x,Y):=\big(L_n(x),\Omega_n(x,Y)\big), ~\forall ~(x,Y)\in I\times \mathcal{K}(\mathbb{R}).$$
Clearly, $\mathcal{W}_n$ is continuous.
Consider the space $\mathcal{C}(I,\mathcal{K}(\mathbb{R})):=\big\{g:I\to\mathcal{K}(\mathbb{R}): g \text{ is continuous}\big\}$ is a complete metric space equipped with the metric $d_C$ outlined as
\begin{equation*}
    d_C(g,g_*):=\sup_{x\in I}\mathfrak{H}(g(x),g_*(x)).
\end{equation*}
For notational simplicity, $\|g\|_C=d_C(g,\bold{0}),$ where $\bold{0}$ is the zero function. The following result demonstrates the existence of a unique continuous function interpolating data \eqref{data}, and its graph is an attractor of IFS $\mathcal{J}.$
\begin{theorem}\label{th3.1}
Let $\mathcal{W}_n, \ n\in \Sigma_N$ are continuous functions on $I\times\mathcal{K}(\mathbb{R})$ as $\mathcal{W}_n(x,Y)=\big(L_n(x),\Omega_n(x,Y)\big),$ where $L_n$ and $\Omega_n$ are as specified in equations \eqref{Ln} and \eqref{omega}, respectively. Then there exists a unique continuous function $f: I\to\mathcal{K}(\mathbb{R})$ which interpolates data \eqref{data} and satisfies the self-referential equation 
\begin{equation}
     f(x)=\Omega_n\big(L_n^{-1}(x),f(L_n^{-1}(x))\big),
  \end{equation}
  for all $x\in I_n, \ n\in \Sigma_N.$ Also, IFS $\mathcal{J}$ has a unique attractor as
\begin{equation*}
    \mathcal{G}_*(f)=\bigcup_{n\in\Sigma_N}\mathcal{W}_n\big(\mathcal{G}_*(f)\big),
\end{equation*}
where $\mathcal{G}_*(f)=\{(x,f(x))\in I\times\mathcal{K}(\mathbb{R}):x\in I\}.$ Moreover, for a given probability vector $p=(p_n)_{n\in\Sigma_N}$, there exists a unique Borel probability measure $\mu_p$ supported on $\mathcal{G}_*(f)$ such that
\begin{align}\label{e3.1}
\mu_p=\sum_{n \in \Sigma_N} p_n \mu_p \circ \mathcal{W}_n^{-1}.    
\end{align}
\end{theorem}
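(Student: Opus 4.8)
The plan is to set up a Read--Bajraktarevi\'c--type operator on the complete metric space $\mathcal{C}(I,\mathcal{K}(\mathbb{R}))$ and show it is a Rakotch contraction, then invoke Theorem \ref{RK}; the measure statement will follow from a second Rakotch-type argument on the space of probability measures. First I would restrict attention to the subspace $\mathcal{C}_Y=\{g\in\mathcal{C}(I,\mathcal{K}(\mathbb{R})): g(x_0)=Y_0,\ g(x_N)=Y_N\}$, which is closed in $\mathcal{C}(I,\mathcal{K}(\mathbb{R}))$ and hence complete. On $\mathcal{C}_Y$ define $T$ by $(Tg)(x)=\Omega_n\big(L_n^{-1}(x), g(L_n^{-1}(x))\big)$ for $x\in I_n$. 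The first key step is to verify this is well defined: at a shared endpoint $x=x_{n}=L_n(x_N)=L_{n+1}(x_0)$ one uses the end-point conditions \eqref{EQ:A} and \eqref{112} together with $g(x_0)=Y_0$, $g(x_N)=Y_N$ to check the two formulas agree (both give $Y_n$), and continuity of $Tg$ on each $I_n$ follows from continuity of $\Omega_n$, $L_n^{-1}$ and $g$; so $T$ maps $\mathcal{C}_Y$ into itself.

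The second key step is the contraction estimate. For $g,g_*\in\mathcal{C}_Y$ and $x\in I_n$,
\begin{align*}
\mathfrak{H}\big((Tg)(x),(Tg_*)(x)\big)
&=\mathfrak{H}\Big(\Omega_n\big(L_n^{-1}(x),g(L_n^{-1}(x))\big),\Omega_n\big(L_n^{-1}(x),g_*(L_n^{-1}(x))\big)\Big)\\
&\le \phi_n\Big(\mathfrak{H}\big(g(L_n^{-1}(x)),g_*(L_n^{-1}(x))\big)\Big)\\
&\le \phi_n\big(d_C(g,g_*)\big),
\end{align*}
using \eqref{omega} and the monotonicity of $\phi_n$. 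Taking the supremum over $x\in I$ and setting $\phi=\max_{n\in\Sigma_N}\phi_n$, one gets $d_C(Tg,Tg_*)\le \phi\big(d_C(g,g_*)\big)$. Here I would note that a finite maximum of increasing functions $\phi_n$ with $\phi_n(t)/t<1$ and $\phi_n(t)/t$ decreasing is again increasing with $\phi(t)/t<1$ and $\phi(t)/t$ decreasing (for the last property one checks pointwise: if $\phi=\phi_n$ near a given $t$ then the ratio is locally that of a decreasing-ratio function; a short argument handles the switching points), so by Theorem \ref{n1} $\phi$ is a Rakotch gauge and $T$ is a Rakotch contraction. Theorem \ref{RK} then yields a unique fixed point $f\in\mathcal{C}_Y$, which by construction satisfies the self-referential equation and, evaluated at the partition points via the end-point conditions, interpolates the data \eqref{data}.

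For the attractor statement, I would let $\mathcal{G}_*(f)=\{(x,f(x)):x\in I\}$ and verify directly that $\mathcal{W}_n(\mathcal{G}_*(f))=\{(L_n(x),\Omega_n(x,f(x))):x\in I\}=\{(t,f(t)):t\in I_n\}$ using the self-referential equation; taking the union over $n$ and using $I=\bigcup_n I_n$ gives $\mathcal{G}_*(f)=\bigcup_{n\in\Sigma_N}\mathcal{W}_n(\mathcal{G}_*(f))$. Uniqueness of the attractor among nonempty compact sets follows since $\mathcal{J}$ is an IFS of Rakotch contractions on the complete metric space $(I\times\mathcal{K}(\mathbb{R}),d)$ (each $\mathcal{W}_n$ contracts the first coordinate by $\gamma_n<1$ and the second by the gauge $\phi_n$, so the product is a Rakotch contraction in the metric $d$), hence the associated Hutchinson operator on the hyperspace of compact sets with the Hausdorff metric is itself a Rakotch contraction by the usual maximum-over-branches argument. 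Finally, for the measure, I would apply the same principle to the Hutchinson operator $M\mapsto \sum_{n\in\Sigma_N}p_n\, M\circ\mathcal{W}_n^{-1}$ on the space of Borel probability measures on $I\times\mathcal{K}(\mathbb{R})$ with support in the (compact) attractor, metrized by the Monge--Kantorovich metric; this operator inherits a Rakotch-type contraction from the $\mathcal{W}_n$, so Theorem \ref{RK} gives the unique invariant measure $\mu_p$ satisfying \eqref{e3.1}, and invariance forces $\mathrm{supp}(\mu_p)\subseteq\mathcal{G}_*(f)$.

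\textbf{Main obstacle.} The routine parts are the well-definedness and the endpoint bookkeeping; the genuinely delicate point is confirming that the finite maximum $\phi=\max_n\phi_n$ still satisfies the Rakotch conditions (increasing, ratio $<1$, ratio decreasing), since the ``decreasing ratio'' property can a priori fail at points where the active index $n$ changes --- I would handle this either by the characterization in Theorem \ref{n1} or by passing to $\tilde\phi(t)=\sup_{0<s\le t}\phi(s)$ and checking it dominates $\phi$, is increasing, and retains the decreasing-ratio property. The second mild obstacle is stating the measure result at the right level of generality: one must ensure the Hutchinson operator on measures is a contraction in the Monge--Kantorovich metric under merely Rakotch (not Banach) contractivity of the $\mathcal{W}_n$, which is known but worth citing.
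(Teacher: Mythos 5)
Your proposal matches the paper's proof essentially step for step: the same closed subspace $\mathcal{C}_*(I,\mathcal{K}(\mathbb{R}))$ of functions pinned at the endpoints, the same Read--Bajraktarevi\'c operator, the same contraction estimate with gauge $\phi=\max_{n\in\Sigma_N}\phi_n$ followed by the Rakotch fixed point theorem, and the same direct verification of the attractor identity, with the measure statement handled by the standard Hutchinson-type argument (the paper simply cites Theorem 3.8 of \cite{Mverma2} for that part rather than running the Monge--Kantorovich argument you sketch). The one point you flag as genuinely delicate is in fact immediate: since $\phi(t)/t=\max_{n\in\Sigma_N}\big(\phi_n(t)/t\big)$ for $t>0$, the ratio is a finite maximum of decreasing functions and hence decreasing, so no switching-point analysis or regularization $\tilde\phi$ is needed.
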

\begin{proof}
  Define $\mathcal{C}_*(I,\mathcal{K}(\mathbb{R})):=\{g\in\mathcal{C}(I,\mathcal{K}(\mathbb{R})): g(x_0)=Y_0,~g(x_N)=Y_N \}.$ Then $\mathcal{C}_*(I,\mathcal{K}(\mathbb{R}))$ is a closed subset of $\mathcal{C}(I,\mathcal{K}(\mathbb{R})).$ For $h$ be the limit point of $\mathcal{C}_*(I,\mathcal{K}(\mathbb{R})),$ there exists a sequence $\{h_n\}_{n\in\mathbb{N}}\subset\mathcal{C}_*(I,\mathcal{K}(\mathbb{R}))$ such that
\begin{equation*}
    d_C(h_n,h)\to 0 \text{ as }n\to\infty.
\end{equation*}
Thus, for any $\epsilon>0,$ there exists $n_0\in\mathbb{N}$ such that $d_C(h_n,h)<\epsilon,$ for all $n\ge n_0.$ This gives $\mathfrak{H}(h_n(x),h(x))<\epsilon$ implies that $h_n(x)\subseteq (h(x))_\epsilon$ and $h(x)\subseteq (h_n(x))_\epsilon.$ So,
$$Y_0\subseteq (h(x_0))_\epsilon \text{ and }h(x_0)\subseteq (Y_0)_\epsilon.$$
As a consequence $h(x_0)=Y_0.$ Similarly, $h(x_N)=Y_N.$ So, $\mathcal{C}_*(I,\mathcal{K}(\mathbb{R}))$ is complete. Now, define Read-Bajraktarevi$\acute{c}$ (RB) operator $\mathcal{R}:\mathcal{C}_*(I,\mathcal{K}(\mathbb{R}))\to\mathcal{C}_*(I,\mathcal{K}(\mathbb{R}))$ as follows:
  \begin{equation*}
      \mathcal{R}g(x):=\Omega_n\big(L_n^{-1}(x),g(L_n^{-1}(x))\big),
  \end{equation*}
  for all $x\in I_n, \ n\in \Sigma_N.$ Notice that 
  \begin{equation*}
     \mathcal{R}g(x_0)=\Omega_1\big(L_1^{-1}(x_0),g(L_1^{-1}(x_0))\big)=\Omega_1(x_0,g(x_0))=Y_0.
  \end{equation*}
 On the same lines $\mathcal{R}g(x_N)=Y_N,$ claiming that $\mathcal{R}$ is well-defined on $\mathcal{C}_*(I,\mathcal{K}(\mathbb{R})).$ Again, 
  \begin{equation*}
      \begin{aligned}
          d_C\big(\mathcal{R}g,\mathcal{R}g_*\big)&=\sup_{x\in I}\mathfrak{H}\big(\mathcal{R}g(x),\mathcal{R}g_*(x)\big)\\
          &=\max_{x\in I_n, n\in \Sigma_N}\mathfrak{H}\big(\mathcal{R}g(x),\mathcal{R}g_*(x)\big)\\
          &=\max_{x\in I_n, n\in \Sigma_N}\mathfrak{H}\big(\Omega_n(L_n^{-1}(x),g(L_n^{-1}(x))),\Omega_n(L_n^{-1}(x),g_*(L_n^{-1}(x)))\big)\\
          &\le\max_{x\in I_n, n\in \Sigma_N}\phi_n\bigg(\mathfrak{H}\big(g(L_n^{-1}(x)),g_*(L_n^{-1}(x))\big)\bigg).          
      \end{aligned}
  \end{equation*}
  Since $\phi_n$ is an increasing functions on $\mathbb{R}_+,$ we get
  \begin{equation*}
      \begin{aligned}
         d_C\big(\mathcal{R}g,\mathcal{R}g_*\big)&\le\max_{n\in \Sigma_N}\phi_n\bigg(\max_{x\in I_n}\mathfrak{H}\big(g(L_n^{-1}(x)),g_*(L_n^{-1}(x))\big)\bigg)\\
         &\le \max_{n\in \Sigma_N}\phi_n\bigg(\max_{x\in I}\mathfrak{H}\big(g(x),g_*(x)\big)\bigg)\\
         &=\max_{n\in \Sigma_N}\phi_n\big(d_C(g,g_*)\big)\\
         &=\phi\big(d_C(g,g_*)\big),
      \end{aligned}
  \end{equation*}
  where $\phi:\mathbb{R}_+\to\mathbb{R}_+$ is given as $\phi(t)=\max_{n\in \Sigma_N}\phi_n(t).$ It is easily observed that $\phi$ is an increasing function, $\frac{\phi(t)}{t}<1$ and decreasing for all $t>0.$ It follows from Theorem \ref{RK} that $\mathcal{R}$ is a Rakotch contraction on $\mathcal{C}_*(I,\mathcal{K}(\mathbb{R}))$ and there exists a  unique $f\in\mathcal{C}_*(I,\mathcal{K}(\mathbb{R}))$ satisfying $\mathcal{R}(f)=f,$ that is, 
 $$f(x)=\Omega_n\big(L_n^{-1}(x),f(L_n^{-1}(x))\big),$$
  for all $x\in I_n, \ n\in \Sigma_N.$ It is straight forward to verify that $f(x_n)=Y_n,$ for all $n\in \Sigma_{0,N},$ that is, $f$ interpolates initial data set \eqref{data}. In view of the previous equation, we get
  \begin{equation*}
      \begin{aligned}
          \bigcup_{n\in\Sigma_N}\mathcal{W}_n\big(\mathcal{G}_*(f)\big)&=\bigcup_{n\in\Sigma_N}\mathcal{W}_n\big\{(x,f(x)):x\in I\big\}\\
          &=\bigcup_{n\in\Sigma_N}\big\{\big(L_n(x),\Omega_n(x,f(x))\big):x\in I\big\}\\
          &=\bigcup_{n\in\Sigma_N}\big\{\big(L_n(x),f(L_n(x))\big):x\in I\big\}=\mathcal{G}_*(f).         
      \end{aligned}
  \end{equation*}
  Using Theorem  $3.8$  of \cite{Mverma2}, the existence and uniqueness of the measure $\mu_p$ follows. This completes the proof.
\end{proof}
The functions $L_n$ and $\Omega_n,\ n\in \Sigma_N$ can be conveniently chosen as
\begin{subequations}
    \begin{align}
        L_n(x)&=a_nx+b_n,\label{eq:a}\\
        \Omega_n(x,Y)&=Q_n(x,Y)\oplus S_n(x),\label{eq:b}
    \end{align}
\end{subequations}
where $a_n,b_n\in\mathbb{R}$ that are uniquely evaluated using equation \eqref{EQ:A} as 
\begin{equation*}
    a_n=\frac{x_n-x_{n-1}}{x_N-x_0} \text{ and } b_n=\frac{x_nx_0-x_{n-1}x_N}{x_N-x_0}.
\end{equation*}
Also, $Q_n:I\times \mathcal{K}(\mathbb{R})\to \mathcal{K}(\mathbb{R})$ is a continuous and Rakotch contraction corresponding to the second variable, that is,
\begin{equation*}
    \mathfrak{H}\big(Q_n(x,Y),Q_n(x,Y_*)\big)\le \phi_n\big(\mathfrak{H}(Y,Y_*)\big),
\end{equation*}
and $S_n:I\to \mathcal{K}(\mathbb{R})$ are continuous functions satisfying $Q_n(x_0,Y_0)\oplus S_n(x_0) =Y_{n-1}$ and $Q_n(x_N,Y_N)\oplus S_n(x_N) =Y_{n}.$
\begin{corollary}
Let $L_n$ and $\Omega_n,$ for $n\in\Sigma_N$ be functions specified in equations \eqref{eq:a} and \eqref{eq:b}, respectively. Then there exists a unique continuous function $f: I\to\mathcal{K}(\mathbb{R})$ which interpolates data \eqref{data} and satisfies the self-referential equation as 
\begin{equation*}
     f(x)=Q_n(L_n^{-1}(x),f(L_n^{-1}(x)))\oplus S_n(L_n^{-1}(x)),
  \end{equation*}
  for all $x\in I_n, \ n\in \Sigma_N.$ 

\end{corollary}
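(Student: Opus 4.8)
The plan is to deduce this corollary directly from Theorem \ref{th3.1} by verifying that the specific choices of $L_n$ and $\Omega_n$ given in \eqref{eq:a} and \eqref{eq:b} satisfy all the hypotheses of that theorem. First I would check the conditions on $L_n(x)=a_nx+b_n$: the endpoint conditions $L_n(x_0)=x_{n-1}$ and $L_n(x_N)=x_n$ are precisely what pin down $a_n$ and $b_n$ to the stated formulas (a two-by-two linear system), and the contractivity constant is $\gamma_n=a_n=\tfrac{x_n-x_{n-1}}{x_N-x_0}$, which lies in $(0,1)$ since the $x_j$ are strictly increasing and $n\in\Sigma_N$ with $N\ge 2$ (or trivially if $N=1$, a degenerate case one can ignore). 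So \eqref{Ln} holds.

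Next I would verify \eqref{omega} for $\Omega_n(x,Y)=Q_n(x,Y)\oplus S_n(x)$. Continuity of $\Omega_n$ follows from continuity of $Q_n$, continuity of $S_n$, and continuity of the metric-sum operation $\oplus$ in its arguments (which is implicit in the framework; alternatively it follows from Lemma \ref{le1}, since $\mathfrak{H}(A\oplus B, C\oplus D)\le\mathfrak{H}(A,C)+\mathfrak{H}(B,D)$ gives joint Lipschitz-type control). The endpoint condition \eqref{112}, namely $\Omega_n(x_0,Y_0)=Y_{n-1}$ and $\Omega_n(x_N,Y_N)=Y_n$, is exactly the stated requirement $Q_n(x_0,Y_0)\oplus S_n(x_0)=Y_{n-1}$ and $Q_n(x_N,Y_N)\oplus S_n(x_N)=Y_n$. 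The Rakotch-contractivity estimate is the crux: using Lemma \ref{le1} with $A=Q_n(x,Y)$, $B=S_n(x)$, $C=Q_n(x,Y_*)$, $D=S_n(x)$, we get
\begin{equation*}
\mathfrak{H}\big(\Omega_n(x,Y),\Omega_n(x,Y_*)\big)=\mathfrak{H}\big(Q_n(x,Y)\oplus S_n(x), Q_n(x,Y_*)\oplus S_n(x)\big)\le \mathfrak{H}\big(Q_n(x,Y),Q_n(x,Y_*)\big)\le \phi_n\big(\mathfrak{H}(Y,Y_*)\big),
\end{equation*}
where the middle inequality uses $\mathfrak{H}(S_n(x),S_n(x))=0$. (Even more directly, Lemma \ref{11} gives $\mathfrak{H}(S_n(x)\oplus Q_n(x,Y), S_n(x)\oplus Q_n(x,Y_*))=\mathfrak{H}(Q_n(x,Y),Q_n(x,Y_*))$ after using commutativity of $\oplus$.) Since $\phi_n$ is an increasing function with $\phi_n(t)/t<1$ and decreasing for $t>0$, hypothesis \eqref{omega} is fully met.

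With both \eqref{Ln} and \eqref{omega} verified, Theorem \ref{th3.1} applies verbatim and yields a unique continuous $f:I\to\mathcal{K}(\mathbb{R})$ interpolating \eqref{data} with $f(x)=\Omega_n\big(L_n^{-1}(x),f(L_n^{-1}(x))\big)$ for $x\in I_n$; substituting the explicit form of $\Omega_n$ gives exactly $f(x)=Q_n(L_n^{-1}(x),f(L_n^{-1}(x)))\oplus S_n(L_n^{-1}(x))$, which is the asserted self-referential equation. I do not anticipate any real obstacle here — the only point requiring care is confirming that $\oplus$ interacts well enough with the Hausdorff metric to preserve the Rakotch estimate, and that is precisely what Lemmas \ref{11} and \ref{le1} are designed to supply; everything else is bookkeeping.
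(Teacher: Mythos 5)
Your proposal is correct and matches the paper's intent exactly: the corollary is stated without proof precisely because it is Theorem \ref{th3.1} applied to the particular choices \eqref{eq:a} and \eqref{eq:b}, and your verification of the hypotheses (endpoint conditions, contractivity of $L_n$, and the Rakotch estimate for $\Omega_n$ via Lemma \ref{le1}, or Lemma \ref{11} together with commutativity of $\oplus$) is the right bookkeeping. No gaps.
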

\begin{figure}[ht]
    \centering
    \includegraphics[width=\textwidth]{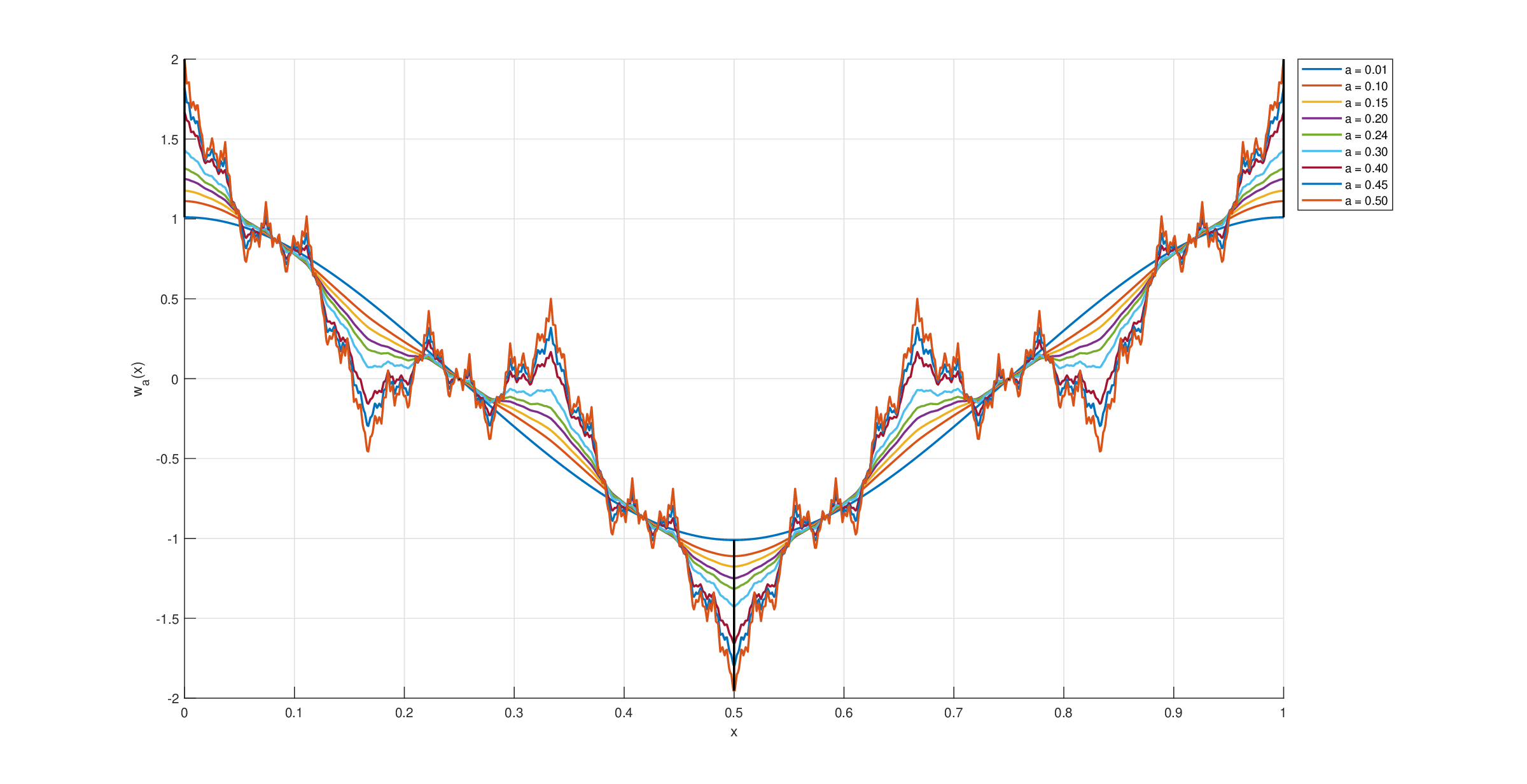}
    \caption{Graphical representation of $W,$ i.e., $\mathcal{G}_*(W).$}
    \label{fig1}
\end{figure}
\subsection{Numerical illustration:}
Let us look at an example so that the construction becomes more visible. First, we will prove the continuity of $h:[0.01,0.5]\rightarrow\mathbb{R}$ as $h(a)=w_a(x),$ for fixed $0\le x\le1,$ where $w_a:[0,1]\rightarrow\mathbb{R}$ is the Weierstrass function given as 
$$w_a(x)=\sum_{k=0}^\infty a^k\cos(2\pi 3^kx), ~~ \forall~ x\in [0,1]. $$
Let $\epsilon>0,$ then there exist $N_1,N_2\in\mathbb{N}$ such that
 $$\sum_{k=l}^\infty a^k_n|\cos(2\pi 3^kx)|\le\frac{\epsilon}{3},~ \forall ~l\ge N_1,\text{ and }\sum_{k=l}^\infty a^k|\cos(2\pi 3^kx)|\le\frac{\epsilon}{3},~ \forall ~l\ge N_2.$$
    Choose $N_3=\max\{N_1,N_2\},$ and $N_4 \in\mathbb{N}$ with $|a_n^k-a^k| \le \frac{\epsilon}{3 N_3} $ for all $n \ge N_4,$ then
\begin{align*}
    |h(a_n)-h(a)|&=|w_{a_n}(x)-w_{a}(x)|\\
    &=\Big|\sum_{k=0}^\infty a_n^k\cos(2\pi 3^kx)-\sum_{k=0}^\infty a^k\cos(2\pi 3^kx)\Big|\\
    &=\Big|\sum_{k=0}^\infty (a_n^k-a^k)\cos(2\pi 3^kx)\Big|\\
    &\le\sum_{k=0}^\infty |a_n^k-a^k||\cos(2\pi 3^kx)|\\
      &\le\sum_{k\le N_3} |a_n^k-a^k|+\sum_{k> N_3} |a_n^k|+\sum_{k> N_3} |a^k|\le \sum_{k\le N_3} \frac{\epsilon}{3 N_3}+\frac{\epsilon}{3}+\frac{\epsilon}{3}=\epsilon.\\
\end{align*}
Thus, $h$ is continuous on $[0.01,0.5].$ Consider the SVF $W:[0,1]\rightarrow\mathcal{K}(\mathbb{R})$ as $W(x)=\{w_a(x):0.01\le a\le 0.5\}.$ Clearly, $W$ is well-defined. To check its continuity, consider $(x_n)$ be a sequence in $[0,1]$ with $x_n\rightarrow x.$
\begin{align*}
    \mathfrak{H}\Big(W(x_n),W(x)\Big)\le
    \sup_{a\in[0.01,0;5]}|w_a(x_n)-w_a(x)|.
\end{align*}
Thus, $W$ is continuous. Its graphical representation is shown in Figure \ref{fig1}, but only a finite number of values of $a$ are plotted to ensure its visibility. Consider the partition $\chi=\Big(0,\frac{1}{4},\frac{1}{2},\frac{3}{4},1\Big)$ of $[0,1],$ then
\begin{align*}
    &W(0)=[1.0101,2]\\
    &W\Big(\frac{1}{4}\Big)=[-0.0067179,-0.00097261]\\
    &W\Big(\frac{1}{2}\Big)=[-1.9532,-1.0101]\\
    &W\Big(\frac{3}{4}\Big)=[-0.0067179,-0.00097261]\\&W(1)=[1.0101,2].
\end{align*}
The metric Bernstein polynomial \cite{Dyn3} is given as
    \begin{align*}
        \mathcal{B}_k^Mf(x)=\bigoplus_{j=0}^k\binom{k}{j}x^j(1-x)^{k-j}f\Big(\frac{j}{k}\Big).
    \end{align*}
For $k=4,$ we get it as follows
\begin{align*}
    \mathcal{B}^M_4W(x)&=\bigoplus_{j=0}^4\binom{4}{j}x^j(1-x)^{4-j}W\Big(\frac{j}{4}\Big)\\&=(1-x)^4W(0)\oplus 4x(1-x)^3W\Big(\frac{1}{4}\Big)\oplus 6x^2(1-x)^2W\Big(\frac{1}{2}\Big)\oplus\\
    &\quad4x^3(1-x)W\Big(\frac{3}{4}\Big)\oplus x^4W(1)\\
    &=\Big((1-x)^4+x^4\Big)W(0)\oplus \Big(4x(1-x)^3+4x^3(1-x)\Big)W\Big(\frac{1}{4}\Big)\oplus 6x^2(1-x)^2W\Big(\frac{1}{2}\Big).
\end{align*}
Now, $\Lambda\Big(W(0),W\big(\frac{1}{4}\big)\Big)=\Big\{(y,-0.00097261):y\in W(0)\Big\}\cup\Big\{(1.0101,y):y\in W\big(\frac{1}{4}\big)\Big\}$ and $\Lambda\Big(W\big(\frac{1}{4}\big),W\big(\frac{1}{2}\big)\Big)=\Big\{(y,-1.0101):y\in W\big(\frac{1}{4}\big)\Big\}\cup\Big\{(-0.0067179,y):y\in W\big(\frac{1}{2}\big)\Big\}.$
Thus, 
\begin{align*}
    &\text{Ch}\Big(W(0),W\Big(\frac{1}{4}\Big),W\Big(\frac{1}{2}\Big)\Big)=\\&\Big\{(y,-0.00097261,-1.0101):y\in W(0)\Big\}\cup\Big\{(1.0101,y,-1.0101):y\in W\Big(\frac{1}{4}\Big)\Big\}\cup\\&\quad\Big\{(1.0101,-0.0067179,y):y\in W\Big(\frac{1}{2}\Big)\Big\}.
\end{align*}
Consequently, 
\begin{align*}
    \mathcal{B}^M_4W(x)&=\Bigg[\Big((1-x)^4+x^4\Big)1.0101-\Big(4x(1-x)^3+4x^3(1-x)\Big)0.00097261\\
    &\quad-6.0606x^2(1-x)^2,\Big((1-x)^4+x^4\Big)2-\Big(4x(1-x)^3+4x^3(1-x)\Big)0.00097261\\
    &\quad-6.0606x^2(1-x)^2\Bigg]\cup\Bigg[\Big((1-x)^4+x^4\Big)1.0101-\Big(4x(1-x)^3+4x^3(1-x)\Big)0.0067179\\
    &\quad-6.0606x^2(1-x)^2,\Big((1-x)^4+x^4\Big)1.0101-\Big(4x(1-x)^3+4x^3(1-x)\Big)0.00097261\\
    &\quad-6.0606x^2(1-x)^2\Bigg]\cup\Bigg[\Big((1-x)^4+x^4\Big)1.0101-\Big(4x(1-x)^3+4x^3(1-x)\Big)0.0067179\\
    &\quad-11.7192x^2(1-x)^2,\Big((1-x)^4+x^4\Big)1.0101-\Big(4x(1-x)^3+4x^3(1-x)\Big)0.0067179\\
    &\quad-6.0606x^2(1-x)^2\Bigg]\\
    &=\Bigg[\Big((1-x)^4+x^4\Big)1.0101-\Big(4x(1-x)^3+4x^3(1-x)\Big)0.0067179\\
    &\quad-11.7192x^2(1-x)^2,\Big((1-x)^4+x^4\Big)2-\Big(4x(1-x)^3+4x^3(1-x)\Big)0.00097261\\
    &\quad-6.0606x^2(1-x)^2\Bigg].
\end{align*}
The graphical representation of $\mathcal{G}_*(\mathcal{B}^M_4W)$ is shown in Figure \ref{fig2}.
\begin{figure}[ht]
    \centering
    \includegraphics[width=\textwidth]{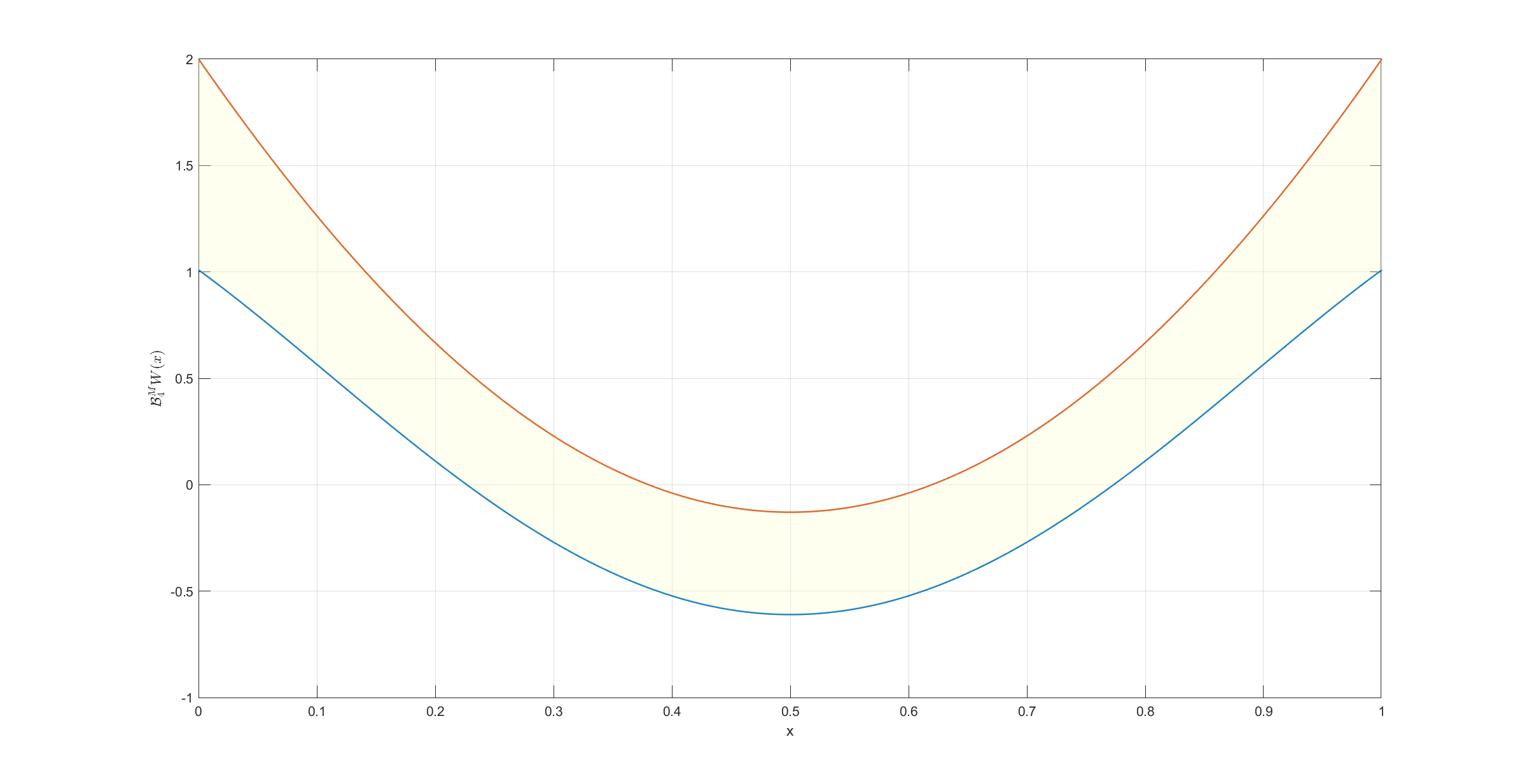}
    \caption{Graphical representation of $\mathcal{B}_4^MW,$ i.e., $\mathcal{G}_*(\mathcal{B}_4^MW$)}
    \label{fig2}
\end{figure}

Our next target is to comment on the mutual singularity of the invariant measure defined with respect to the probability vector in Theorem \ref{th3.1}.
Mor\'an and Rey \cite{Moran} exhibited the singularity of self-similar measure supported on a compact subset of the Euclidean domain with respect to the Hausdorff measure. 
\begin{lemma}\label{homeo}
  Let $p=(p_{n})_{n\in \Sigma_N}$ be a probability vector, and $\mu_p$ and $\nu_p$ be the invariant Borel probability measures generated by the IFSs $\mathcal{J}$ and $\{I ;~L_{n} :n \in \Sigma_N\},$ respectively. Let $P: I  \to \mathcal{G}_*(f)$ be the homeomorphism defined as
 $P(x) = (x,f(x)),$ for all $x \in I  .$ Then \[\nu_p(B) =\mu_p(P(B)),\] for all Borel subsets $B $ of $I .$  
\end{lemma}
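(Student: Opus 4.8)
The plan is to identify the pushforward measure $P_{*}\nu_p$ with $\mu_p$ and then read off the claim. First I would record the conjugacy relation
\[
\mathcal{W}_n\circ P=P\circ L_n\qquad\text{on }I,\ \text{for every }n\in\Sigma_N .
\]
This is immediate from the self-referential equation: for $x\in I$ one has $f(L_n(x))=\Omega_n\big(L_n^{-1}(L_n(x)),f(L_n^{-1}(L_n(x)))\big)=\Omega_n(x,f(x))$, hence
$\mathcal{W}_n(P(x))=\big(L_n(x),\Omega_n(x,f(x))\big)=\big(L_n(x),f(L_n(x))\big)=P(L_n(x))$.

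Next, set $\tilde\mu:=P_{*}\nu_p$, i.e.\ $\tilde\mu(E):=\nu_p(P^{-1}(E))$ for Borel $E\subseteq I\times\mathcal{K}(\mathbb{R})$. Since $P$ is a homeomorphism onto $\mathcal{G}_*(f)$ and $\mathcal{G}_*(f)$ is closed (hence Borel) in $I\times\mathcal{K}(\mathbb{R})$, the measure $\tilde\mu$ is a Borel probability measure supported on $\mathcal{G}_*(f)$, and $P$ carries Borel subsets of $I$ to Borel subsets of $\mathcal{G}_*(f)$. Using the conjugacy relation together with the invariance $\nu_p=\sum_{n\in\Sigma_N}p_n\,\nu_p\circ L_n^{-1}$ of the attractor measure of $\{I;L_n\}$, I would compute, for any Borel $E$,
\[
\sum_{n\in\Sigma_N}p_n\,\tilde\mu\big(\mathcal{W}_n^{-1}(E)\big)
=\sum_{n\in\Sigma_N}p_n\,\nu_p\big(L_n^{-1}(P^{-1}(E))\big)
=\nu_p\big(P^{-1}(E)\big)=\tilde\mu(E),
\]
where the first equality uses $(\mathcal{W}_n\circ P)^{-1}=(P\circ L_n)^{-1}=L_n^{-1}\circ P^{-1}$. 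Thus $\tilde\mu$ satisfies the invariance equation \eqref{e3.1}, and by the uniqueness assertion of Theorem \ref{th3.1} we conclude $\tilde\mu=\mu_p$. Finally, for a Borel set $B\subseteq I$ the injectivity of $P$ yields $\mu_p(P(B))=\tilde\mu(P(B))=\nu_p\big(P^{-1}(P(B))\big)=\nu_p(B)$, which is the desired identity.

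The substantive content here is really just the conjugacy $\mathcal{W}_n\circ P=P\circ L_n$ and the invocation of uniqueness; the step I would flag as requiring the most care is the measure-theoretic bookkeeping, namely verifying that $\mathcal{G}_*(f)$ is closed in $I\times\mathcal{K}(\mathbb{R})$ so that $\mu_p$ and $\tilde\mu$ may be compared as Borel measures on the product without loss of mass, that $P$ sends Borel sets to Borel sets, and that the preimages under $L_n$ (a map $I\to I_n$) interact correctly with the relevant $\sigma$-algebras. None of this is deep, but it must be made explicit for the pushforward-plus-uniqueness argument to be rigorous.
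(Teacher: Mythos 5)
Your proof is correct and uses essentially the same mechanism as the paper: the conjugacy $\mathcal{W}_n\circ P=P\circ L_n$ combined with uniqueness of an invariant measure, the only difference being that you push $\nu_p$ forward to $\mathcal{G}_*(f)$ and invoke uniqueness of $\mu_p$ from Theorem \ref{th3.1}, whereas the paper pulls $\mu_p$ back to $I$ via $B\mapsto\mu_p(P(B))$ and invokes uniqueness of $\nu_p$. Both directions are valid mirror images of one another, and your attention to the measure-theoretic details (Borel image under the homeomorphism $P$, support on $\mathcal{G}_*(f)$) is appropriate, if anything more explicit than the paper's.
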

 \begin{proof}
 Let $\mathcal{P}(I)$ and $\mathcal{P}(\mathcal{G}_*(f))$ be the spaces of the Borel probability measures supported on $I $ and $\mathcal{G}_*(f)$, respectively. Let us define a homeomorphism $\Psi: \mathcal{P}(\mathcal{G}_*(f))\to \mathcal{P}(I)$ as follows
 \begin{equation*}
   (\Psi\omega)(B)=\omega(P(B)),
 \end{equation*}
 for all $\omega\in \mathcal{P}(\mathcal{G}_*(f))$ and $B$ is a Borel subset of $I.$ Utilizing equation \eqref{e3.1}, estimate
 \begin{align*}
     (\Psi\mu_p)(B)&=\mu_p(P(B))=
     \sum_{n\in\Sigma_N}p_{n}\mu_p \circ \mathcal{W}_{n}^{-1}(P(B))\\&=
     \sum_{n\in\Sigma_N}p_{n}\mu_p \circ (P\circ L_{n}^{-1}(B))=
     \sum_{n\in\Sigma_N}p_{n}(\Psi\mu_p) \circ L_{n}^{-1}(B).
     \end{align*}
Thus, $\Psi\mu_p=\sum_{n\in \Sigma_N}p_{n}(\Psi\mu_p) \circ L_{n}^{-1}.$ But $\nu_p$ is the unique Borel probability measure generated by the IFS $\{I ; ~L_{n}:n \in \Sigma_N\}$ with
 $$\nu_p=\sum_{n\in \Sigma_N}p_{n} \nu_p \circ L_{n}^{-1}.$$
 Therefore, $\Psi\mu_p=\nu_p.$ This completes the assertion.
 \end{proof}
 \begin{theorem}
 Let $\mu_p\text{ and }\mu_{\tilde{p}}$ be the invariant measures generated by the IFS $\mathcal{J}$ corresponding to probability vectors $p=(p_{n})_{n\in \Sigma_N}\text{ and }\tilde{p}=(\tilde{p}_{n})_{n\in \Sigma_N},$ respectively. Then $\mu_p$ is mutually singular to $\mu_{\tilde{p}}.$
\end{theorem}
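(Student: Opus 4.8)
The plan is to transport the assertion from the attractor $\mathcal{G}_*(f)$ down to the base interval $I$ via the homeomorphism $P$ and Lemma \ref{homeo}, and then to establish the classical fact that the two self-referential measures $\nu_p$ and $\nu_{\tilde{p}}$ on $I$ attached to \emph{distinct} probability vectors are mutually singular. By Lemma \ref{homeo} we have $\nu_p(B)=\mu_p(P(B))$ and $\nu_{\tilde{p}}(B)=\mu_{\tilde{p}}(P(B))$ for every Borel $B\subseteq I$, and since $P$ is a homeomorphism it carries a Borel set witnessing $\nu_p\perp\nu_{\tilde{p}}$ onto a Borel set witnessing $\mu_p\perp\mu_{\tilde{p}}$; hence it suffices to prove $\nu_p\perp\nu_{\tilde{p}}$. (One tacitly assumes $p\neq\tilde{p}$, otherwise the two measures coincide.)

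To treat $\nu_p$ and $\nu_{\tilde{p}}$ I would introduce the symbolic coding of the IFS $\{I;\,L_n:n\in\Sigma_N\}$. Since $\max_{n}\gamma_n<1$, for each $\sigma=(\sigma_1,\sigma_2,\dots)\in\Sigma_N^{\mathbb{N}}$ the nested compact sets $L_{\sigma_1}\circ\cdots\circ L_{\sigma_k}(I)$ have diameters tending to $0$, hence meet in a single point $\pi(\sigma)$, and $\pi:\Sigma_N^{\mathbb{N}}\to I$ is a continuous surjection. Checking the invariance identity and invoking uniqueness of the invariant measure shows that $\nu_p$ is the push-forward under $\pi$ of the Bernoulli product measure $p^{\mathbb{N}}$, and similarly $\nu_{\tilde{p}}=\pi_{*}\,\tilde{p}^{\mathbb{N}}$. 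Fix an index $j$ with $p_j\neq\tilde{p}_j$. For $x\in I$ lying outside the countable set $\mathcal{N}$ of points admitting more than one $\pi$-code, write $\sigma(x)$ for its address, and set
\[
A:=\Big\{x\in I\setminus\mathcal{N}:\ \lim_{k\to\infty}\frac{1}{k}\,\#\{1\le i\le k:\ \sigma(x)_i=j\}=p_j\Big\},
\]
a Borel subset of $I$. Under $p^{\mathbb{N}}$ the coordinates are independent and identically distributed, so by the strong law of large numbers the above frequency converges to $p_j$ for $p^{\mathbb{N}}$-almost every code, whereas under $\tilde{p}^{\mathbb{N}}$ it converges to $\tilde{p}_j\neq p_j$ almost everywhere. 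Pushing forward by $\pi$, and noting that the $\pi$-preimage of the countable set $\mathcal{N}$ is both $p^{\mathbb{N}}$- and $\tilde{p}^{\mathbb{N}}$-null (for non-degenerate vectors), we obtain $\nu_p(A)=1$ and $\nu_{\tilde{p}}(A)=0$; hence $\nu_p\perp\nu_{\tilde{p}}$.

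Finally, by Lemma \ref{homeo}, $\mu_p(P(A))=\nu_p(A)=1$ while $\mu_{\tilde{p}}(P(A))=\nu_{\tilde{p}}(A)=0$, and $P(I\setminus A)=\mathcal{G}_*(f)\setminus P(A)$, so $\mu_p$ is concentrated on $P(A)$ and $\mu_{\tilde{p}}$ on its complement, which is exactly $\mu_p\perp\mu_{\tilde{p}}$. The analytic content is the single appeal to the strong law of large numbers; I expect the main obstacle to be the measure-theoretic bookkeeping in the middle step — verifying that $\nu_p$ is genuinely the coding push-forward, that the set of points of $I$ with non-unique codes is countable (hence negligible) because consecutive cells $L_n(I)=[x_{n-1},x_n]$ overlap only in a partition point, and that $x\mapsto\sigma(x)$ (and therefore $A$) is Borel — together with the routine but separate disposal of degenerate probability vectors, where a coordinate equal to $1$ yields a Dirac measure that is automatically singular against any non-atomic $\nu$.
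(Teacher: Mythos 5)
Your proof is correct, and its outer architecture coincides with the paper's: both transport the problem from $\mathcal{G}_*(f)$ down to the base interval $I$ through the homeomorphism $P$ and Lemma \ref{homeo}, reduce to showing $\nu_p\perp\nu_{\tilde p}$ for the interval IFS $\{I;\,L_n\}$, and then pull the witnessing Borel partition back up to the attractor. Where you diverge is in the key step on $I$: the paper simply observes that the $L_n$ are similarities satisfying the open set condition and cites Mor\'an--Rey (Theorem 2.1 of that reference) for the mutual singularity of the two self-similar measures, whereas you prove that statement from scratch by realizing $\nu_p$ and $\nu_{\tilde p}$ as push-forwards of Bernoulli product measures under the coding map $\pi$ and separating them with a digit-frequency set via the strong law of large numbers. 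Your route is more self-contained and slightly more general: it uses only that the level-$k$ cylinder intervals overlap in a countable set of partition endpoints, so it applies to the general contractive homeomorphisms $L_n$ of \eqref{Ln} rather than only to the affine maps $L_n(x)=a_nx+b_n$ to which the paper specializes in order to invoke the cited theorem. It also surfaces the hypotheses the paper leaves tacit, namely that $p\neq\tilde p$ and that degenerate probability vectors (some $p_n=1$, giving a Dirac measure) need a separate, easy disposal. What the citation buys the paper is brevity; what your argument buys is transparency about exactly which structural facts --- countable overlap of cells and non-degeneracy of the weights --- the singularity really rests on.
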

 \begin{proof}
 Let $\nu_p\text{ and }\nu_{\tilde{p}}$ be the invariant measures generated by IFS $\{I ; \ L_{n}=a_nx+b_n: n \in \Sigma_N\}$ associated to probability vectors $p\text{ and }\tilde{p},$ respectively. Since the IFS consists of similarity mappings, it satisfies open set condition. Exerting \cite[Theorem 2.1]{Moran}, the measures $\nu_p\text{ and }\nu_{\tilde{p}}$ are mutually singular. As a consequence, there exist two disjoint Borel subsets of $I$ (say $B\text{ and }\tilde{B}$) satisfying
$$B\cup \tilde{B}=I~\text{ and }~\nu_p(B)=\nu_{\tilde{p}}(\tilde{B})=0.$$
In view of Lemma \ref{homeo}, $P(B)\text{ and }P(\tilde{B})$ are two Borel subsets of $\mathcal{G}_*(f)$ such that $P(B)\cup P(\tilde{B})=\mathcal{G}_*(f)$ and $P(B)\cap P(\tilde{B})=\emptyset.$ Also $$\mu_p(P(B))=\nu_p(B)=0.$$
Similarly, $\mu_{\tilde{p}}(P(\tilde{B}))=0.$ This completes our proof.
 \end{proof}
We define the space of SVFs of bounded variation as
$$\mathcal{BV}(I,\mathcal{K}(\mathbb{R})):=\{f:I \to \mathcal{K}(\mathbb{R}): V_I(f)< \infty\},$$
where the total variation $V_I(f):= \sup_{\chi} \sum_{i=1}^{l} \mathfrak{H}(f(x_i),f(x_{i-1}))$, and the supremum is taken over all partitions $\chi=(x_0,x_1,\dots,x_l)$ of $I=[a,b]$ with $a=x_0<x_1< \dots<x_l=b.$ Now, we will prove that the space $\mathcal{BV}(I,\mathcal{K}(\mathbb{R}))$ is complete metric space with respect to a suitable metric defined through the metric linear sum.
As a prelude to our next result, we note the following lemma.
\begin{lemma}\label{lem2}
Consider $\{f_k\}_{k\in\mathbb{N}}$ is a sequence of set-valued continuous maps which uniformly converges to $f: I \to \mathcal{K}(\mathbb{R}).$ Then for a given partition $\chi=(x_0,x_1,\ldots,x_l)$ of $I$, we have 
\[ \sum_{i=1}^{l} \mathfrak{H}(f_k(x_i),f_k(x_{i-1})) \to \sum_{i=1}^{l} \mathfrak{H}(f(x_i),f(x_{i-1})).\]
Moreover,
 \(\sup_{\chi}  \sum_{i=1}^{l} \mathfrak{H}(f(x_i),f(x_{i-1})) \le \liminf_{k\to \infty} \sup_{\chi} \sum_{i=1}^{l} \mathfrak{H}(f_k(x_i),f_k(x_{i-1})).\)

\end{lemma}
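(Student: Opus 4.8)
The statement has two parts, and the plan is to handle them in order, with the first (convergence of the variation sum along a fixed partition) being essentially routine and the second (lower semicontinuity of the total variation) following from it by a standard supremum argument.

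First I would fix the partition $\chi=(x_0,x_1,\ldots,x_l)$ and observe that, by the triangle inequality for the Hausdorff metric,
\[
\bigl|\mathfrak{H}(f_k(x_i),f_k(x_{i-1}))-\mathfrak{H}(f(x_i),f(x_{i-1}))\bigr|
\le \mathfrak{H}(f_k(x_i),f(x_i))+\mathfrak{H}(f_k(x_{i-1}),f(x_{i-1})).
\]
Summing over $i=1,\ldots,l$ and using that $f_k\to f$ uniformly (so $\sup_{x\in I}\mathfrak{H}(f_k(x),f(x))\to 0$), the right-hand side is bounded by $2l\,d_C(f_k,f)\to 0$. Hence $\sum_{i=1}^{l}\mathfrak{H}(f_k(x_i),f_k(x_{i-1}))\to \sum_{i=1}^{l}\mathfrak{H}(f(x_i),f(x_{i-1}))$, which is the first claim. (Note $l$ is fixed here, so the finite sum causes no trouble.)

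For the second claim, fix an arbitrary partition $\chi$ of $I$. By what we just proved,
\[
\sum_{i=1}^{l}\mathfrak{H}(f(x_i),f(x_{i-1}))
=\lim_{k\to\infty}\sum_{i=1}^{l}\mathfrak{H}(f_k(x_i),f_k(x_{i-1}))
\le\liminf_{k\to\infty}\sup_{\chi'}\sum_{i=1}^{l'}\mathfrak{H}(f_k(x'_i),f_k(x'_{i-1})),
\]
since for each $k$ the sum over the particular partition $\chi$ is at most the supremum $V_I(f_k)$ over all partitions. The right-hand side no longer depends on $\chi$, so I may take the supremum over all partitions $\chi$ on the left, obtaining
\[
V_I(f)=\sup_{\chi}\sum_{i=1}^{l}\mathfrak{H}(f(x_i),f(x_{i-1}))
\le\liminf_{k\to\infty}V_I(f_k),
\]
which is exactly the desired inequality.

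The only subtlety — and the single point deserving care — is not to interchange the supremum over partitions with the limit in $k$ in the wrong direction: one can only pass the fixed-partition sum to a $\liminf$ of the suprema, and then take the sup over partitions afterwards; attempting $\sup_\chi\lim_k = \lim_k\sup_\chi$ would be unjustified. Everything else is the triangle inequality for $\mathfrak{H}$ plus uniform convergence, so no further machinery (IFS, metric sum, Rakotch contraction) is needed for this lemma.
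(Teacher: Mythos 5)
Your proof is correct, and it is the standard lower-semicontinuity argument one would expect here (the paper itself states the lemma without proof, so yours fills that gap). Both steps — the triangle inequality $|\mathfrak{H}(f_k(x_i),f_k(x_{i-1}))-\mathfrak{H}(f(x_i),f(x_{i-1}))|\le \mathfrak{H}(f_k(x_i),f(x_i))+\mathfrak{H}(f_k(x_{i-1}),f(x_{i-1}))$ combined with uniform convergence for a fixed partition, and then bounding the fixed-partition sum by $V_I(f_k)$ before passing to the $\liminf$ and only afterwards taking the supremum over partitions — are carried out in the right order and with the quantifiers handled correctly.
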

\begin{theorem}
The space $\Big(\mathcal{BV}(I,\mathcal{K}(\mathbb{R})), d_{BV}\Big)$ is a complete metric space, where $$ d_{BV}(f,g):=d_C(f,g)+  \sup_{\chi}\sum_{i=1}^{l} \mathfrak{H}\Big(  f(x_i) \oplus g (x_{i-1}), g(x_i) \oplus f(x_{i-1})\Big),$$
where $\chi=(x_0,x_1,\ldots,x_l)$ is a partition of $I.$
\end{theorem}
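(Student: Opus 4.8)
\emph{Strategy.} Two things must be checked: that $d_{BV}$ is a metric on $\mathcal{BV}(I,\mathcal{K}(\mathbb{R}))$, and that the resulting space is complete; the completeness is where the substance lies.

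\textbf{$d_{BV}$ is a metric.} Symmetry follows from the symmetry of $\mathfrak{H}$ and the commutativity of $\oplus$, and $d_{BV}(f,g)=0$ forces $d_C(f,g)=0$, hence $f=g$. Finiteness and the triangle inequality both rest on an elementary per-interval estimate: inserting an auxiliary metric sum and cancelling common summands by Lemma \ref{11} (together with commutativity of $\oplus$) yields, for $x_{i-1}<x_i$,
\[
\mathfrak{H}\big(f(x_i)\oplus g(x_{i-1}),\,g(x_i)\oplus f(x_{i-1})\big)\le\mathfrak{H}\big(f(x_i),f(x_{i-1})\big)+\mathfrak{H}\big(g(x_i),g(x_{i-1})\big),
\]
so summing over a partition and taking the supremum bounds the second summand of $d_{BV}(f,g)$ by $V_I(f)+V_I(g)<\infty$; thus $d_{BV}$ is finite on $\mathcal{BV}(I,\mathcal{K}(\mathbb{R}))$. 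For the triangle inequality, since $d_C$ already obeys it, it suffices to prove the pointwise bound
\begin{align*}
\mathfrak{H}\big(f(x_i)\oplus h(x_{i-1}),h(x_i)\oplus f(x_{i-1})\big)\le\ &\mathfrak{H}\big(f(x_i)\oplus g(x_{i-1}),g(x_i)\oplus f(x_{i-1})\big)\\
&+\mathfrak{H}\big(g(x_i)\oplus h(x_{i-1}),h(x_i)\oplus g(x_{i-1})\big),
\end{align*}
and then sum over the partition and pass to suprema; this estimate is obtained by the same manipulation, namely the triangle inequality of $\mathfrak{H}$ combined with repeated cancellation of common summands via Lemma \ref{11}.

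\textbf{Completeness: the limit and its membership.} Let $(f_k)$ be $d_{BV}$-Cauchy. Since $d_C\le d_{BV}$, it is $d_C$-Cauchy, and $\big(\mathcal{C}(I,\mathcal{K}(\mathbb{R})),d_C\big)$ is complete, so $f_k\to f$ uniformly for a unique $f\in\mathcal{C}(I,\mathcal{K}(\mathbb{R}))$. To see $f\in\mathcal{BV}(I,\mathcal{K}(\mathbb{R}))$, recall that a Cauchy sequence is bounded; taking the constant map $\mathbf{0}:x\mapsto\{0\}$ and using $A\oplus\{0\}=\{0\}\oplus A=A$ (immediate from the definition of metric chains) gives $d_{BV}(f_k,\mathbf{0})=\|f_k\|_C+V_I(f_k)$, so $\sup_k V_I(f_k)<\infty$, and Lemma \ref{lem2} then yields $V_I(f)\le\liminf_k V_I(f_k)<\infty$.

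\textbf{Completeness: convergence in $d_{BV}$.} It remains to show $d_{BV}(f_k,f)\to0$; the summand $d_C(f_k,f)\to0$ already. For the other summand, fix $\epsilon>0$, pick $K$ with $d_{BV}(f_k,f_m)<\epsilon$ for all $k,m\ge K$, fix $k\ge K$ and an arbitrary partition $\chi=(x_0,\dots,x_l)$ of $I$, and --- crucially --- pick $m=m(\chi)\ge K$ with $d_C(f_m,f)<\epsilon/l$. For each $i$, applying the triangle inequality of $\mathfrak{H}$ through the intermediate sets $f_k(x_i)\oplus f_m(x_{i-1})$ and $f_m(x_i)\oplus f_k(x_{i-1})$ and cancelling common summands by Lemma \ref{11} gives
\begin{align*}
\mathfrak{H}\big(f_k(x_i)\oplus f(x_{i-1}),f(x_i)\oplus f_k(x_{i-1})\big)\le\ &\mathfrak{H}\big(f(x_{i-1}),f_m(x_{i-1})\big)+\mathfrak{H}\big(f_m(x_i),f(x_i)\big)\\
&+\mathfrak{H}\big(f_k(x_i)\oplus f_m(x_{i-1}),f_m(x_i)\oplus f_k(x_{i-1})\big).
\end{align*}
Summing over $i=1,\dots,l$: the first two sums are each $\le l\,d_C(f_m,f)<\epsilon$, and the third is at most the second summand of $d_{BV}(f_k,f_m)$, hence $<\epsilon$; therefore $\sum_{i=1}^{l}\mathfrak{H}\big(f_k(x_i)\oplus f(x_{i-1}),f(x_i)\oplus f_k(x_{i-1})\big)<3\epsilon$. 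Since the bound $3\epsilon$ does not depend on $\chi$, the supremum over all partitions is $\le3\epsilon$ for every $k\ge K$, so $\limsup_k d_{BV}(f_k,f)\le3\epsilon$; letting $\epsilon\downarrow0$ gives $d_{BV}(f_k,f)\to0$, and completeness follows.

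\textbf{Main obstacle.} The delicate point is the convergence $d_{BV}(f_k,f)\to0$: uniform convergence by itself gives no direct control of the mixed summand of $d_{BV}$, since its defining supremum runs over partitions with unboundedly many nodes, so a bound of the type $l\,d_C(f_m,f)$ is useless with $m$ held fixed. The remedy is to let the comparison index $m$ depend on the partition $\chi$ (keeping $l\,d_C(f_m,f)$ small) while observing that the resulting bound $3\epsilon$ is nonetheless uniform in $\chi$; this, together with the exact cancellation afforded by Lemma \ref{11}, closes the argument. A lesser technical point is the per-interval triangle estimate for the mixed terms used in verifying that $d_{BV}$ is a metric.
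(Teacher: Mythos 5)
Your completeness argument is correct and is essentially the paper's: both proofs fix a partition $\chi$, replace the mixed term for the pair $(f_k,f)$ by the one for $(f_k,f_m)$ at the cost of the errors $\mathfrak{H}(f(x_{i-1}),f_m(x_{i-1}))$ and $\mathfrak{H}(f_m(x_i),f(x_i))$ obtained by cancelling the common summands $f_k(x_i)$, $f_k(x_{i-1})$ via Lemma \ref{11}, and only afterwards pass to the supremum over $\chi$. The paper does this by letting $m\to\infty$ in the finite sum (using the continuity of $\oplus$ from Lemma \ref{le1}); you do it by choosing $m=m(\chi)$ with $d_C(f_m,f)<\epsilon/l$ --- the same mechanism, made quantitative. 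Your membership argument (boundedness of the Cauchy sequence plus the lower semicontinuity of the variation from Lemma \ref{lem2}) is a harmless variant of the paper's, which instead bounds $V_I(f)$ by $d_{BV}(f_k,f)+V_I(f_k)$.

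There is, however, one genuine gap, in your verification of the triangle inequality for $d_{BV}$. The per-interval estimates you actually write out (the bound by $\mathfrak{H}(f(x_i),f(x_{i-1}))+\mathfrak{H}(g(x_i),g(x_{i-1}))$, and the three-term estimate in the completeness step) are sound, because each application of Lemma \ref{11} cancels a set occupying the \emph{same slot} of a single $\oplus$ on both sides. The asserted inequality
\begin{align*}
\mathfrak{H}\big(f(x_i)\oplus h(x_{i-1}),h(x_i)\oplus f(x_{i-1})\big)\le\ &\mathfrak{H}\big(f(x_i)\oplus g(x_{i-1}),g(x_i)\oplus f(x_{i-1})\big)\\
&+\mathfrak{H}\big(g(x_i)\oplus h(x_{i-1}),h(x_i)\oplus g(x_{i-1})\big)
\end{align*}
is not of that form: any $g$-dependent pivot interpolated between $f(x_i)\oplus h(x_{i-1})$ and $h(x_i)\oplus f(x_{i-1})$ forces you to rearrange metric sums of three sets, and $\oplus$ is not associative on $\mathcal{K}(\mathbb{R})$ (the paper's own remark with $(A\oplus B)\oplus(-B)\ne A$ exhibits exactly the kind of regrouping that fails). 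What the cancellation technique does yield is only
$\mathfrak{H}(f(x_i)\oplus h(x_{i-1}),h(x_i)\oplus f(x_{i-1}))\le\mathfrak{H}(f(x_i),g(x_i))+\mathfrak{H}(g(x_i)\oplus h(x_{i-1}),h(x_i)\oplus g(x_{i-1}))+\mathfrak{H}(g(x_{i-1}),f(x_{i-1}))$,
whose extra terms sum to something of order $l\,d_C(f,g)$ and are therefore not controlled uniformly over partitions. So ``the same manipulation'' does not close this step; it would need either a restriction to convex values (where associativity holds) or a separate argument. In fairness, the paper's proof never verifies the metric axioms at all, so on the part the paper actually proves your argument is complete and correct.
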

\begin{proof}
Assume that $\{f_k\}$ is a Cauchy sequence in $\mathcal{BV}(I,\mathcal{K}( \mathbb{R}))$ with respect to $d_{BV}.$ Equivalently, for $\epsilon >0$, there exists a $k_0\in \mathbb{N}$ such that 
\[ d_{BV}(f_k ,f_m)  < \epsilon, \text{ for all } k,m \ge k_0.\]
Using the definition of $d_{BV},$ we obtain $ d_C(f_k, f_m)  < \epsilon \text{ for all } k,m \ge k_0.$ Since $(\mathcal{C}(I,\\\mathcal{K}( \mathbb{R})),d_C)$ is a complete metric space, there exists a continuous function $f$ with $ d_C(f_k ,f) \to 0$ as $k \to \infty.$
We claim that $f \in \mathcal{BV}(I,\mathcal{K}( \mathbb{R}))$ and $d_{BV}(f_k ,f)   \to 0$ as $k \to \infty.$ Let $\chi=(x_0,x_1,\ldots,x_l)$ be a partition of $I$ and $k \ge k_0.$ With the reference to Lemma \ref{lem2}, we get
\begin{align*}
&d_{BV}(f_k,f)\\=&d_C(f_k ,f) +\sum_{i=1}^{l} \mathfrak{H}\Big(  f_k(x_i)  \oplus f (x_{i-1}), f(x_i)   \oplus f_k(x_{i-1})\Big) \\
=& \lim_{m \to \infty} \Bigg(d_C(f_k ,f_m) + \sum_{i=1}^{l} \mathfrak{H}\Big(  f_k(x_i)  \oplus f_m (x_{i-1}), f_m(x_i)   \oplus f_k(x_{i-1})\Big)\Bigg)\\  
\le &  \liminf_{m \to \infty} \Bigg(d_C(f_k ,f_m) +\sup_{\chi} \sum_{i=1}^{l} \mathfrak{H}\Big(  f_k(x_i)  \oplus f_m (x_{i-1}), f_m(x_i)   \oplus f_k(x_{i-1})\Big)\Bigg)\\ 
\le &  \sup_{m \ge k_0} \Bigg(d_C(f_k ,f_m) +\sup_{\chi} \sum_{i=1}^{l} \mathfrak{H}\Big(  f_k(x_i)  \oplus f_m (x_{i-1}), f_m(x_i)   \oplus f_k(x_{i-1})\Big)\Bigg)\\
\leq &  \sup_{m \ge k_0} d_{BV}(f_k,f_m)< \epsilon.
 \end{align*}
Since $\chi=(x_0,x_1,\ldots,x_l)$ was arbitrary, we have $d_{BV}(f_k,f) < \epsilon ~ \text{ for all } ~k \ge k_0.$ \\
It remains to show that $f \in \mathcal{BV}(I, \mathcal{K}(\mathbb{R})).$ Now by using Lemma \ref{11}, we have
\begin{align*}
    \sum_{i=1}^{l} \mathfrak{H}(f(x_i),f(x_{i-1}))
   &=\sum_{i=1}^{l} \mathfrak{H}(f(x_i) \oplus f_k(x_{i-1}),f(x_{i-1}) \oplus f_k(x_{i-1}))\\
  &\leq\sum_{i=1}^{l}\mathfrak{H}(f(x_i) \oplus f_k(x_{i-1}),f(x_{i-1}) \oplus f_k(x_{i}))+\\
  &\quad\sum_{i=1}^{l} \mathfrak{H}(f(x_{i-1})\oplus f_k(x_i),f(x_{i-1})  \oplus f_k(x_{i-1}))\\
  &\leq\sum_{i=1}^{l}\mathfrak{H}(f(x_i) \oplus f_k(x_{i-1}),f(x_{i-1}) \oplus f_k(x_{i}))+\\
  &\quad\sum_{i=1}^{l} \mathfrak{H}(f_k(x_i),f_k(x_{i-1}))\\
  &\leq d_{BV}(f_k,f)+ V_I(f_k).
\end{align*}
Since $d_{BV}(f_k,f) < \epsilon$ and $f_k \in \mathcal{BV}(I,\mathcal{K}( \mathbb{R}))$, the above inequality yields that $f \in \mathcal{BV}(I, \mathcal{K}( \mathbb{R})).$
This completes the proof.
\end{proof}
\begin{theorem}
Assume for any $n\in \Sigma_N,$ the functions $S_n \in \mathcal{BV}(I,\mathcal{K}(\mathbb{R}))$ is a compact interval-valued in $I.$ The functions $Q_n:I\times \mathcal{K}(\mathbb{R})\rightarrow\mathcal{K}(\mathbb{R})$ is compact interval-valued continuous and Rakotch contraction in second variable with
\begin{align*}
    \mathfrak{H}&(Q_n(x,Y),Q_n(x_*,Y))\le q_n|x-x_*|\mathfrak{H}(Y,\{0\}),\\
 &Q_n(x,Y)\oplus Q_n(x,Y_*)=Q_n(x,Y\oplus Y_*),
\end{align*}
where $\Big\{(N+1)\frac{\max_{n\in \Sigma_N}\phi_n(t)}{t}+\frac{\max_{n\in \Sigma_N}q_nN|I|}{\min_{n\in \Sigma_
N}|a_n|}\Big\}<1$ for all $t\in\mathbb{R}_+.$ Then, there exists a unique continuous fractal function $h\in \mathcal{BV}(I,\mathcal{K}(\mathbb{R}))$ satisfying  $$Q_n(x,h(x))\oplus S_n(x)=h(L_n(x)),$$ where $x\in I, n\in \Sigma_N.$

\end{theorem}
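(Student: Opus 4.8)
Here is the plan I would follow; it splits into producing a \emph{continuous} solution of the fixed‑point relation and then upgrading its regularity to bounded variation.

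\textbf{Step 1: a continuous solution.} Put $\Omega_n(x,Y)=Q_n(x,Y)\oplus S_n(x)$ as in \eqref{eq:b}. Since $S_n(x)$ is independent of the second slot, Lemma~\ref{11} gives $\mathfrak{H}\big(Q_n(x,Y)\oplus S_n(x),\,Q_n(x,Y_*)\oplus S_n(x)\big)=\mathfrak{H}\big(Q_n(x,Y),Q_n(x,Y_*)\big)\le\phi_n(\mathfrak{H}(Y,Y_*))$, and the end‑point condition for $\Omega_n$ is exactly the assumed one for $Q_n,S_n$. Hence the Corollary following Theorem~\ref{th3.1} applies verbatim and yields a unique continuous $h\colon I\to\mathcal K(\mathbb R)$ with $Q_n(x,h(x))\oplus S_n(x)=h(L_n(x))$ for all $x\in I$, $n\in\Sigma_N$; uniqueness in $\mathcal C(I,\mathcal K(\mathbb R))$ gives uniqueness a fortiori inside the smaller class of continuous BV maps. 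It remains to prove $h\in\mathcal{BV}(I,\mathcal K(\mathbb R))$.

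\textbf{Step 2: $\mathcal R$ preserves $\mathcal{BV}_*$.} Consider the Read--Bajraktarevi\'c operator $\mathcal Rg(x):=Q_n\big(L_n^{-1}(x),g(L_n^{-1}(x))\big)\oplus S_n\big(L_n^{-1}(x)\big)$ for $x\in I_n$, on the set $\mathcal{BV}_*(I,\mathcal K(\mathbb R))$ of BV maps attaining $Y_0,Y_N$ at $x_0,x_N$; this is a $d_C$‑closed, hence $d_{BV}$‑closed, subspace of $\mathcal{BV}(I,\mathcal K(\mathbb R))$, so it is complete by the completeness of $\big(\mathcal{BV}(I,\mathcal K(\mathbb R)),d_{BV}\big)$ established above. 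Using additivity of total variation over adjacent sub‑intervals and its invariance under the affine reparametrisations $L_n^{-1}$, $V_I(\mathcal Rg)=\sum_{n\in\Sigma_N}V_I\big(u\mapsto Q_n(u,g(u))\oplus S_n(u)\big)$; splitting each $\oplus$‑increment by Lemma~\ref{le1}, inserting a matched‑point term $Q_n(u_j,g(u_{j-1}))$ and bounding by $\phi_n(\mathfrak{H}(g(u_j),g(u_{j-1})))+q_n|u_j-u_{j-1}|\,\mathfrak{H}(g(u_{j-1}),\{0\})$, and then using $\phi_n(t)\le t$ together with $\sum_j|u_j-u_{j-1}|\le|I|/\min_n|a_n|$, one gets $V_I(\mathcal Rg)\le N c_\phi\,V_I(g)+\big(\text{finite terms in }\|g\|_C,|I|,V_I(S_n)\big)$ with $c_\phi<1$ forced by the hypothesis; hence $\mathcal Rg\in\mathcal{BV}$, with the prescribed end‑point values by construction.

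\textbf{Step 3: the contraction estimate, and the obstacle.} I would then try to show $\mathcal R$ is a Rakotch contraction for $d_{BV}$. Write $d_{BV}(\mathcal Rg,\mathcal Rg_*)=d_C(\mathcal Rg,\mathcal Rg_*)+\sup_\chi\sum_i\mathfrak{H}\big(\mathcal Rg(x_i)\oplus\mathcal Rg_*(x_{i-1}),\,\mathcal Rg_*(x_i)\oplus\mathcal Rg(x_{i-1})\big)$. The first term is $\le\max_n\phi_n(d_C(g,g_*))$ exactly as in the proof of Theorem~\ref{th3.1}. For the second, refine $\chi$ to contain the knots $x_0,\dots,x_N$ so each consecutive pair lies in one $I_n$; here the hypothesis that $Q_n,S_n$ are interval‑valued is essential, since then every set occurring is a compact interval, the metric sum is \emph{associative} (associativity of $\oplus$ on $\mathcal K_c(\mathbb R)$ proved above), and we may regroup the four‑term metric sums and delete the common $S_n(v_i)\oplus S_n(v_{i-1})$ blocks via Lemma~\ref{11}, with $v_j=L_n^{-1}(x_j)$. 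What is left is $\mathfrak{H}\big(Q_n(v_i,g(v_i))\oplus Q_n(v_{i-1},g_*(v_{i-1})),\,Q_n(v_i,g_*(v_i))\oplus Q_n(v_{i-1},g(v_{i-1}))\big)$; inserting the matched‑point terms $Q_n(v_i,g(v_i))\oplus Q_n(v_i,g_*(v_{i-1}))$ and $Q_n(v_i,g_*(v_i))\oplus Q_n(v_i,g(v_{i-1}))$, applying Lemma~\ref{11}, folding each matched pair into one value through $Q_n(v_i,Y)\oplus Q_n(v_i,Y_*)=Q_n(v_i,Y\oplus Y_*)$ (giving a $\phi_n$ of the mixed‑variation increment of $(g,g_*)$), and estimating the residual spatial displacements by $\mathfrak{H}\big(Q_n(v_i,Y),Q_n(v_{i-1},Y)\big)\le q_n|v_i-v_{i-1}|\mathfrak{H}(Y,\{0\})$ with $|v_i-v_{i-1}|=|x_i-x_{i-1}|/a_n$, summation over $i$ and $n$ produces at most $N$ further copies of $\max_n\phi_n$ plus the term $\tfrac{N\max_n q_n|I|}{\min_n|a_n|}\,d_{BV}(g,g_*)$. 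With the $d_C$‑contribution this targets $d_{BV}(\mathcal Rg,\mathcal Rg_*)\le\psi(d_{BV}(g,g_*))$, $\psi(t)=(N+1)\max_n\phi_n(t)+\tfrac{N\max_n q_n|I|}{\min_n|a_n|}t$; $\psi$ is increasing and $\psi(t)/t$ is the stated quantity, which is $<1$ and, being a maximum of decreasing functions plus a constant, decreasing — so by Theorem~\ref{n1} and Theorem~\ref{RK} $\mathcal R$ has a unique fixed point in $\mathcal{BV}_*$, which solves the same relation as $h$ and hence equals $h$.

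\textbf{Where the difficulty sits.} The delicate point is precisely this last bookkeeping: the non‑associativity of the metric sum in general is what forces the interval‑valued hypothesis (so the associativity statement above can peel off the $S_n$‑blocks), and the residual $q_n$‑Lipschitz pieces a priori carry $\|g\|_C,\|g_*\|_C$ rather than $d_{BV}(g,g_*)$, so matching them to the stated coefficient $\tfrac{N\max_n q_n|I|}{\min_n|a_n|}$ — rather than only the cruder $N$ in front of $\max_n\phi_n$ that $\phi_n(t)\le t$ would give — is exactly the step that needs the smallness condition as stated (possibly by first confining $\mathcal R$ to an $\mathcal R$‑invariant $d_{BV}$‑ball). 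A robust fallback that still yields $h\in\mathcal{BV}$: iterate $g_{k+1}=\mathcal Rg_k$ from any $g_0\in\mathcal{BV}_*$; the iterates converge to $h$ uniformly by Step~1 (so $\sup_k\|g_k\|_C<\infty$), have uniformly bounded total variation by the estimate of Step~2, and then $h\in\mathcal{BV}$ follows from the lower semicontinuity of total variation under uniform convergence recorded in Lemma~\ref{lem2}.
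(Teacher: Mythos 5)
Your proposal is correct and follows essentially the same route as the paper: the Read--Bajraktarevi\'c operator on the complete space $\mathcal{BV}_*(I,\mathcal{K}(\mathbb{R}))$, a well-definedness estimate for $V_I(\mathcal{R}g)$, and then the contraction estimate obtained by refining the partition, peeling off the $S_n$-blocks via associativity of $\oplus$ on intervals and Lemma \ref{11}, folding with $Q_n(x,Y)\oplus Q_n(x,Y_*)=Q_n(x,Y\oplus Y_*)$, and bounding the spatial displacement by the $q_n$-Lipschitz hypothesis, ending with the same factor $(N+1)\frac{\max_{n}\phi_n(t)}{t}+\frac{N\max_n q_n|I|}{\min_n|a_n|}$ and an appeal to the Rakotch theorem. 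The ``obstacle'' you flag in the last paragraph is not actually one (so the fallback via Lemma \ref{lem2} is unnecessary): after the folding step the argument of the residual $q_n$-term is $(g\oplus(-g_*))(v_{i-1})$, and $\mathfrak{H}\big((g\oplus(-g_*))(y),\{0\}\big)=\mathfrak{H}(g(y),g_*(y))\le d_C(g,g_*)\le d_{BV}(g,g_*)$, which is precisely the bound the paper uses to land on the stated coefficient.
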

\begin{proof}
Let $\mathcal{BV}_*(I,\mathcal{K}(\mathbb{R})):=\{g\in\mathcal{BV}(I,\mathcal{K}(\mathbb{R})): g(x_0)=Y_0,~g(x_N)=Y_N \}.$ Then $\mathcal{BV}_*(I,\mathcal{K}(\mathbb{R}))$ is a closed subset of $(\mathcal{BV}(I,\mathcal{K}(\mathbb{R})),d_{BV}),$ and is complete metric space. Define RB operator $\mathcal{R}:\mathcal{BV}_*(I,\mathcal{K}(\mathbb{R}))\to\mathcal{BV}_*(I,\mathcal{K}(\mathbb{R}))$ as follows:
  \begin{equation*}
      \mathcal{R}g(x):=Q_n(L_n^{-1}(x),g(L_n^{-1}(x)))\oplus S_n(L_n^{-1}(x)),
  \end{equation*}
  for all $x\in I_n, \ n\in \Sigma_N.$ Clearly, $\mathcal{R}g(x_n)=Y_n,$ for all $n\in \Sigma_{0,N}.$
 Now, we show that $\mathcal{R}$ is well-defined on $\mathcal{BV}_*(I,\mathcal{K}(\mathbb{R})).$ Assume for $n\in \Sigma_N, ~\chi_n=(x_0,x_1,\ldots,x_l)$ be the partition of the interval $I_n$ with $x_0<x_1<\cdots <x_l.$ Then for any $g\in \mathcal{BV}_*(I,\mathcal{K}(\mathbb{R})),$ consider
 \begin{align*}
     &\sum_{i=1}^l\mathfrak{H}(\mathcal{R}g(x_i),\mathcal{R}g(x_{i-1}))\\
     =&\sum_{i=1}^l\mathfrak{H}\Big(Q_n(L_n^{-1}(x_i),g(L_n^{-1}(x_i)))\oplus S_n(L_n^{-1}(x_i)),Q_n(L_n^{-1}(x_{i-1}),\\
     &\quad g(L_n^{-1}(x_{i-1})))\oplus S_n(L_n^{-1}(x_{i-1}))\Big)\\
     \le&\sum_{i=1}^l\mathfrak{H}\Big(S_n(L_n^{-1}(x_i)),S_n(L_n^{-1}(x_{i-1}))\Big)+\\
     &\quad\sum_{i=1}^l\mathfrak{H}\Big(Q_n(L_n^{-1}(x_i),g(L_n^{-1}(x_i))),Q_n(L_n^{-1}(x_{i-1}),g(L_n^{-1}(x_{i-1})))\Big)\\
     \le& V_I(S_n)+\sum_{i=1}^l\mathfrak{H}\Big(Q_n(L_n^{-1}(x_i),g(L_n^{-1}(x_i))),Q_n(L_n^{-1}(x_{i}),g(L_n^{-1}(x_{i-1})))\Big)+\\
     &\quad\sum_{i=1}^l\mathfrak{H}\Big(Q_n(L_n^{-1}(x_i),g(L_n^{-1}(x_{i-1}))),Q_n(L_n^{-1}(x_{i-1}),g(L_n^{-1}(x_{i-1})))\Big)\\
     \le& \max_{n\in \Sigma_N}V_I(S_n)+\sum_{i=1}^l\phi_n\Big(\mathfrak{H}(g(L_n^{-1}(x_i)),g(L_n^{-1}(x_{i-1})))\Big)+\\
     &\quad\sum_{i=1}^l q_n|L_n^{-1}(x_{i})-L_n^{-1}(x_{i-1})|.\mathfrak{H}(g(L_n^{-1}(x_{i-1})),\{0\})\\
     \le& \max_{n\in \Sigma_N}V_I(S_n)+\sum_{i=1}^l\mathfrak{H}(g(L_n^{-1}(x_i)),g(L_n^{-1}(x_{i-1})))+ \frac{\max_{n\in\Sigma_N}q_n}{\min_{n\in\Sigma_N}|a_n|}|I|.\|g\|_{BV}\\
      \le& \max_{n\in \Sigma_N}V_I(S_n)+V_I(g)+ \frac{\max_{n\in\Sigma_N}q_n}{\min_{n\in\Sigma_N}|a_n|}|I|.\|g\|_{BV}<\infty,\\
 \end{align*}
where $|I|$ is the diameter of $I,$ and $\|g\|_{BV}=d_{BV}(g,\bold{0}),$ ($\bold{0}$ is the zero function). The previous estimation is valid for any partition $\chi_n$ of the sub-interval $I_n,$ for all $n\in \Sigma_N.$ Consequently, 
\begin{align*}
    V_I(\mathcal{R}g)\le N\Big(\max_{n\in \Sigma_N}V_I(S_n)+V_I(g)+ \frac{\max_{n\in\Sigma_N}q_n}{\min_{n\in\Sigma_N}|a_n|}|I|.\|g\|_{BV}\Big)<\infty.
\end{align*}
This claims that $\mathcal{R}$ is well-defined on $\mathcal{BV}_*(I,\mathcal{K}(\mathbb{R})).$ Next, to show $\mathcal{R}$ is a Rakotch contraction on $\mathcal{BV}_*(I,\mathcal{K}(\mathbb{R})).$ Consider $g,g_*\in \mathcal{BV}_*(I,\mathcal{K}(\mathbb{R})),$ in view of Note \ref{note1} and Lemma \ref{le1}, we get
\begin{align*}
     &\sum_{i=1}^l\mathfrak{H}\Big(\mathcal{R}g(x_i)\oplus \mathcal{R}g_*(x_{i-1}),\mathcal{R}g_*(x_{i})\oplus \mathcal{R}g(x_{i-1})\Big)\\
      =&\sum_{i=1}^l\mathfrak{H}\Big(Q_n(L_n^{-1}(x_i),g(L_n^{-1}(x_i)))\oplus S_n(L_n^{-1}(x_i))\oplus Q_n(L_n^{-1}(x_{i-1}),g_*(L_n^{-1}(x_{i-1})))\\
      &\quad \oplus S_n(L_n^{-1}(x_{i-1})),Q_n(L_n^{-1}(x_i),g_*(L_n^{-1}(x_i)))\oplus S_n(L_n^{-1}(x_{i}))\oplus \\
      &\quad Q_n(L_n^{-1}(x_{i-1}),g(L_n^{-1}(x_{i-1})))\oplus S_n(L_n^{-1}(x_{i-1}))\Big).
\end{align*}
In view of associativity and commutativity with respect to metric sum, we get
\begin{align*}
     &\sum_{i=1}^l\mathfrak{H}\Big(\mathcal{R}g(x_i)\oplus \mathcal{R}g_*(x_{i-1}),\mathcal{R}g_*(x_{i})\oplus \mathcal{R}g(x_{i-1})\Big)\\
      =&\sum_{i=1}^l\mathfrak{H}\Big(Q_n(L_n^{-1}(x_i),g(L_n^{-1}(x_i)))\oplus Q_n(L_n^{-1}(x_{i-1}),g_*(L_n^{-1}(x_{i-1}))),\\
      &\quad Q_n(L_n^{-1}(x_{i-1}),g(L_n^{-1}(x_{i-1})))\oplus Q_n(L_n^{-1}(x_i),g_*(L_n^{-1}(x_i)))\Big)\\
      =&\sum_{i=1}^l\mathfrak{H}\Big(Q_n(L_n^{-1}(x_i),g(L_n^{-1}(x_i)))\oplus Q_n(L_n^{-1}(x_{i-1}),g_*(L_n^{-1}(x_{i-1})))\oplus \\
      &\quad (-Q_n(L_n^{-1}(x_i),g_*(L_n^{-1}(x_i)))),Q_n(L_n^{-1}(x_{i-1}),g(L_n^{-1}(x_{i-1})))\Big)\\
       =&\sum_{i=1}^l\mathfrak{H}\Big(Q_n(L_n^{-1}(x_i),(g\oplus(-g_*))(L_n^{-1}(x_i)))\oplus Q_n(L_n^{-1}(x_{i-1}),g_*(L_n^{-1}(x_{i-1}))),\\
      &\quad Q_n(L_n^{-1}(x_{i-1}),g(L_n^{-1}(x_{i-1})))\Big)\\
        =&\sum_{i=1}^l\mathfrak{H}\Big(Q_n(L_n^{-1}(x_i),(g\oplus(-g_*))(L_n^{-1}(x_i))),\\
      &\quad Q_n(L_n^{-1}(x_{i-1}),g(L_n^{-1}(x_{i-1})))\oplus(-Q_n(L_n^{-1}(x_{i-1}),g_*(L_n^{-1}(x_{i-1}))))\Big)\\
  =&\sum_{i=1}^l\mathfrak{H}\Big(Q_n(L_n^{-1}(x_i),(g\oplus(-g_*))(L_n^{-1}(x_i))),\\
      &\quad Q_n(L_n^{-1}(x_{i-1}),(g\oplus(-g_*))(L_n^{-1}(x_{i-1})))\Big)\\
        \le&\sum_{i=1}^l\mathfrak{H}\Big(Q_n(L_n^{-1}(x_i),(g\oplus(-g_*))(L_n^{-1}(x_i))),Q_n(L_n^{-1}(x_{i}),\\
      &\quad (g\oplus(-g_*))(L_n^{-1}(x_{i-1})))\Big)+\sum_{i=1}^l\mathfrak{H}\Big(Q_n(L_n^{-1}(x_i),(g\oplus(-g_*))\\
      &\quad (L_n^{-1}(x_{i-1}))),Q_n(L_n^{-1}(x_{i-1}),(g\oplus(-g_*))(L_n^{-1}(x_{i-1})))\Big)\\\le&\sum_{i=1}^l\phi_n\Bigg(\mathfrak{H}\Big((g\oplus(-g_*))(L_n^{-1}(x_i)),(g\oplus(-g_*))(L_n^{-1}(x_{i-1}))\Big)\Bigg)+\\
      &\quad \sum_{i=1}^lq_n|L_n^{-1}(x_i)-L_n^{-1}(x_{i-1})|\mathfrak{H}\Big((g\oplus(-g_*))(L_n^{-1}(x_{i-1})),\{0\}\Big)\\
       \le&\phi_n\Bigg(\sum_{i=1}^l\mathfrak{H}\Big((g\oplus(-g_*))(L_n^{-1}(x_i)),(g\oplus(-g_*))(L_n^{-1}(x_{i-1}))\Big)\Bigg)+\\
      &\quad \frac{\max_{n\in \Sigma_
N}q_n|I|d_{BV}(g,g_*)}{\min_{n\in \Sigma_
N}|a_n|}\\
\le&\phi_n\Bigg(\sum_{i=1}^l\mathfrak{H}\Big(g(L_n^{-1}(x_i))\oplus g_*(L_n^{-1}(x_{i-1})),g(L_n^{-1}(x_{i-1}))\oplus g_*(L_n^{-1}(x_i))\Big)\Bigg)+\\
      &\quad \frac{\max_{n\in \Sigma_
N}q_n|I|d_{BV}(g,g_*)}{\min_{n\in \Sigma_
N}|a_n|}\\
\le&\max_{n\in \Sigma_
N}\phi_n\Big(d_{BV}(g,g_*)\Big)+\frac{\max_{n\in \Sigma_
N}q_n|I|d_{BV}(g,g_*)}{\min_{n\in \Sigma_
N}|a_n|}.
  \end{align*}
  It is noted that the previous estimation holds for any partition $\chi_n=(x_0,x_1,\ldots,x_l)$ of $I_n,$ where $n\in\Sigma_N.$ Thus,
  \begin{align*}
      &d_{BV}(\mathcal{R}g,\mathcal{R}g_*)\\
=&d_C(\mathcal{R}g,\mathcal{R}g_*)+\sup_{\chi}\sum_{i=1}^n\mathfrak{H}(\mathcal{R}g(x_i)\oplus \mathcal{R}g_*(x_{i-1}),\mathcal{R}g_*(x_{i})\oplus \mathcal{R}g(x_{i-1}))\\
\le&\max_{n\in \Sigma_
N}\phi_n\Big(d_{C}(g,g_*)\Big)+N.\max_{n\in \Sigma_
N}\phi_n\Big(d_{BV}(g,g_*)\Big)+\frac{\max_{n\in \Sigma_
N}q_nN|I|d_{BV}(g,g_*)}{\min_{n\in \Sigma_
N}|a_n|}\\
\le&\max_{n\in \Sigma_
N}\phi_n\Big(d_{BV}(g,g_*)\Big)+N.\max_{n\in \Sigma_
N}\phi_n\Big(d_{BV}(g,g_*)\Big)+\frac{\max_{n\in \Sigma_
N}q_nN|I|d_{BV}(g,g_*)}{\min_{n\in \Sigma_
N}|a_n|}\\
\le&(N+1).\max_{n\in \Sigma_
N}\phi_n\Big(d_{BV}(g,g_*)\Big)+\frac{\max_{n\in \Sigma_
N}q_nN|I|d_{BV}(g,g_*)}{\min_{n\in \Sigma_
N}|a_n|}\\
\le&\Bigg\{(N+1)\frac{\max_{n\in \Sigma_
N}\phi_n\Big(d_{BV}(g,g_*)\Big)}{d_{BV}(g,g_*)}+\frac{\max_{n\in \Sigma_
N}q_nN|I|}{\min_{n\in \Sigma_
N}|a_n|}\Bigg\}d_{BV}(g,g_*).
 \end{align*}
 Now, define $\Theta:\mathbb{R}_+\rightarrow\mathbb{R}_+$ as $\Theta(t):=ct,$ where $$c=(N+1)\frac{\max_{n\in \Sigma_N}\phi_n\Big(d_{BV}(g,g_*)\Big)}{d_{BV}(g,g_*)}+\frac{\max_{n\in \Sigma_N}q_nN|I|}{\min_{n\in \Sigma_
N}|a_n|}.$$
Under the given assumptions, $c<1$ and so $\mathcal{R}$ is a Rakotch contraction on $\mathcal{BV}_*(I,\mathcal{K}(\mathbb{R})).$ Using Theorem \ref{RK}, there exists a unique function $h\in \mathcal{BV}_*(I,\mathcal{K}(\mathbb{R}))$ satisfying $\mathcal{R}h=h,$ concluding the proof. 
\end{proof}

For $0< \sigma \le 1$, the H\"{o}lder space is defined as follows:
\begin{align*}
    \mathcal{HC}^{\sigma}(I, \mathcal{K}( \mathbb{R})  ) := \Big\{&f:I \rightarrow \mathcal{K}( \mathbb{R}): \text{ for some } C_f>0,~\\
    &\mathfrak{H}(f(x),f(y)) \le C_f |x -y|^\sigma,~\forall~x,y \in I\Big\},
\end{align*}
   We now define the following metric using metric linear sum:
\[ d_{{HC}^\sigma}(f,g):= d_C(f,g)+ \sup_{\substack{x,y \in I\\x\ne y}} \frac{\mathfrak{H}\big(f(x) \oplus g(y),g(x)\oplus f(y)\big)}{\lvert x-y \rvert^{\sigma}}.\]
For notation simplicity, $[g]_\sigma=\sup_{\substack{x,y \in I\\x\ne y}} \frac{\mathfrak{H}\big( g(x),g(y)\big)}{\lvert x-y \rvert^{\sigma}}$ and $\|g\|_{{HC}^\sigma}=d_{{HC}^\sigma}(g,\bold{0}),$ ($\bold{0}$ is the zero function.) We aim to show that, with this new metric, $\mathcal{HC}^{\sigma}(I, \mathcal{K}( \mathbb{R})  )$  is complete and not separable. The details about the basic concept of real analysis can be studied in \cite{Rudin}.
\begin{theorem}
The space $(\mathcal{HC}^{\sigma}(I, \mathcal{K}( \mathbb{R})  ), d_{{HC}^\sigma})$ is complete and nonseparable.
\end{theorem}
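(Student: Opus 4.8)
The plan is to treat the two claims separately. Completeness will follow the same template as the $\mathcal{BV}$ theorem proved above, with Lemma~\ref{11} (cancellation of a common summand, $\mathfrak{H}(B\oplus A,C\oplus A)=\mathfrak{H}(A\oplus B,A\oplus C)=\mathfrak{H}(B,C)$, using commutativity from Note~\ref{note1}) and Lemma~\ref{le1} (so that $\oplus$ is jointly $\mathfrak{H}$-continuous) doing the work; completeness of $(\mathcal{C}(I,\mathcal{K}(\mathbb{R})),d_C)$ is available. Nonseparability will be obtained by exhibiting an uncountable subset whose points are pairwise at distance at least a fixed positive constant.

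For completeness, I would start from a $d_{{HC}^\sigma}$-Cauchy sequence $\{f_k\}$. Since $d_C\le d_{{HC}^\sigma}$, it is $d_C$-Cauchy, hence $f_k\to f$ uniformly for some continuous $f$. To see $f\in\mathcal{HC}^\sigma$, I would fix $\epsilon>0$, pick $k_0$ with $d_{{HC}^\sigma}(f_k,f_m)<\epsilon$ for $k,m\ge k_0$, and for $k\ge k_0$ estimate, via Lemma~\ref{11} to cancel $f_k$ and the triangle inequality for $\mathfrak{H}$,
\[
\mathfrak{H}(f(x),f(y))=\mathfrak{H}\big(f(x)\oplus f_k(y),f(y)\oplus f_k(y)\big)\le\mathfrak{H}\big(f(x)\oplus f_k(y),f(y)\oplus f_k(x)\big)+\mathfrak{H}\big(f_k(x),f_k(y)\big).
\]
The first term on the right is $\le\epsilon|x-y|^\sigma$ after letting $m\to\infty$ in $\mathfrak{H}(f_k(x)\oplus f_m(y),f_m(x)\oplus f_k(y))\le\epsilon|x-y|^\sigma$ (here Lemma~\ref{le1} permits moving the limit inside $\oplus$), and the second is $\le C_{f_k}|x-y|^\sigma$; hence $f\in\mathcal{HC}^\sigma$ with constant $\epsilon+C_{f_k}$. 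The same limiting step shows the seminorm part of $d_{{HC}^\sigma}(f_k,f)$ is $\le\epsilon$ for $k\ge k_0$, which with $d_C(f_k,f)\to 0$ gives $d_{{HC}^\sigma}(f_k,f)\to 0$.

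For nonseparability, I would take, for each $t\in I$, the singleton-valued map $F_t(x):=\{\,|x-t|^\sigma\,\}$. Using subadditivity of $u\mapsto u^\sigma$ on $\mathbb{R}_+$ one gets $\mathfrak{H}(F_t(x),F_t(y))=\big|\,|x-t|^\sigma-|y-t|^\sigma\,\big|\le|x-y|^\sigma$, so $F_t\in\mathcal{HC}^\sigma(I,\mathcal{K}(\mathbb{R}))$. Since $\{p\}\oplus\{q\}=\{p+q\}$, evaluating the seminorm of $d_{{HC}^\sigma}(F_t,F_s)$ at $x=t,\ y=s$ gives $\mathfrak{H}\big(F_t(t)\oplus F_s(s),\,F_s(t)\oplus F_t(s)\big)=\mathfrak{H}\big(\{0\},\{2|t-s|^\sigma\}\big)=2|t-s|^\sigma$, whence $d_{{HC}^\sigma}(F_t,F_s)\ge 2$ for $s\ne t$. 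Thus $\{F_t:t\in I\}$ is an uncountable $2$-separated subset of $\mathcal{HC}^\sigma(I,\mathcal{K}(\mathbb{R}))$, so no countable set can be dense and the space is nonseparable.

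The step I expect to be the main obstacle is passing the Hölder property to the uniform limit: because $\oplus$ is entangled in the seminorm of $d_{{HC}^\sigma}$, one cannot directly conclude that $f$ is $\sigma$-Hölder, and the resolution is precisely the cancellation identity of Lemma~\ref{11} together with Lemma~\ref{le1} to commute the limit with $\oplus$ — the same mechanism used in the $\mathcal{BV}$ completeness proof. Everything else (including verifying the metric axioms for $d_{{HC}^\sigma}$, which is implicit in the statement and proceeds along the same lines) is routine.
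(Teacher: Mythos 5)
Your proposal is correct and follows essentially the same route as the paper: reduce to $d_C$-completeness, pass the Cauchy bound on the $\oplus$-seminorm to the limit via the continuity of $\oplus$ (Lemma~\ref{le1}) and the cancellation identity (Lemma~\ref{11}), and exhibit an uncountable uniformly separated family for nonseparability. The only (immaterial) differences are that you verify $f\in\mathcal{HC}^\sigma$ by the cancellation trick from the $\mathcal{BV}$ proof rather than by boundedness of the Cauchy sequence, and your separated family $F_t(x)=\{|x-t|^\sigma\}$ replaces the paper's one-sided truncation of the same function.
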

\begin{proof}
Let $(f_k)$ be a Cauchy sequence in $\mathcal{HC}^{\sigma}(I, \mathcal{K}( \mathbb{R})).$ Thus, for $\epsilon>0,$ there exists $k_0\in\mathbb{N}$ such that
$$d_{{HC}^\sigma}(f_k,f_l)<\epsilon,~~ \text{ for all } k,l\ge k_0.$$ This implies $(f_k)$ is a Cauchy sequence in $(\mathcal{C}(I, \mathcal{K}( \mathbb{R})),d_C),$ which is a complete metric space. Thus, there exists $f\in\mathcal{C}(I, \mathcal{K}( \mathbb{R}))$ with $d_C(f_k,f)\rightarrow0$ as $k\rightarrow\infty.$ Now, our aim is to show that $f\in \mathcal{HC}^{\sigma}(I, \mathcal{K}( \mathbb{R})  )$ and $d_{{HC}^\sigma}(f_k,f)\rightarrow0$ as $k\rightarrow\infty.$ Let $x,y\in I$ with $x\ne y,$ using the fact that every Cauchy sequence is bounded, we notice that
\begin{align*}
    \frac{\mathfrak{H}(f(x),f(y))}{|x-y|^\sigma}&=\lim_{k\to\infty}\frac{\mathfrak{H}(f_k(x),f_k(y))}{|x-y|^\sigma}\\&\le\lim_{k\to\infty}\Big(\sup_{x\in I}\mathfrak{H}(f_k(x),\{0\})+\sup_{\substack{x,y\in I\\x\ne y}}\frac{\mathfrak{H}(f_k(x),f_k(y))}{|x-y|^\sigma}\Big)\\
    &=\lim_{k\to\infty}\|f_k\|<\infty,
\end{align*}
claiming that $f\in\mathcal{HC}^{\sigma}(I, \mathcal{K}( \mathbb{R})). $ Again, 
\begin{align*}
   \frac{\mathfrak{H}(f_k(x)\oplus f(y),f_k(y)\oplus f(x))}{|x-y|^\sigma}
   &=\lim_{l\rightarrow\infty}\frac{\mathfrak{H}(f_k(x)\oplus f_l(y),f_k(y)\oplus f_l(x))}{|x-y|^\sigma}\\
   &\le \limsup_{l\rightarrow\infty}d_{{HC}^\sigma}(f_k,f_l) \le \epsilon,
   \end{align*}
   for every $k\ge k_0$ and the inequality bound is independent from $x \ne y \in I.$ It follows that $f_k$ converges to $f$ in $\mathcal{HC}^{\sigma}(I, \mathcal{K}( \mathbb{R})  )$. Therefore, $\mathcal{HC}^{\sigma}(I, \mathcal{K}( \mathbb{R})  )$ is complete.
Now, for each $t \in (a,b)$, we define the function $f_t: I \to \mathcal{K}( \mathbb{R})$ as 
    
   \[f_t(x)=\begin{cases}
  \{0\}, & \text{if } a\le x < t,\\\\
 \{ |x-t|^{\sigma}\}, & \text{if } t\le  x \le b.
\end{cases}\]
Now, for $t,s \in (a,b)$,  with $t<s$ we have
   \begin{equation*}
       \begin{aligned}
           \sup_{\substack{x,y\in I\\ x\ne y}}\frac{\mathfrak{H}\big(f_t(x) \oplus f_s(y),f_s(x)\oplus f_t(y)\big)}{\lvert x-y \rvert^{\sigma}}& \ge  \frac{\mathfrak{H}(f_t(t) \oplus f_s(s),f_s(t)\oplus f_t(s))}{(s-t)^\sigma}\\ &=\frac{\mathfrak{H}(\{0\} \oplus \{0\},\{0\}\oplus f_t(s))}{(s-t)^\sigma}\\ &=\frac{\mathfrak{H}( \{0\}, \{|s-t|^\sigma\})}{(s-t)^\sigma}= 1.
       \end{aligned}
   \end{equation*}
   Consequently, $d_{{HC}^\sigma}(f_t, f_s) \ge 1$ for distinct $t,s \in (a,b)$ . The collection of such functions is uncountable. Now, the result for nonseparable follows from a standard real-analysis technique.
\end{proof}
We believe that our next theorem perhaps will be beneficial to the researchers working in the area of PDEs and differential inclusions. 
\begin{theorem}
For $0<\sigma_1 < \sigma_2 \le 1$, the inclusion operator $T: (\mathcal{HC}^{\sigma_2}(I, \mathcal{K}( \mathbb{R})), d_{{HC}^{\sigma_2}})\\ \rightarrow (\mathcal{HC}^{\sigma_1}(I, \mathcal{K}( \mathbb{R})  ), d_{{HC}^{\sigma_1}}) $ defined as $T(f)=f$ is continuous, and the image of a bounded set in $\mathcal{HC}^{\sigma_2}(I, \mathcal{K}( \mathbb{R}) )$ under $T$ is relatively compact. Furthermore, $\mathcal{HC}^{\sigma_2}(I, \mathcal{K}( \mathbb{R}))$ is not dense in $\mathcal{HC}^{\sigma_1}(I, \mathcal{K}( \mathbb{R}))$.
\end{theorem}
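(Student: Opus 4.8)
The assertion bundles three independent claims, which I would settle in order of increasing difficulty; the last two are set-valued analogues of the classical facts that $\mathcal{HC}^{\sigma_2}$ embeds compactly into $\mathcal{HC}^{\sigma_1}$ and is not dense there.

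\textbf{Continuity of $T$.} Since $T$ is the identity map, it suffices to produce a constant $C=C(\sigma_1,\sigma_2,|I|)$ with $d_{{HC}^{\sigma_1}}(f,g)\le C\,d_{{HC}^{\sigma_2}}(f,g)$ for all $f,g$. The $d_C$-summand is common to both metrics, so only the H\"older seminorm part matters: for $x\ne y$ in $I$,
$$\frac{\mathfrak{H}\big(f(x)\oplus g(y),g(x)\oplus f(y)\big)}{|x-y|^{\sigma_1}}=\frac{\mathfrak{H}\big(f(x)\oplus g(y),g(x)\oplus f(y)\big)}{|x-y|^{\sigma_2}}\,|x-y|^{\sigma_2-\sigma_1}\le|I|^{\sigma_2-\sigma_1}\,\frac{\mathfrak{H}\big(f(x)\oplus g(y),g(x)\oplus f(y)\big)}{|x-y|^{\sigma_2}},$$
using $|x-y|\le|I|$ and $\sigma_2-\sigma_1>0.$ Taking the supremum over $x\ne y$ gives $d_{{HC}^{\sigma_1}}(f,g)\le\big(1+|I|^{\sigma_2-\sigma_1}\big)\,d_{{HC}^{\sigma_2}}(f,g)$, so $T$ is Lipschitz, hence continuous.

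\textbf{Relative compactness of $T(S)$ for bounded $S$.} Suppose $\|f\|_{{HC}^{\sigma_2}}\le M$ for all $f\in S$; I must show every sequence $(f_k)\subset S$ has a subsequence converging in $d_{{HC}^{\sigma_1}}$. Since $g\oplus\{0\}=g$, one has $\|g\|_{{HC}^{\sigma_2}}=\sup_{x\in I}\mathfrak{H}(g(x),\{0\})+[g]_{\sigma_2}$, so the bound forces $f(x)\subset[-M,M]$ for all $x$ and $\mathfrak{H}(f(x),f(y))\le M|x-y|^{\sigma_2}.$ Thus $S$ is uniformly bounded and equicontinuous as a family of maps into the compact metric space $\big(\{A\in\mathcal{K}(\mathbb{R}):A\subset[-M,M]\},\mathfrak{H}\big)$ (the Hausdorff hyperspace over a compact set is compact), and Arzel\`a--Ascoli yields a subsequence $g_j:=f_{k_j}$ and $f\in\mathcal{C}(I,\mathcal{K}(\mathbb{R}))$ with $d_C(g_j,f)\to0$; letting $j\to\infty$ in $\mathfrak{H}(g_j(x),g_j(y))\le M|x-y|^{\sigma_2}$ shows $[f]_{\sigma_2}\le M$, hence $f\in\mathcal{HC}^{\sigma_2}\subset\mathcal{HC}^{\sigma_1}.$ It remains to upgrade $d_C$-convergence to $d_{{HC}^{\sigma_1}}$-convergence. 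Fix $\delta>0$ and split the supremum in the seminorm part of $d_{{HC}^{\sigma_1}}(g_j,f)$: for $|x-y|\ge\delta$, Lemma~\ref{le1} gives $\mathfrak{H}(g_j(x)\oplus f(y),f(x)\oplus g_j(y))\le 2\,d_C(g_j,f)$, contributing $\le 2\,d_C(g_j,f)\,\delta^{-\sigma_1}$; for $|x-y|<\delta$, two applications of Lemma~\ref{11} with commutativity give $\mathfrak{H}(g_j(x)\oplus f(y),f(x)\oplus g_j(y))\le\mathfrak{H}(g_j(x),g_j(y))+\mathfrak{H}(f(x),f(y))\le 2M|x-y|^{\sigma_2}$, contributing $\le 2M\,\delta^{\sigma_2-\sigma_1}$. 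Hence this seminorm part is $\le 2\,d_C(g_j,f)\,\delta^{-\sigma_1}+2M\,\delta^{\sigma_2-\sigma_1}$; choosing $\delta$ small and then $j$ large makes it as small as desired, so $d_{{HC}^{\sigma_1}}(g_j,f)\to0$ and $T(S)$ is relatively compact. I expect this interpolation step, together with the standard but slightly delicate set-valued Arzel\`a--Ascoli argument it rests on, to be the main obstacle.

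\textbf{Non-density.} Fix an interior point $t_0\in(a,b)$ and set $f(x):=\{|x-t_0|^{\sigma_1}\}$; from $|s^{\sigma_1}-t^{\sigma_1}|\le|s-t|^{\sigma_1}$ for $s,t\ge 0$ and $0<\sigma_1\le1$, together with $\big||x-t_0|-|y-t_0|\big|\le|x-y|$, one gets $\mathfrak{H}(f(x),f(y))\le|x-y|^{\sigma_1}$, so $f\in\mathcal{HC}^{\sigma_1}(I,\mathcal{K}(\mathbb{R})).$ I claim $d_{{HC}^{\sigma_1}}(f,g)\ge\tfrac12$ for every $g\in\mathcal{HC}^{\sigma_2}(I,\mathcal{K}(\mathbb{R}))$. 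If not, the seminorm part of $d_{{HC}^{\sigma_1}}(f,g)$ is $<\tfrac12$, so $\mathfrak{H}\big(f(t_0)\oplus g(t_0+h),g(t_0)\oplus f(t_0+h)\big)<\tfrac12 h^{\sigma_1}$ for all small $h>0.$ On the other hand, Lemma~\ref{11} (with commutativity) gives
$$\mathfrak{H}(f(t_0),f(t_0+h))=\mathfrak{H}\big(f(t_0)\oplus g(t_0+h),f(t_0+h)\oplus g(t_0+h)\big)\le\mathfrak{H}\big(f(t_0)\oplus g(t_0+h),g(t_0)\oplus f(t_0+h)\big)+\mathfrak{H}(g(t_0),g(t_0+h)),$$
while $\mathfrak{H}(f(t_0),f(t_0+h))=h^{\sigma_1}$ and $\mathfrak{H}(g(t_0),g(t_0+h))\le[g]_{\sigma_2}h^{\sigma_2}.$ Combining, $h^{\sigma_1}\le\tfrac12 h^{\sigma_1}+[g]_{\sigma_2}h^{\sigma_2}$, i.e.\ $[g]_{\sigma_2}\ge\tfrac12 h^{\sigma_1-\sigma_2}\to\infty$ as $h\to0^+$ (since $\sigma_1<\sigma_2$), contradicting $g\in\mathcal{HC}^{\sigma_2}.$ Hence $f$ lies at distance $\ge\tfrac12$ from $\mathcal{HC}^{\sigma_2}(I,\mathcal{K}(\mathbb{R}))$, which is therefore not dense in $\mathcal{HC}^{\sigma_1}(I,\mathcal{K}(\mathbb{R})).$
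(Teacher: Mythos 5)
Your proof is correct, and two of its three parts diverge from the paper's in ways worth noting. For continuity your estimate $d_{{HC}^{\sigma_1}}(f,g)\le (1+|I|^{\sigma_2-\sigma_1})\,d_{{HC}^{\sigma_2}}(f,g)$ is the same computation the paper performs (with $\max\{(b-a)^{\sigma_2-\sigma_1},1\}$ in place of your constant). For relative compactness both arguments run through Arzel\`a--Ascoli to extract a $d_C$-convergent subsequence, but your interpolation step --- splitting the H\"older quotient over $|x-y|\ge\delta$ (controlled by $2d_C(g_j,f)\delta^{-\sigma_1}$) and $|x-y|<\delta$ (controlled by $2M\delta^{\sigma_2-\sigma_1}$ via Lemma \ref{11}) --- is an improvement: the paper instead bounds the $\sigma_1$-seminorm difference by $(b-a)^{\sigma_2-\sigma_1}d_{{HC}^{\sigma_2}}(f_{k_j},f_{k_l})$ and declares the subsequence Cauchy, but that quantity is only bounded, not small, for a merely bounded sequence, so your version actually supplies the step the paper glosses over. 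For non-density the routes are genuinely different: the paper argues by contradiction, observing that if $\mathcal{HC}^{\sigma_2}$ were dense then $\mathcal{HC}^{\sigma_1}=\overline{\bigcup_n\overline{B_n(\bold{0})}}$ would be separable (each $\overline{B_n(\bold{0})}$ being compact by the part just proved), contradicting the non-separability established in the preceding theorem; you instead exhibit the explicit function $f(x)=\{|x-t_0|^{\sigma_1}\}$ and show via Lemma \ref{11} that $d_{{HC}^{\sigma_1}}(f,g)\ge\tfrac12$ for every $g\in\mathcal{HC}^{\sigma_2}$, since otherwise $[g]_{\sigma_2}\ge\tfrac12 h^{\sigma_1-\sigma_2}\to\infty$. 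Your construction is self-contained, quantitative, and independent of both the compactness statement and the non-separability theorem, whereas the paper's is shorter given that those two results are already in hand. No gaps.
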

\begin{proof}
First, note that for $x\ne y$ in $I$, 
\begin{align*}
\frac{\mathfrak{H}(f(x),f(y))}{|x -y|^{\sigma_1}} &= \Big(\frac{\mathfrak{H}(f(x),f(y))}{|x -y|^{\sigma_2}} \Big)^{\sigma_1/ \sigma_2} \mathfrak{H}(f(x),f(y))^{1- \sigma_1/ \sigma_2}\\&\le C_f^{\sigma_1/ \sigma_2} \sup_{x,y\in I}\mathfrak{H}(f(x),f(y))^{1- \sigma_1/ \sigma_2}  .    
\end{align*}
It is clear that $\mathcal{HC}^{\sigma_2}(I, \mathcal{K}( \mathbb{R})) \subseteq \mathcal{HC}^{\sigma_1}(I, \mathcal{K}( \mathbb{R}))$ and hence the inclusion map $T$ is well-defined. Let $(f_k)_{k\in \mathbb{N}}$ be a sequence in $(\mathcal{HC}^{\sigma_2}(I, \mathcal{K}( \mathbb{R})),d_{{HC}^{\sigma_2}})$ converging to $f.$ Observe that
\begin{align*}
    &d_{{HC}^{\sigma_1}}(T(f_k), T(f))\\
    =&d_{{HC}^{\sigma_1}}(f_k, f)\\
    =&d_C(f_k,f)+\sup_{\substack{x,y\in I\\x\ne y}}\frac{\mathfrak{H}(f_k(x)\oplus f(y), f(x)\oplus f_k(y))}{|x-y|^{\sigma_1}}\\
    =&d_C(f_k,f)+\sup_{\substack{x,y\in I\\x\ne y}}\frac{\mathfrak{H}(f_k(x)\oplus f(y), f(x)\oplus f_k(y))}{|x-y|^{\sigma_2}}|x-y|^{\sigma_2-\sigma_1}\\
    \le& \max\{ \sup_{x,y\in I}|x-y|^{\sigma_2-\sigma_1},1\}\Big(d_C(f_k,f)+\sup_{\substack{x,y\in I\\x\ne y}}\frac{\mathfrak{H}(f_k(x)\oplus f(y), f(x)\oplus f_k(y))}{|x-y|^{\sigma_2}}\Big)
    \\
    =& \max\{(b-a)^{\sigma_2-\sigma_1},1\}d_{{HC}^{\sigma_2}}(f_k,f).
\end{align*}
This implies that $d_{{HC}^{\sigma_1}}(T(f_k), T(f))\rightarrow0$ as $k\rightarrow\infty,$ showing that $T$ is continuous. Let $B$ be any bounded set in $\mathcal{HC}^{\sigma_2}(I, \mathcal{K}( \mathbb{R})).$ It is remaining to show that  $T(B)$ is relatively compact (that is, its closure is compact) in $\mathcal{HC}^{\sigma_1}(I, \mathcal{K}( \mathbb{R})).$ \\
Let $(f_k)_{k\in\mathbb{N}}$ be a bounded sequence in $\mathcal{HC}^{\sigma_2}(I, \mathcal{K}( \mathbb{R})),$ that is, there is $M\ge 0$ such that 
$$\|f_k\|_{{HC}^{\sigma_2}}=d_{{HC}^{\sigma_2}}(f_k,\bold{0})\le M, ~\text{ for all }k\in\mathbb{N}.$$
Consider
\begin{align*}
    \|Tf_k\|_{{HC}^{\sigma_1}}&=d_{{HC}^{\sigma_1}}(Tf_k,\bold{0})\\&=d_{{HC}^{\sigma_1}}(f_k,\bold{0})\\
    &=d_C(f_k,\bold{0})+\sup_{\substack{x,y\in I\\x\ne y}}\frac{\mathfrak{H}(f_k(x), f_k(y))}{|x-y|^{\sigma_1}}\\
    &=d_C(f_k,\bold{0})+\sup_{\substack{x,y\in I\\x\ne y}}\frac{\mathfrak{H}(f_k(x), f_k(y))}{|x-y|^{\sigma_2}}|x-y|^{\sigma_2-\sigma_1}\\
    &\le\max\{(b-a)^{\sigma_2-\sigma_1},1\}d_{{HC}^{\sigma_2}}(f_k,\bold{0})\le M \max\{(b-a)^{\sigma_2-\sigma_1},1\}.
\end{align*}
Let $x,y\in I,$ then
\begin{align*}
    \mathfrak{H}(Tf_k(x),Tf_k(y))=\mathfrak{H}(f_k(x),f_k(y))
    &=\frac{\mathfrak{H}(f_k(x),f_k(y))}{|x-y|^{\sigma_2}}|x-y|^{\sigma_2}\\&\le |x-y|^{\sigma_2}d_{{HC}^{\sigma_2}}(f_k,\bold{0}).
\end{align*}
Thus, for any $\epsilon>0,$ there is $\delta=(\frac{\epsilon}{M})^{-\sigma_2}$ such that 
$$|x-y|<\delta~\text{ implies }~\mathfrak{H}(Tf_k(x),Tf_k(y))<\epsilon.$$ 
As a result, $(Tf_k)_{k\in\mathbb{N}}$ is uniformly equicontinuous on $I.$ In view of the Arzel$\grave{a}$-Ascoli theorem, there exists a convergence subsequence of $(Tf_k)_{k\in\mathbb{N}}$ (denoted as $(Tf_{k_j})_{j\in\mathbb{N}}$) that converges uniformly with respect to metric $d_{C}$. Again,
\begin{align*}
&\frac{\mathfrak{H}(Tf_{k_j}(x)\oplus Tf_{k_l}(y),Tf_{k_l}(x)\oplus Tf_{k_j}(y))}{|x -y|^{\sigma_1}} \\=&\frac{\mathfrak{H}(f_{k_j}(x)\oplus f_{k_l}(y),f_{k_l}(x)\oplus f_{k_j}(y))}{|x -y|^{\sigma_2}}.|x-y|^{\sigma_2-\sigma_1}\\
\le&(b-a)^{\sigma_2-\sigma_1}d_{{HC}^{\sigma_2}}(f_{k_j},f_{k_l}).
\end{align*}
claiming that the sequence $(Tf_k)_{k\in\mathbb{N}}$ has a Cauchy subsequence in $T(B),$ so $T(B)$ is relatively compact. Our next target is to show that $\mathcal{HC}^{\sigma_2}(I, \mathcal{K}( \mathbb{R}))$ is not dense in $\mathcal{HC}^{\sigma_1}(I, \mathcal{K}( \mathbb{R})).$ On the contrary, suppose $\overline{\mathcal{HC}^{\sigma_2}(I, \mathcal{K}( \mathbb{R}))}=\mathcal{HC}^{\sigma_1}(I, \mathcal{K}( \mathbb{R})).$ Also, $\mathcal{HC}^{\sigma_2}(I, \mathcal{K}( \mathbb{R}))=\bigcup_{n\in\mathbb{N}}B_n(\bold{0}),$ where $B_n(\bold{0})=\{f\in\mathcal{HC}^{\sigma_2}(I, \mathcal{K}( \mathbb{R})):d_{{HC}^{\sigma_2}}(f,\bold{0})<n\}.$ So,$$\mathcal{HC}^{\sigma_1}(I, \mathcal{K}( \mathbb{R}))=\overline{\bigcup_{n\in\mathbb{N}}B_n(\bold{0})}=\overline{\bigcup_{n\in\mathbb{N}}\overline{B_n(\bold{0})}}.$$ 
Since $\overline{B_n(\bold{0})}$ is compact in $\mathcal{HC}^{\sigma_1}(I, \mathcal{K}( \mathbb{R}))$, it is separable, and this implies that $\bigcup_{n\in\mathbb{N}}\overline{B_n(\bold{0})}$ is separable, that is, $\mathcal{HC}^{\sigma_1}(I, \mathcal{K}( \mathbb{R}))$ is separable, which is a contradiction.
\end{proof}

Next, we show the existence of a class of set-valued fractal functions in the H\"older spaces.
\begin{theorem}\label{holder1}
Assume for any $n\in \Sigma_N,$ the functions $S_n \in \mathcal{HC}^\sigma(I,\mathcal{K}(\mathbb{R}))$ are a compact interval-valued in $I.$ The functions $Q_n:I\times \mathcal{K}(\mathbb{R})\rightarrow\mathcal{K}(\mathbb{R})$ are compact interval-valued continuous and Rakotch contraction in second variable with $\phi_n(\beta t)=\beta\phi_n(t),$ for all $\beta,t>0$ and
\begin{align*}
    \mathfrak{H}&(Q_n(x,Y),Q_n(x_*,Y))\le q_n|x-x_*|^\sigma\mathfrak{H}(Y,\{0\}),\\
 &Q_n(x,Y)\oplus Q_n(x,Y_*)=Q_n(x,Y\oplus Y_*),
\end{align*}
where 
$$\Bigg\{\Big(1+\frac{N}{\min_{n\in \Sigma_N}|a_n|^\sigma}\Big)\frac{\max_{n\in \Sigma_N}\phi_n(t)}{t}+ N\frac{\max_{n\in \Sigma_N}q_n}{\min_{n\in \Sigma_N}|a_n|^\sigma}\Bigg\}<1$$ for all $t>0.$ Then, there exists a unique continuous fractal function $h\in \mathcal{HC}(I,\mathcal{K}(\mathbb{R}))$ satisfying  $$Q_n(x,h(x))\oplus S_n(x)=h(L_n(x)),$$ where $x\in I, n\in \Sigma_N.$
\end{theorem}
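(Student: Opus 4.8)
The plan is to reproduce, in the H\"older setting, the scheme used for the $\mathcal{BV}$-analogue above: exhibit $h$ as the unique fixed point of a Read--Bajraktarevi\'c (RB) operator on a complete subspace of $(\mathcal{HC}^\sigma(I,\mathcal{K}(\mathbb{R})),d_{{HC}^\sigma})$ and invoke the Rakotch fixed point theorem (Theorem \ref{RK}). First I would set
\[\mathcal{HC}^\sigma_*(I,\mathcal{K}(\mathbb{R})):=\{g\in\mathcal{HC}^\sigma(I,\mathcal{K}(\mathbb{R})):\ g(x_0)=Y_0,\ g(x_N)=Y_N\};\]
as for $\mathcal{C}_*$ and $\mathcal{BV}_*$, this is closed in the complete space $(\mathcal{HC}^\sigma(I,\mathcal{K}(\mathbb{R})),d_{{HC}^\sigma})$, hence itself complete. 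Then define $\mathcal{R}g(x):=Q_n(L_n^{-1}(x),g(L_n^{-1}(x)))\oplus S_n(L_n^{-1}(x))$ for $x\in I_n$, $n\in\Sigma_N$; the endpoint hypotheses on $Q_n$ and $S_n$ give $\mathcal{R}g(x_0)=Y_0$, $\mathcal{R}g(x_N)=Y_N$ and consistency at the interior nodes, so $\mathcal{R}g$ is well defined as a $\mathcal{K}(\mathbb{R})$-valued function.

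Next I would verify $\mathcal{R}(\mathcal{HC}^\sigma_*)\subseteq\mathcal{HC}^\sigma_*$. For $x',y'$ in the same $I_n$, writing $u=L_n^{-1}(x')$, $v=L_n^{-1}(y')$ (so $|u-v|=|x'-y'|/|a_n|$), Lemma \ref{le1} gives $\mathfrak{H}(\mathcal{R}g(x'),\mathcal{R}g(y'))\le\mathfrak{H}(Q_n(u,g(u)),Q_n(v,g(v)))+\mathfrak{H}(S_n(u),S_n(v))$; splitting the $Q_n$-term across its two arguments and using the two regularity bounds on $Q_n$, the H\"older bound for $S_n$, and the homogeneity $\phi_n(\beta t)=\beta\phi_n(t)$ (to extract the factor $|a_n|^{-\sigma}$ from inside $\phi_n$), one obtains $\mathfrak{H}(\mathcal{R}g(x'),\mathcal{R}g(y'))\le|a_n|^{-\sigma}\big(\phi_n([g]_\sigma)+q_n\|g\|_C+[S_n]_\sigma\big)|x'-y'|^\sigma$. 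For $x',y'$ in distinct subintervals, the ordinary triangle inequality for $\mathfrak{H}$ telescopes this through the at most $N$ nodes between them, each intervening gap having length $\le|x'-y'|$; hence $[\mathcal{R}g]_\sigma<\infty$ and $\mathcal{R}g\in\mathcal{HC}^\sigma_*$.

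The heart of the proof is showing $\mathcal{R}$ is a Rakotch contraction for $d_{{HC}^\sigma}$. For the $d_C$-component, Lemma \ref{11} gives $\mathfrak{H}(\mathcal{R}g(x),\mathcal{R}g_*(x))=\mathfrak{H}(Q_n(u,g(u)),Q_n(u,g_*(u)))\le\phi_n(\mathfrak{H}(g(u),g_*(u)))$, so $d_C(\mathcal{R}g,\mathcal{R}g_*)\le\max_n\phi_n(d_{{HC}^\sigma}(g,g_*))$. For the H\"older-seminorm component, with $x,y$ in the same $I_n$ I would cancel the common $S_n$-summands via Lemma \ref{11}, then use $Q_n(x,Y)\oplus Q_n(x,Y_*)=Q_n(x,Y\oplus Y_*)$ together with associativity of $\oplus$ on compact-interval-valued sets (the associativity Proposition in Section \ref{sec2}) to rewrite $\mathfrak{H}(\mathcal{R}g(x)\oplus\mathcal{R}g_*(y),\mathcal{R}g_*(x)\oplus\mathcal{R}g(y))$ as $\mathfrak{H}\big(Q_n(u,(g\oplus(-g_*))(u)),Q_n(v,(g\oplus(-g_*))(v))\big)$, exactly as in the $\mathcal{BV}$ computation; splitting again across the two arguments of $Q_n$, and re-expressing $\mathfrak{H}\big((g\oplus(-g_*))(u),(g\oplus(-g_*))(v)\big)$ as $\mathfrak{H}\big(g(u)\oplus g_*(v),g_*(u)\oplus g(v)\big)$ by Lemma \ref{11}, the right side is bounded by $|a_n|^{-\sigma}\big(\phi_n(d_{{HC}^\sigma}(g,g_*))+q_n\,d_{{HC}^\sigma}(g,g_*)\big)|x-y|^\sigma$. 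For general $x\in I_m$, $y\in I_n$ I would telescope through the intervening nodes via the inequality
\[\mathfrak{H}(A_1\oplus B_3,B_1\oplus A_3)\le\mathfrak{H}(A_1\oplus B_2,B_1\oplus A_2)+\mathfrak{H}(A_2\oplus B_3,B_2\oplus A_3),\]
itself a short consequence of Lemma \ref{11} and associativity on intervals, which introduces the factor $N$. Collecting all contributions gives $d_{{HC}^\sigma}(\mathcal{R}g,\mathcal{R}g_*)\le\Phi(d_{{HC}^\sigma}(g,g_*))$ with $\Phi(t)=\big(1+\tfrac{N}{\min_n|a_n|^\sigma}\big)\max_n\phi_n(t)+\tfrac{N\max_n q_n}{\min_n|a_n|^\sigma}\,t$; since $\max_n\phi_n$ is increasing, $\max_n\phi_n(t)/t$ is decreasing, and $\Phi(t)/t<1$ by the stated hypothesis, $\Phi$ satisfies the conditions of Theorem \ref{n1}, so $\mathcal{R}$ is a Rakotch contraction on $\mathcal{HC}^\sigma_*$, and Theorem \ref{RK} delivers the unique fixed point $h\in\mathcal{HC}^\sigma_*$, which is exactly the asserted set-valued fractal function.

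I expect the main obstacle to be the seminorm estimate: establishing the telescoping inequality above and then the bookkeeping of constants — the factor $N$ from the number of subintervals, the $\min_n|a_n|^{-\sigma}$ from the Lipschitz constants of the $L_n^{-1}$, and the separate $1$ from the $d_C$-term — so that they assemble precisely into the hypothesised contraction ratio. A secondary point is justifying the algebraic manipulations with $\oplus$ (cancelling the $S_n$-terms, rewriting $(g\oplus(-g_*))$-differences as $\oplus$-sums of the individual functions, and pulling $\phi_n$ through the scaling), all of which rest on associativity of the metric sum and hence on every set in sight being a compact interval.
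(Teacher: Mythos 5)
Your proposal is correct and follows essentially the same route as the paper: the closed subspace $\mathcal{HC}^\sigma_*$ with endpoint conditions, the RB operator, the seminorm bound for well-definedness, cancellation of the $S_n$-terms via Lemma \ref{11}, the distributivity $Q_n(x,Y)\oplus Q_n(x,Y_*)=Q_n(x,Y\oplus Y_*)$ combined with homogeneity of $\phi_n$ to extract the factor $|a_n|^{-\sigma}$, and finally the Rakotch fixed point theorem with exactly the hypothesised contraction ratio. The only difference is that you make explicit the telescoping argument (with its cross-term inequality) behind the factor $N$, which the paper asserts without detail.
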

\begin{proof}
Let us define a set $\mathcal{HC}^\sigma_*(I,\mathcal{K}(\mathbb{R})):=\{g\in \mathcal{HC}^\sigma(I,\mathcal{K}(\mathbb{R})): g(x_0)=Y_0,g(x_N)=Y_N\}.$ Then $\mathcal{HC}^\sigma_*(I,\mathcal{K}(\mathbb{R}))$ is a closed subset of $\mathcal{HC}^\sigma(I,\mathcal{K}(\mathbb{R})),$ and is a complete metric space with respect to $d_{HC}.$ The RB operator $\mathcal{R}:\mathcal{HC}^\sigma_*(I,\mathcal{K}(\mathbb{R}))\rightarrow\mathcal{HC}^\sigma_*(I,\mathcal{K}(\mathbb{R}))$ is defined as
 \begin{equation*}
      \mathcal{R}g(x):=Q_n(L_n^{-1}(x),g(L_n^{-1}(x)))\oplus S_n(L_n^{-1}(x)),
  \end{equation*}
  for all $x\in I_n, \ n\in \Sigma_N.$ Clearly, $\mathcal{R}g(x_n)=Y_n,$ for all $n\in \Sigma_{0,N}.$ First, we aim to show that $\mathcal{R}$ is well defined in $\mathcal{HC}^\sigma_*(I,\mathcal{K}(\mathbb{R})).$
  \begin{align*}
     [\mathcal{R}g]_\sigma= &\sup_{\substack{x,y\in I\\x\ne y}}\frac{\mathfrak{H}(\mathcal{R}g(x),\mathcal{R}g(y))}{|x-y|^\sigma}\\
      \le& N\max_{n\in \Sigma_N}\sup_{\substack{x,y\in I_n\\x\ne y}}\frac{\mathfrak{H}(\mathcal{R}g(x),\mathcal{R}g(y))}{|x-y|^\sigma}\\
    \le&N\max_{n\in \Sigma_N}\Bigg\{\sup_{\substack{x,y\in I_n\\x\ne y}}\frac{\mathfrak{H}(Q_n(L_n^{-1}(x),g(L_n^{-1}(x))),Q_n(L_n^{-1}(y),g(L_n^{-1}(y))))}{|x-y|^\sigma}+\\
    &\quad \sup_{\substack{x,y\in I_n\\x\ne y}}\frac{\mathfrak{H}(S_n(L_n^{-1}(x)),S_n(L_n^{-1}(y)))} {|x-y|^\sigma} \Bigg\}\\
    \le&N\max_{n\in \Sigma_N}\Bigg\{\sup_{\substack{x,y\in I_n\\x\ne y}}\frac{\mathfrak{H}(Q_n(L_n^{-1}(x),g(L_n^{-1}(x))),Q_n(L_n^{-1}(x),g(L_n^{-1}(y))))}{|x-y|^\sigma}+\\
    &\quad \sup_{\substack{x,y\in I_n\\x\ne y}}\frac{\mathfrak{H}(Q_n(L_n^{-1}(x),g(L_n^{-1}(y))),Q_n(L_n^{-1}(y),g(L_n^{-1}(y))))}{|x-y|^\sigma}+\frac{[S_n]_\sigma}{\min_{n\in \Sigma_N}|a_n|^\sigma}\Bigg\}\\
     \le&N\max_{n\in \Sigma_N}\Bigg\{\sup_{\substack{x,y\in I_n\\x\ne y}}\frac{\phi_n\Big(\mathfrak{H}(g(L_n^{-1}(x)),g(L_n^{-1}(y)))\Big)}{|x-y|^\sigma}+\\
      &\quad \sup_{\substack{x,y\in I_n\\x\ne y}}\frac{q_n|L_n^{-1}(x)-L_n^{-1}(y)|^\sigma.\|g\|_C}{|x-y|^\sigma}+\frac{[S_n]_\sigma}{\min_{n\in \Sigma_N}|a_n|^\sigma}\Bigg\}\\
       \le&N\max_{n\in \Sigma_N}\Bigg\{\sup_{\substack{x,y\in I_n\\x\ne y}}\frac{\mathfrak{H}(g(L_n^{-1}(x)),g(L_n^{-1}(y)))}{|x-y|^\sigma}+\frac{(q_n.\|g\|_C)+[S_n]_\sigma}{\min_{n\in \Sigma_N}|a_n|^\sigma}\Bigg\}\\
       \le&N\Bigg\{\frac{[g]_\sigma+(\max_{n\in \Sigma_N}q_n.\|g\|_C)+\max_{n\in \Sigma_N}[S_n]_\sigma}{\min_{n\in \Sigma_N}|a_n|^\sigma}\Bigg\}.
  \end{align*}
  Thus, $\mathcal{R}$ is well-defined on $\mathcal{HC}^\sigma_*(I,\mathcal{K}(\mathbb{R})).$ Again to show $\mathcal{R}$ is a Rakotch contraction on $\mathcal{HC}^\sigma_*(I,\mathcal{K}(\mathbb{R})).$ Let $g,g_*\in\mathcal{HC}^\sigma_*(I,\mathcal{K}(\mathbb{R})),$ then
  \begin{align*}
      &\sup_{\substack{x,y\in I\\x\ne y}}\frac{\mathfrak{H}(\mathcal{R}g(x)\oplus \mathcal{R}g_*(y),\mathcal{R}g_*(x)\oplus\mathcal{R}g(y))}{|x-y|^\sigma}\\
   \le& N\max_{n\in \Sigma_N}\Bigg\{\sup_{\substack{x,y\in I_n\\x\ne y}}\frac{\mathfrak{H}\Big(\substack{Q_n(L_n^{-1}(x),g(L_n^{-1}(x)))\oplus S_n(L_n^{-1}(x))\oplus Q_n(L_n^{-1}(y),g_*(L_n^{-1}(y)))\\\oplus S_n(L_n^{-1}(y)),Q_n(L_n^{-1}(x),g_*(L_n^{-1}(x)))\oplus S_n(L_n^{-1}(x))\\\oplus Q_n(L_n^{-1}(y),g(L_n^{-1}(y)))\oplus S_n(L_n^{-1}(y))}\Big)}{|x-y|^\sigma}\Bigg\}\\ 
   =& N\max_{n\in \Sigma_N}\Bigg\{\sup_{\substack{x,y\in I_n\\x\ne y}}\frac{\mathfrak{H}\Big(\substack{Q_n(L_n^{-1}(x),g(L_n^{-1}(x)))\oplus Q_n(L_n^{-1}(y),g_*(L_n^{-1}(y))),\\Q_n(L_n^{-1}(x),g_*(L_n^{-1}(x)))\oplus Q_n(L_n^{-1}(y),g(L_n^{-1}(y)))}\Big)}{|x-y|^\sigma}\Bigg\}\\ 
   =& N\max_{n\in \Sigma_N}\Bigg\{\sup_{\substack{x,y\in I_n\\x\ne y}}\frac{\mathfrak{H}\Big(\substack{Q_n(L_n^{-1}(x),(g\oplus(-g_*))(L_n^{-1}(x))),Q_n(L_n^{-1}(y),(g\oplus(-g_*))(L_n^{-1}(y)))}\Big)}{|x-y|^\sigma}\Bigg\}\\ 
    \le& N\max_{n\in \Sigma_N}\sup_{\substack{x,y\in I_n\\x\ne y}}\Bigg\{\frac{\mathfrak{H}\Big(\substack{Q_n(L_n^{-1}(x),(g\oplus(-g_*))(L_n^{-1}(x))),Q_n(L_n^{-1}(x),(g\oplus(-g_*))(L_n^{-1}(y)))}\Big)}{|x-y|^\sigma}\\ 
    &\quad +\frac{\mathfrak{H}\Big(\substack{Q_n(L_n^{-1}(x),(g\oplus(-g_*))(L_n^{-1}(y))),Q_n(L_n^{-1}(y),(g\oplus(-g_*))(L_n^{-1}(y)))}\Big)}{|x-y|^\sigma}\Bigg\}\\
    \le& N\max_{n\in \Sigma_N}\sup_{\substack{x,y\in I_n\\x\ne y}}\Bigg\{\frac{\phi_n\Big(\mathfrak{H}\Big((g\oplus(-g_*))(L_n^{-1}(x)),(g\oplus(-g_*))(L_n^{-1}(y))\Big)\Big)}{|x-y|^\sigma}\\ 
    &\quad +\frac{q_n|L_n^{-1}(x)-L_n^{-1}(y)|^\sigma\mathfrak{H}\Big((g\oplus(-g_*))(L_n^{-1}(y)),\{0\}\Big)}{|x-y|^\sigma}\Bigg\}\\
     \le& N\max_{n\in \Sigma_N}\sup_{\substack{x,y\in I_n\\x\ne y}}\phi_n\Bigg(\frac{\mathfrak{H}\Big(g(L_n^{-1}(x))\oplus g_*(L_n^{-1}(y)),g_*(L_n^{-1}(x))\oplus g(L_n^{-1}(y))\Big)}{|x-y|^\sigma}\Bigg)\\ 
    &\quad +N\frac{\max_{n\in \Sigma_N}q_n}{\min_{n\in \Sigma_N}|a_n|^\sigma}d_C(g,g_*)\\
    \le& N\max_{n\in \Sigma_N}\phi_n\Bigg(\sup_{\substack{x,y\in I_n\\x\ne y}}\frac{\mathfrak{H}\Big(g(L_n^{-1}(x))\oplus g_*(L_n^{-1}(y)),g_*(L_n^{-1}(x))\oplus g(L_n^{-1}(y))\Big)}{|x-y|^\sigma}\Bigg)\\ 
    &\quad +N\frac{\max_{n\in \Sigma_N}q_n}{\min_{n\in \Sigma_N}|a_n|^\sigma}d_C(g,g_*)\\
     =& N\frac{\max_{n\in \Sigma_N}\phi_n(d_{{HC}^\sigma}(g,g_*))}{\min_{n\in \Sigma_N}|a_n|^\sigma}+N\frac{\max_{n\in \Sigma_N}q_n}{\min_{n\in \Sigma_N}|a_n|^\sigma}d_{{HC}^\sigma}(g,g_*).
  \end{align*}
In view of last estimation, notice that
\begin{align*}
    d_{{HC}^\sigma}(\mathcal{R}g,\mathcal{R}g_*)&=d_C(\mathcal{R}g,\mathcal{R}g_*)+\sup_{\substack{x,y\in I\\x\ne y}}\frac{\mathfrak{H}(\mathcal{R}g(x)\oplus \mathcal{R}g_*(y),\mathcal{R}g_*(x)\oplus\mathcal{R}g(y))}{|x-y|^\sigma}\\
    &\le \max_{n\in \Sigma_
N}\phi_n\Big(d_{{HC}^\sigma}(g,g_*)\Big)+N\frac{\max_{n\in \Sigma_N}\phi_n(d_{{HC}^\sigma}(g,g_*))}{\min_{n\in \Sigma_N}|a_n|^\sigma}+\\
&\quad N\frac{\max_{n\in \Sigma_N}q_n}{\min_{n\in \Sigma_N}|a_n|^\sigma}d_{{HC}^\sigma}(g,g_*)\\
 &\le \Bigg\{\Big(1+\frac{N}{\min_{n\in \Sigma_N}|a_n|^\sigma}\Big)\frac{\max_{n\in \Sigma_N}\phi_n(d_{{HC}^\sigma}(g,g_*))}{d_{{HC}^\sigma}(g,g_*)}+\\
&\quad N\frac{\max_{n\in \Sigma_N}q_n}{\min_{n\in \Sigma_N}|a_n|^\sigma}\Bigg\}d_{{HC}^\sigma}(g,g_*).
\end{align*}
Assume the function $\Theta:\mathbb{R}_+\rightarrow\mathbb{R}_+$ as $\Theta(t):=ct,$ where $$c=\Big(1+\frac{N}{\min_{n\in \Sigma_N}|a_n|^\sigma}\Big)\frac{\max_{n\in \Sigma_N}\phi_n(d_{{HC}^\sigma}(g,g_*))}{d_{HC}(g,g_*)}+ N\frac{\max_{n\in \Sigma_N}q_n}{\min_{n\in \Sigma_N}|a_n|^\sigma}.$$
Thus, under given assumption, we get $\Theta$ as an increasing function with $\frac{\Theta(t)}{t}<1$ and decreasing for all $t\in\mathbb{R}_+.$ So, $\mathcal{R}$ is a Rakotch contraction on $\mathcal{HC}_*^\sigma(I,\mathcal{K}(\mathbb{R})).$ With reference to Theorem \ref{RK}, there exists unique $h\in \mathcal{HC}_*^\sigma(I,\mathcal{K}(\mathbb{R}))$ such that $\mathcal{R}h=h,$ concluding the proof. 
\end{proof}
We recall that every Lipschitz function is H\"older continuous with exponent $1.$ 
The forthcoming theorem demonstrates the existence of a probability vector so that the associated invariant measure is equivalent to the Hausdorff measure. 
\begin{theorem}\label{absolutecontinuity}
Let $\mathcal{J}$ be the IFS consists of mappings defined in equations \eqref{eq:a} and \eqref{eq:b}. For any $n\in \Sigma_N,$ the functions $S_n \in \mathcal{HC}^1(I,\mathcal{K}(\mathbb{R}))$ is a compact interval-valued in $I.$ The functions $Q_n:I\times \mathcal{K}(\mathbb{R})\rightarrow\mathcal{K}(\mathbb{R})$ is compact interval-valued continuous and Rakotch contraction in second variable with $\phi_n(\beta t)=\beta\phi_n(t),$ for all $\beta,t>0$ and
\begin{align*}
    \mathfrak{H}&(Q_n(x,Y),Q_n(x_*,Y))\le q_n|x-x_*|\mathfrak{H}(Y,\{0\}),\\
 &Q_n(x,Y)\oplus Q_n(x,Y_*)=Q_n(x,Y\oplus Y_*),
\end{align*}
where 
$$\Bigg\{\Big(1+\frac{N}{\min_{n\in \Sigma_N}|a_n|}\Big)\frac{\max_{n\in \Sigma_N}\phi_n(t)}{t}+ N\frac{\max_{n\in \Sigma_N}q_n}{\min_{n\in \Sigma_N}|a_n|}\Bigg\}<1$$ for all $t>0.$ 
Then there exists a probability vector $p=(p_{n})_{n\in \Sigma_N}$ such that the associated invariant measure $\mu_p$ generated by IFS $\mathcal{J}$ is equivalent to the one-dimensional  Hausdorff measure supported on $\mathcal{G}_*(f).$, i.e.,
    $H^1\vert_{\mathcal{G}_*(f)}.$
\end{theorem}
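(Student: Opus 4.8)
The plan is to pick the probability vector so that the invariant measure it induces on the base interval $I$ is normalised Lebesgue measure, and then push this forward through the graph homeomorphism of Lemma~\ref{homeo}, using that the fractal function $f$ is Lipschitz. First I would take $p_n:=a_n$ for each $n\in\Sigma_N$. Since $L_n(x)=a_nx+b_n$ with $a_n=\frac{x_n-x_{n-1}}{x_N-x_0}>0$ and $\sum_{n\in\Sigma_N}a_n=1$, this is a genuine probability vector; moreover the images $I_n=L_n(I)=[x_{n-1},x_n]$ have pairwise disjoint interiors and cover $I$, so the similarity IFS $\{I;~L_n:n\in\Sigma_N\}$ satisfies the open set condition, its similarity dimension equals $1$ (because $\sum_n a_n^1=1$), and the $p_n=a_n^1$ are precisely the associated natural weights. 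I would then verify that the invariant measure $\nu_p$ of this IFS is normalised Lebesgue measure, $\nu_p=\frac{1}{|I|}\,H^1|_I$: one checks directly that $\frac{1}{|I|}H^1|_I$ satisfies the invariance identity $\nu=\sum_{n\in\Sigma_N}a_n\,\nu\circ L_n^{-1}$, using $H^1(L_n^{-1}(A))=a_n^{-1}H^1(A\cap I_n)$ together with the fact that the $I_n$ partition $I$ up to an $H^1$-null set, and then concludes by uniqueness of the invariant measure.

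Next I would invoke Lemma~\ref{homeo}, which yields $\mu_p(P(B))=\nu_p(B)$ for every Borel $B\subseteq I$, where $P(x)=(x,f(x))$; equivalently $\mu_p=\nu_p\circ P^{-1}$, the image of normalised Lebesgue measure under the graph map. The crucial regularity input is that $f$ is Lipschitz. This holds because the hypotheses of the present theorem are precisely those of Theorem~\ref{holder1} with $\sigma=1$ (the same $S_n\in\mathcal{HC}^1(I,\mathcal{K}(\mathbb{R}))$, the same contractivity of $Q_n$ in the second variable, the same homogeneity $\phi_n(\beta t)=\beta\phi_n(t)$, and the same quantitative bound); the self-referential function $f$ coincides with the function $h$ provided by Theorem~\ref{holder1} (both satisfy the same self-referential equation, being the unique fixed point of the RB operator), so $f\in\mathcal{HC}^1(I,\mathcal{K}(\mathbb{R}))$, i.e.\ $\mathfrak{H}(f(x),f(y))\le C_f|x-y|$ for some $C_f>0$. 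Consequently, for the metric $d$ on $I\times\mathcal{K}(\mathbb{R})$,
\[ |x-y|\;\le\; d(P(x),P(y))=|x-y|+\mathfrak{H}(f(x),f(y))\;\le\;(1+C_f)\,|x-y|, \]
so $P$ is a bi-Lipschitz homeomorphism of $I$ onto $\mathcal{G}_*(f)$.

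Finally I would conclude using the standard fact that a bi-Lipschitz map distorts $H^1$ by at most its Lipschitz constants: for every Borel $A\subseteq I$,
\[ H^1(A)\;\le\; H^1\bigl(P(A)\bigr)\;\le\;(1+C_f)\,H^1(A). \]
Since $H^1|_I$ coincides with Lebesgue measure, $H^1(A)=|I|\,\nu_p(A)=|I|\,\mu_p(P(A))$, and since $P$ is a homeomorphism every Borel $E\subseteq\mathcal{G}_*(f)$ has the form $P(A)$ with $A\subseteq I$ Borel; therefore
\[ |I|\,\mu_p(E)\;\le\; H^1|_{\mathcal{G}_*(f)}(E)\;\le\;(1+C_f)\,|I|\,\mu_p(E) \]
for all such $E$. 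In particular $0<H^1(\mathcal{G}_*(f))=H^1(P(I))<\infty$, and $\mu_p$ and $H^1|_{\mathcal{G}_*(f)}$ are mutually absolutely continuous with Radon--Nikodym derivative bounded between $|I|$ and $(1+C_f)|I|$, which establishes the claimed equivalence.

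I expect the two mildly delicate points to be the identification of $\nu_p$ with Lebesgue measure (immediate once the weights $p_n=a_n$ are chosen and the open set condition is observed) and carrying out the bi-Lipschitz comparison of $H^1$ with respect to the product metric $d$ rather than an ambient Euclidean metric. Neither is a serious obstacle, so the real content of the theorem is the choice $p_n=a_n$ together with the Lipschitz regularity of $f$ furnished by Theorem~\ref{holder1}; without that Lipschitz property (for instance, for merely H\"older $f$) the graph could fail to have $H^1$-measure in $(0,\infty)$ and the equivalence would break down, which is why the hypotheses are stated at the level of $\mathcal{HC}^1$.
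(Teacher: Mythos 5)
Your proposal is correct and follows essentially the same route as the paper: the weights $p_n=a_n$, the identification of the base invariant measure with (normalised) one-dimensional Hausdorff measure on $I$, the transfer via Lemma~\ref{homeo}, and the bi-Lipschitz comparison of $H^1$ using the Lipschitz regularity of $f$ from Theorem~\ref{holder1}. The only cosmetic difference is that you verify the invariance identity for $\frac{1}{|I|}H^1|_I$ directly (and correctly insert the normalising factor $|I|$), whereas the paper cites Mor\'an--Rey for this step.
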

 \begin{proof}
 Firstly, observe that the IFS $\{I ; \ L_n:n \in \Sigma_N\}$ consists of similarity mappings with similarity ratio $|a_n|,$ and satisfies open set condition. Using \cite[Theorem $2.2$]{Moran}, there exists a probability vector $p^*=(|a_n|)_{n \in \Sigma_N}$ such that the associated invariant measure $\nu_{p^*}$ generated by the IFS $\{I ; \ L_n:n \in \Sigma_N\}$ satisfies
 $$\nu_{p^*}=H^{1}\vert_{I},$$
 where $H^{1}\vert_{I }$ is the one-dimensional Hausdorff measure supported on $I .$ Let $\mu_{p^*}$ be the invariant measure corresponding to $p^*$ generated by IFS $\mathcal{J}.$ Let $B$ be a Borel subset of $\mathcal{G}_*(f).$ Exerting Lemma \ref{homeo}, we get
 \begin{equation}\label{1234}
   \mu_{p^*}(B)=\nu_{p^*}(P^{-1}(B))=H^{1}\vert_{I }(P^{-1}(B)).
 \end{equation}
 With reference to Theorem \ref{holder1}, the fractal function $f\in\mathcal{HC}^1(I,\mathcal{K}(\mathbb{R}))$  with constant $l,$ say. For $x,y\in I,$
\begin{align*}
    |x-y|\le d(P(x),&P(y))=d((x,f(x)),(y,f(y)))=|x-y|+\mathfrak{H}(f(x),f(y))\\&\le|x-y|+l|x-y|=(1+l)|x-y|,
\end{align*}
 claiming that $P$ is a  bi-Lipschitz function. From \cite[Proposition 2.2]{Fal}, for any Borel subset $B$ of $\mathcal{G}_*(f)$, we have
$$H^{1}(P^{-1}(B))\leq H^{1}(B)\leq (1+l)H^{1}(P^{-1}(B)).$$
Therefore, from equation \eqref{1234}, we obtain
$$\mu_{p^*}(B) \leq  H^{1}(B) \leq  (1+l)\mu_{p^*}(B).$$
This implies that $\mu_{p^*}$ is absolutely continuous with respect to $H^1\vert_{\mathcal{G}_*(f)}$ and vice-versa, claiming the assertion.
\end{proof}

\section{Dimension Results}\label{sec4}
This section is dedicated to the dimensional estimation of the distance set and difference set of the graph of SVFs. In the sequel, we give some remarkable comments on the celebrated distance set conjecture given by Falconer \cite{Fal1}. Further, the decomposition of a continuous compact convex SVFs is exhibited, preserving the Hausdorff dimension.

\begin{proposition}
    Let $A,B\in\mathcal{K}(\mathbb{R}).$ Then 
    $$\max\{\dim_{H}A,\dim_{H}B\}\le\dim_{H}\Lambda(A,B).$$
\end{proposition}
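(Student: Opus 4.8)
The plan is to exploit the fact that the two coordinate projections, restricted to $\Lambda(A,B)$, are surjective onto $A$ and onto $B$, combined with the standard monotonicity principle that a Lipschitz image cannot increase Hausdorff dimension.

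First I would show that $\pi_1(\Lambda(A,B)) = A$, where $\pi_1(a,b)=a$. Fix $a\in A$. Since $B$ is compact, the quantity $D(a,B)=\min_{b\in B}|a-b|$ is attained at some $b_0\in B$, so $b_0\in\Lambda_B(a)$ and hence $(a,b_0)\in\Lambda(A,B)$; therefore $a\in\pi_1(\Lambda(A,B))$. This gives $A\subseteq\pi_1(\Lambda(A,B))\subseteq A$. Symmetrically, using compactness of $A$ and the nearest-point set $\Lambda_A(b)$, one obtains $\pi_2(\Lambda(A,B))=B$, where $\pi_2(a,b)=b$.

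Next I would invoke that $\pi_1,\pi_2:\Lambda(A,B)\to\mathbb{R}$ are $1$-Lipschitz for the ambient metric on $A\times B\subseteq\mathbb{R}^2$ (clearly $|\pi_i(p)-\pi_i(q)|\le|p-q|$), together with the fact that for any Lipschitz map $g$ one has $\dim_H g(E)\le\dim_H E$ (the same monotonicity invoked via \cite[Proposition 2.2]{Fal}, in its one-sided Lipschitz form). Applying this twice yields $\dim_H A=\dim_H\pi_1(\Lambda(A,B))\le\dim_H\Lambda(A,B)$ and $\dim_H B=\dim_H\pi_2(\Lambda(A,B))\le\dim_H\Lambda(A,B)$, and taking the maximum gives the assertion.

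The one point deserving care, and the only place where the hypotheses are genuinely used, is the surjectivity of the projections: it relies on compactness of $A$ and $B$, since compactness is exactly what guarantees that the nearest-point sets $\Lambda_A(b)$ and $\Lambda_B(a)$ are non-empty, so that every point of $A$ (respectively of $B$) occurs as a coordinate of some metric pair. Beyond that, the argument is just the routine behaviour of Hausdorff dimension under Lipschitz maps, so I do not anticipate any serious obstacle.
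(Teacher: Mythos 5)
Your argument is correct and is essentially identical to the paper's proof: both show the coordinate projections restricted to $\Lambda(A,B)$ are Lipschitz and surjective onto $A$ and $B$ (surjectivity coming from compactness guaranteeing nearest points exist), and then apply the fact that Lipschitz maps do not increase Hausdorff dimension. No gaps.
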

\begin{proof}
 Define a  function $\omega:\Lambda(A,B)\to A$ as 
 $$\omega(a,b)=a, ~~\forall~(a,b)\in \Lambda(A,B).$$
Then $\omega$ is a Lipschitz and onto map, as
 \begin{align*}
     |\omega(a,b)-\omega(a_*,b_*)|=|a-a_*|\le\|(a,b)-(a_*,b_*)\|_2.
 \end{align*}
 For any $a\in A,$ note that $\Lambda_B(a)=\{b\in B: D(a,B)=|a-b|\}.$ Define a map $\Phi:B\to\mathbb{R}$ as 
 $$\Phi(b)=|b-a|, ~ ~\forall~ b\in B.$$
Since $\Phi$ is continuous on a compact set $B,$ using the extreme value theorem, there exists $b_*\in B$ such that 
 $$\Phi(b_*)=|b_*-a|=\min_{b\in B}|b-a|.$$
 This gives $b_*\in\Lambda_B(a)$ and $(a,b_*)\in \Lambda(A,B),$ shows that $\omega$ is onto.  In view of \cite[Corollary 2.4]{Fal}, we get 
 $$\dim_HA\le\dim_{H}\Lambda(A,B).$$
 On the same lines, one can show that $\dim_HB\le\dim_{H}\Lambda(A,B),$ establishing the claim.
\end{proof}
Let $D\subseteq \mathbb{R}$ and $f:D\to\mathcal{K}(\mathbb{R})$ be any SVF. Then the graph of $f$ is defined as
\begin{align*}
    &\mathcal{G}(f):=\{(x,y)\in D\times\mathbb{R}:x\in D, y\in f(x)\},\\
    &\mathcal{G}_*(f):=\{(x,f(x))\in D\times\mathcal{K}(\mathbb{R}):x\in D\}.
\end{align*}
It is noted that $\mathcal{G}_*(f)\subset D\times \mathcal{K}(\mathbb{R}),$ so the metric on it is given as $$d((x,f(x)),(y,f(y)))=|x-y|+\mathfrak{H}(f(x),f(y)),$$
for all $(x,f(x))$ and $(y,f(y))\in \mathcal{G}_*(f).$ The distance sets are, respectively, defined as follows:
\begin{align*}
    &\Delta(\mathcal{G}(f)):=\{|x-x_*|+|y-y_*|:x,x_*\in D, y\in f(x), y_*\in f(x_*)\},\\
    &\Delta(\mathcal{G}_*(f)):=\{|x-x_*|+\mathfrak{H}(f(x),f(x_*)):x,x_*\in D\}.
\end{align*}
Subsequently, the difference sets are defined as 
\begin{align*}
    &\Gamma(\mathcal{G}(f)):=\{(x-x_*,y-y_*):x,x_*\in D, y\in f(x), y_*\in f(x_*)\},\\
    &\Gamma(\mathcal{G}_*(f)):=\{(x-x_*, f(x)\oplus(-f(x_*))):x,x_*\in D\}.
\end{align*}
It is obvious to observe that $\Gamma(\mathcal{G}(f))\subset \mathbb{R}\times\mathbb{R}$ and $\Gamma(\mathcal{G}_*(f))\subset \mathbb{R}\times\mathcal{K}(\mathbb{R}).$ Verma and Priyadarshi \cite{VP12} investigated the Hausdorff dimension of the distance set and difference set for the graphs of real-valued continuous functions on the unit interval and showed that the celebrated ``distance-set conjecture'' \cite{Fal1} is valid for a graph of a continuous function. Motivated by the fact, our upcoming result comments on the Hausdorff dimension of the distance set and difference set for the graph of continuous SVFs on the compact interval of $\mathbb{R}.$
\begin{remark}
  Observe that for every set $A\subset\mathbb{R}^2$ (compact), there exists a set $A_*=\{x:(x,y)\in A\}\subset \mathbb{R}$ (bounded) and a SVF $f_A: A_*\rightarrow\mathcal{K}(\mathbb{R})$ defined as 
  $$f_A(x)=\{y\in\mathbb{R}: (x,y)\in A\}.$$
  It is noted that $\mathcal{G}(f_A)=A.$ So, solving distance set conjecture for $\mathbb{R}^2$ (also in higher dimensional Euclidean spaces $\mathbb{R}^n$) is the same as solving the problem for SVFs.
\end{remark}

The upcoming theorem verifies the identification of Falconer's distance conjecture in $\mathbb{R}^2$ for the SVFs.
\begin{theorem}
    Let $f:I\rightarrow\mathcal{K}(\mathbb{R})$ be a continuous SVF such that $\dim_H(\mathcal{G}(f))=\dim_P(\mathcal{G}(f))>1,$  then $\dim_H\Delta(\mathcal{G}(f))=1.$ 
\end{theorem}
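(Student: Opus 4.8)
The plan is to reduce the lower bound for $\dim_H\Delta(\mathcal{G}(f))$ to a single application of the intermediate value theorem after pinning one point of the graph. Since $\Delta(\mathcal{G}(f))\subseteq\mathbb{R}$, we automatically get $\dim_H\Delta(\mathcal{G}(f))\le 1$ (and, using that $f(I)$ is compact in $\mathcal{K}(\mathbb{R})$, one even has $\Delta(\mathcal{G}(f))\subseteq[0,M]$ for some finite $M$), so the real content is the reverse inequality $\dim_H\Delta(\mathcal{G}(f))\ge 1$.

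Write $I=[a,b]$ and fix any $y_*\in f(a)$, which is legitimate since $f(a)\in\mathcal{K}(\mathbb{R})$ is nonempty. I would consider the function $h:I\to\mathbb{R}_+$ defined by $h(x)=(x-a)+D(y_*,f(x))$, where $D(y_*,A)=\min_{z\in A}|y_*-z|$. Because $|D(y_*,A)-D(y_*,B)|\le\mathfrak{H}(A,B)$ and $f$ is continuous, the map $x\mapsto D(y_*,f(x))$ is continuous, hence so is $h$; moreover $h(a)=D(y_*,f(a))=0$ while $h(b)=(b-a)+D(y_*,f(b))\ge b-a>0$. By the intermediate value theorem, $[0,b-a]\subseteq h(I)$. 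For each $t\in[0,b-a]$, pick $x\in I$ with $h(x)=t$ and, using compactness of $f(x)$, pick $y\in f(x)$ realizing $D(y_*,f(x))=|y-y_*|$; then $t=(x-a)+|y-y_*|=|x-a|+|y-y_*|$ exhibits $t$ as an element of $\Delta(\mathcal{G}(f))$ with $x_*=a$ and $y_*\in f(a)$. Hence $[0,b-a]\subseteq\Delta(\mathcal{G}(f))$, so $\dim_H\Delta(\mathcal{G}(f))\ge 1$, and combining with the upper bound yields the theorem.

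The only step needing care is the continuity of $x\mapsto D(y_*,f(x))$ together with the selection of the minimizing $y\in f(x)$, and both are immediate from the $1$-Lipschitz dependence of $D(y_*,\cdot)$ on the Hausdorff metric and from compactness, so in fact there is no serious obstacle here. I should note that the full strength of the hypothesis $\dim_H(\mathcal{G}(f))=\dim_P(\mathcal{G}(f))>1$ is not needed for this argument; it is retained to place the statement within Falconer's distance-set conjecture in $\mathbb{R}^2$, where $\dim_H>1$ is precisely the critical threshold. A proof that genuinely exploits the packing-dimension assumption would instead study the difference set $\Gamma(\mathcal{G}(f))$ and its projections, or invoke known distance-set estimates for planar sets whose Hausdorff and packing dimensions coincide and exceed $1$; but the pinned-point argument above is self-contained and already gives the sharp value $\dim_H\Delta(\mathcal{G}(f))=1$.
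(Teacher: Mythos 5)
Your proof is correct, but it takes a genuinely different route from the paper. The paper's proof is a one-line reduction to a deep external result: since $\mathcal{G}(f)$ is a Borel subset of $\mathbb{R}^2$ with $\dim_H(\mathcal{G}(f))=\dim_P(\mathcal{G}(f))>1$, Shmerkin's theorem on pinned distance sets yields a point $(x_0,y_0)\in\mathcal{G}(f)$ whose pinned distance set already has Hausdorff dimension $1$, and monotonicity finishes the argument. Your argument is elementary and self-contained: by pinning at $(a,y_*)$ with $y_*\in f(a)$ and running the intermediate value theorem on the continuous function $h(x)=(x-a)+D(y_*,f(x))$, you show that $\Delta(\mathcal{G}(f))$ actually \emph{contains the interval} $[0,b-a]$ — a strictly stronger conclusion — and, as you correctly observe, the hypothesis $\dim_H(\mathcal{G}(f))=\dim_P(\mathcal{G}(f))>1$ is never used (only that $I$ is a nondegenerate compact interval and $f$ is continuous, which is what makes the selection $x\mapsto D(y_*,f(x))$ continuous and circumvents the possible disconnectedness of $\mathcal{G}(f)$). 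Your technique is in fact the same one the paper itself uses two results later to show that $\Delta(\mathcal{G}_*(f))$ is an interval. What the paper's approach buys is robustness: Shmerkin's theorem applies to arbitrary Borel subsets of $\mathbb{R}^2$ with the stated dimension hypotheses, which is the setting of the distance-set conjecture that the surrounding Remark invokes (where the associated SVF need not be continuous nor defined on an interval); your IVT argument is tied to continuity on an interval. For the theorem as literally stated, your proof is valid and arguably preferable, though it does expose that the statement is much weaker than its hypotheses suggest with this $\ell^1$-type definition of $\Delta$.
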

\begin{proof}
    It is noted that $\mathcal{G}(f)$ is a Borel subset of $\mathbb{R}^2.$ Consequently, under given assumptions, exerting Theorem $1.1$ of \cite{Shmerkin}, there exists a $(x_0,y_0)\in \mathcal{G}(f)$ such that $$\dim_H\Delta_{(x_0,y_0)}(\mathcal{G}(f))=1,$$
    where $\Delta_{(x_0,y_0)}(\mathcal{G}(f))=\{|x-x_0|+|y-y_0|:x\in I,y\in f(x)\}\subseteq\Delta(\mathcal{G}(f)).$ Since the Hausdorff dimension follows monotonicity, concluding the claim.
\end{proof}
\begin{theorem}
     Let $f:I\rightarrow\mathcal{K}(\mathbb{R})$ be a continuous SVF such that $\dim_A(\mathcal{G}(f))>1,$ then $\dim_A\Delta(\mathcal{G}(f))=1.$ 
\end{theorem}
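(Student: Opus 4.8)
The upper bound is free: $\Delta(\mathcal{G}(f))$ is a bounded subset of $[0,\infty)\subseteq\mathbb{R}$, and every subset of $\mathbb{R}$ has Assouad dimension at most $1$, so $\dim_A\Delta(\mathcal{G}(f))\le 1$ with no hypothesis at all. The entire task is therefore the lower bound $\dim_A\Delta(\mathcal{G}(f))\ge 1$, and my plan is to obtain it in the cleanest possible way, by exhibiting a genuine nondegenerate interval inside $\Delta(\mathcal{G}(f))$. I do not expect a real obstacle here; the estimate is soft once the right interval is produced.

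First I would record that $\mathcal{G}(f)$ is a compact subset of $\mathbb{R}^2$ (because $I$ is compact, $f$ is continuous for $\mathfrak{H}$, and each $f(x)$ is compact), so that $\dim_A\mathcal{G}(f)$ is well defined. Then I would fix $x_0\in I$ that is not the right endpoint, choose $y_*\in f(x_0)$, pick $\delta>0$ with $[x_0,x_0+\delta]\subseteq I$, and consider $v(x):=(x-x_0)+D(y_*,f(x))$ on $[x_0,x_0+\delta]$. Since $|D(y_*,A)-D(y_*,B)|\le\mathfrak{H}(A,B)$ and $f$ is continuous, $v$ is continuous; moreover $v(x_0)=D(y_*,f(x_0))=0$ because $y_*\in f(x_0)$, whereas $v(x)\ge x-x_0>0$ for $x>x_0$, so $v(x_0+\delta)\ge\delta$. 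By the intermediate value theorem $v$ takes every value in $[0,\delta]$, and for each such $x$ a point $y\in f(x)$ realizing $|y-y_*|=D(y_*,f(x))$ (which exists by compactness of $f(x)$) shows $v(x)=|x-x_0|+|y-y_*|\in\Delta(\mathcal{G}(f))$. Hence $[0,\delta]\subseteq\Delta(\mathcal{G}(f))$, and since an interval has Assouad dimension $1$, monotonicity of $\dim_A$ under inclusion gives $\dim_A\Delta(\mathcal{G}(f))\ge 1$; combined with the trivial upper bound this finishes the proof. (As a by-product, the hypothesis $\dim_A\mathcal{G}(f)>1$ is not actually needed for the conclusion; it is presumably imposed only to parallel the preceding theorem.)

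There is also a conceptual route, mirroring the preceding theorem: regard $\mathcal{G}(f)$ as a planar compact set with $\dim_A\mathcal{G}(f)>1$ and invoke the Assouad-dimension analogue of Falconer's distance-set estimate in the plane (obtained by passing to weak tangents, see \cite{Fraser}), possibly in pinned form, and then use monotonicity of $\dim_A$ to pass from a pinned distance set to the full one. On this route the only point requiring real care is that the $\Delta$ used in the paper is built from the $\ell^1$ distance $|x-x_*|+|y-y_*|$ rather than the Euclidean distance of those theorems, so one needs an extra (routine but genuinely necessary) comparison between the two distance sets. That bookkeeping is exactly why I would favour the elementary interval argument above, for which the choice of norm is irrelevant; apart from it, the dimension estimate on this route is again soft.
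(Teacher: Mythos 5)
Your proof is correct, and it takes a genuinely different route from the paper's. The paper disposes of this theorem in one line by citing Fraser's weak-tangent theorem on Assouad dimensions of planar distance sets, whereas you give a self-contained elementary argument: $v(x)=(x-x_0)+D(y_*,f(x))$ is continuous (via $|D(y_*,A)-D(y_*,B)|\le\mathfrak{H}(A,B)$), vanishes at $x_0$ and is at least $\delta$ at $x_0+\delta$, so by the intermediate value theorem, together with realizing $D(y_*,f(x))$ by a point of the compact set $f(x)$, the set $\Delta(\mathcal{G}(f))$ contains $[0,\delta]$; combined with the trivial bound $\dim_A E\le 1$ for $E\subseteq\mathbb{R}$ and monotonicity of $\dim_A$, this finishes the proof. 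Your approach buys several things. It shows the hypothesis $\dim_A\mathcal{G}(f)>1$ is superfluous: for the graph of any continuous SVF over a nondegenerate compact interval, $\Delta(\mathcal{G}(f))$ contains an interval, so its Assouad (and Hausdorff, box, packing) dimension is $1$ unconditionally --- the same observation applies to the preceding Hausdorff/packing theorem in the paper. It also sidesteps the norm mismatch you correctly flag: the paper's $\Delta$ is built from $|x-x_*|+|y-y_*|$ while the cited theorem concerns the Euclidean distance set, and since distance sets with respect to polyhedral norms are known to behave differently from Euclidean ones, the paper's ``directly follows'' conceals a step that genuinely needs justification; your argument is norm-independent. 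What the paper's route buys is brevity and a link to the distance-set literature, but for this particular set (a graph over an interval) that machinery is not needed.
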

\begin{proof}
    The proof directly follows from \cite[Theorem 2.3]{Fraser1}.
\end{proof}
\begin{proposition}
    Let $I$ be a compact interval in $\mathbb{R}$ and $f:I\to\mathcal{K}(\mathbb{R})$ be any continuous function. Then the associated distance set $\Delta(\mathcal{G}_*(f))$ of the graph is an interval, and $\dim_H\Delta(\mathcal{G}_*(f))=\underline{\dim}_B\Delta(\mathcal{G}_*(f))=\overline{\dim}_B\Delta(\mathcal{G}_*(f))=1.$
\end{proposition}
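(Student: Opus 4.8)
The plan is to realize $\Delta(\mathcal{G}_*(f))$ as a continuous image of the square $I\times I$ and then invoke elementary facts about connected subsets of $\mathbb{R}$ together with the (classical) value of the dimension of an interval. First I would introduce the map $\Psi:I\times I\to\mathbb{R}_+$ given by $\Psi(x,x_*):=|x-x_*|+\mathfrak{H}(f(x),f(x_*))$, so that, directly from the definition of the distance set, $\Delta(\mathcal{G}_*(f))=\Psi(I\times I)$. The triangle inequality for the Hausdorff distance yields $|\mathfrak{H}(A,B)-\mathfrak{H}(C,D)|\le\mathfrak{H}(A,C)+\mathfrak{H}(B,D)$, so $\mathfrak{H}$ is jointly $1$-Lipschitz on $\mathcal{K}(\mathbb{R})\times\mathcal{K}(\mathbb{R})$; combining this with the continuity of $x\mapsto f(x)$ shows that $(x,x_*)\mapsto\mathfrak{H}(f(x),f(x_*))$ is continuous on $I\times I$, whence $\Psi$ is continuous.

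Next I would use that $I\times I$ is compact and connected, so that its image $\Psi(I\times I)=\Delta(\mathcal{G}_*(f))$ is a compact and connected subset of $\mathbb{R}$, that is, a closed bounded interval. Since $\Psi(x,x)=0$ for every $x\in I$, the left endpoint of this interval is $0$; and writing $I=[a,b]$ with $a<b$, the estimate $\Psi(a,b)=|a-b|+\mathfrak{H}(f(a),f(b))\ge b-a>0$ shows the interval is non-degenerate. Hence $\Delta(\mathcal{G}_*(f))=[0,M]$ for some $M\ge b-a>0$, which proves the first assertion.

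Finally I would invoke the classical fact that every non-degenerate interval $J\subseteq\mathbb{R}$ satisfies $\dim_H J=\underline{\dim}_B J=\overline{\dim}_B J=1$ (see \cite{Fal}), which gives all the claimed equalities at once. No serious difficulty arises here; the only point needing attention is the non-degeneracy of the interval, which is why I would exhibit the explicit pair $(a,b)$ above and make explicit the standing assumption that $I$ is a non-degenerate compact interval, since without it the dimension assertions obviously fail.
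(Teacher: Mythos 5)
Your proposal is correct and follows essentially the same route as the paper: both realize $\Delta(\mathcal{G}_*(f))$ as the continuous image of the connected set $I\times I$ under $(x,x_*)\mapsto|x-x_*|+\mathfrak{H}(f(x),f(x_*))$, conclude it is a non-degenerate interval, and read off the dimensions. Your version merely spells out the continuity of $\mathfrak{H}$ and the non-degeneracy check (via $\Psi(a,b)\ge b-a$) more explicitly than the paper does.
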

\begin{proof}
   Consider a map $\Phi: I\times I\to\mathbb{R}$ as 
   $$\Phi(x,x_*)=|x-x_*|+\mathfrak{H}(f(x),f(x_*)).$$
   Since $f$ is continuous, as a result $\Phi$ is continuous on $I\times I$ and $\Phi(I\times I)=\Delta(\mathcal{G}_*(f)).$ Under continuous function the image of a connected set is connected, $\Delta(\mathcal{G}_*(f))$ is a connected subset of $\mathbb{R}$ containing atleast two distinct points of $\mathbb{R}.$ Thus, $\Delta(\mathcal{G}_*(f))$ is an interval and $\dim_H\Delta(\mathcal{G}_*(f))=\underline{\dim}_B\Delta(\mathcal{G}_*(f))=\overline{\dim}_B\Delta(\mathcal{G}_*(f))=1.$
\end{proof}
\begin{proposition}
    Let $D\subseteq\mathbb{R}$ and $f:D\to\mathcal{K}(\mathbb{R})$ be any bounded function. Then 
    \begin{align*}
        \dim_H\Gamma(\mathcal{G}(f))\le\min\{2, \dim_H\mathcal{G}(f)+\overline{\dim}_B\mathcal{G}(f)\}.
    \end{align*}
\end{proposition}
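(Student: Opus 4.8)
The plan is to realise the difference set $\Gamma(\mathcal{G}(f))$ as a Lipschitz image of the Cartesian square $\mathcal{G}(f)\times\mathcal{G}(f)$ and then to combine two standard facts: Lipschitz maps do not raise Hausdorff dimension, and the product formula $\dim_H(E\times F)\le \dim_H E+\overline{\dim}_B F$. The hypothesis that $f$ is bounded enters only to guarantee that $\mathcal{G}(f)$ is a bounded subset of $\mathbb{R}^2$, so that $\overline{\dim}_B\mathcal{G}(f)$ is meaningful and finite; if it happens to be infinite (for instance when $D$ is unbounded) then the right-hand side equals $2$ and the bound is trivial because $\Gamma(\mathcal{G}(f))\subseteq\mathbb{R}^2$.

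First I would note that, straight from the definition of the difference set,
\[
\Gamma(\mathcal{G}(f))=\{u-v:\,u,v\in\mathcal{G}(f)\}=S\big(\mathcal{G}(f)\times\mathcal{G}(f)\big),
\]
where $S:\mathbb{R}^2\times\mathbb{R}^2\to\mathbb{R}^2$, $S(u,v)=u-v$, is Lipschitz (indeed $\|S(u,v)-S(u',v')\|\le\|u-u'\|+\|v-v'\|$). By the mapping behaviour of Hausdorff dimension under Lipschitz maps \cite[Corollary 2.4]{Fal},
\[
\dim_H\Gamma(\mathcal{G}(f))\le \dim_H\big(\mathcal{G}(f)\times\mathcal{G}(f)\big).
\]
Next I would invoke the classical product inequality (see \cite{Fal}), which for arbitrary sets $E,F$ gives $\dim_H(E\times F)\le\dim_H E+\overline{\dim}_B F$; taking $E=F=\mathcal{G}(f)$ yields
\[
\dim_H\big(\mathcal{G}(f)\times\mathcal{G}(f)\big)\le \dim_H\mathcal{G}(f)+\overline{\dim}_B\mathcal{G}(f).
\]
Finally, since $\Gamma(\mathcal{G}(f))\subseteq\mathbb{R}^2$, monotonicity of Hausdorff dimension gives $\dim_H\Gamma(\mathcal{G}(f))\le 2$, and combining this with the two displays above delivers $\dim_H\Gamma(\mathcal{G}(f))\le\min\{2, \dim_H\mathcal{G}(f)+\overline{\dim}_B\mathcal{G}(f)\}$.

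I do not expect a serious obstacle; the only step that carries genuine content is the product inequality in its mixed Hausdorff/upper-box form, together with the mild check that the boundedness assumption on $f$ is exactly what makes $\overline{\dim}_B\mathcal{G}(f)$ available. If one prefers to keep the argument self-contained rather than quoting the product formula, the hard part becomes a short covering estimate: fix $s>\overline{\dim}_B\mathcal{G}(f)$ and $t>\dim_H\mathcal{G}(f)$, cover $\mathcal{G}(f)$ by at most $\delta^{-s}$ sets of diameter $\le\delta$ for all small $\delta$, cover $\mathcal{G}(f)$ by an $H^t$-efficient family, form the products of the two families to cover $\mathcal{G}(f)\times\mathcal{G}(f)$, and bound the resulting $(t+s)$-dimensional Hausdorff sums to get $\dim_H(\mathcal{G}(f)\times\mathcal{G}(f))\le t+s$; letting $s\downarrow\overline{\dim}_B\mathcal{G}(f)$ and $t\downarrow\dim_H\mathcal{G}(f)$ then finishes it. Either route is routine bookkeeping once the decomposition $\Gamma(\mathcal{G}(f))=S(\mathcal{G}(f)\times\mathcal{G}(f))$ is in hand.
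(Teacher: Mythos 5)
Your proposal is correct and follows essentially the same route as the paper: both realise $\Gamma(\mathcal{G}(f))$ as the image of $\mathcal{G}(f)\times\mathcal{G}(f)$ under the (Lipschitz) subtraction map, apply \cite[Corollary 2.4]{Fal}, and finish with the mixed Hausdorff/upper-box product inequality together with the trivial bound $\dim_H\Gamma(\mathcal{G}(f))\le 2$. If anything, your Lipschitz estimate $\|S(u,v)-S(u',v')\|\le\|u-u'\|+\|v-v'\|$ is stated more cleanly than the corresponding chain of inequalities in the paper.
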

\begin{proof}
    Consider a function $\Psi:\mathcal{G}(f)\times \mathcal{G}(f)\rightarrow \Gamma(\mathcal{G}(f))$ as 
    $$\Psi((x,y),(x_*,y_*))=(x-x_*,y-y_*),$$
    for all $(x,y), (x_*,y_*)\in \mathcal{G}(f).$ It is straightforward to observe that $\Psi$ is an onto function. Since $\Gamma(\mathcal{G}(f))\subset\mathbb{R}^2,$ we get $\dim_H\Gamma(\mathcal{G}(f))\le 2.$ Notice that 
    \begin{align*}
    \|\Psi((x,y)&,(x_*,y_*))-\Psi((\overline{x},\overline{y}),(\overline{x}_*,\overline{y}_*))\|_2\\
    &=\|(x-x_*,y-y_*)-(\overline{x}-\overline{x}_*,\overline{y}-\overline{y}_*)\|_2 \\
    &\le \|(x-x_*,y-y_*)\|_2+\|(\overline{x}-\overline{x}_*,\overline{y}-\overline{y}_*)\|_2 \\
    &\le2\|(x-x_*,y-y_*)-(\overline{x}-\overline{x}_*,\overline{y}-\overline{y}_*)\|_2,
    \end{align*}
    showing that $\Psi$ is a Lipschitz map. Consequently, we get 
    \begin{align*}
       \dim_H\Gamma(\mathcal{G}(f))\le\dim_H(\mathcal{G}(f)\times \mathcal{G}(f)). 
    \end{align*}
    Also, $\mathcal{G}(f)\subset\mathbb{R}^2.$ By exerting \cite[Product formula 7.3]{Fal}, we obtain $\dim_H(\mathcal{G}(f)\times\mathcal{G}(f))\le \dim_H\mathcal{G}(f)+\overline{\dim}_B\mathcal{G}(f).$ This completes the proof.
\end{proof}
\begin{remark}
    The previous result can also be obtained for the upper box dimension. Let $f:D\to\mathcal{K}(\mathbb{R})$ be any bounded function. Then 
    \begin{align*}
        \overline{\dim}_B\Gamma(\mathcal{G}(f))\le\min\{2, \overline{\dim}_B\mathcal{G}(f)+\overline{\dim}_B\mathcal{G}(f)\}.
    \end{align*}
\end{remark}
Mauldin and Williams \cite{Mauldin}, and later Verma and Massopust \cite{VM}, thoroughly investigated the dimensional aspects of the sum of real-valued continuous functions. Our next theorem highlights the dimension result on the metric-sum of SVFs and offers a useful technique to simplify computations for complicated SVFs.  
\begin{theorem}
  Let $f,g\in \mathcal{C}(I,\mathcal{K}(\mathbb{R}))$ such that $f$ is Lipschitz map with Lipschitz constant $L.$ Then 
  \begin{align*}
      \dim_H\mathcal{G}_*(f\oplus g)=\dim_H\mathcal{G}_*(g).
  \end{align*}
\end{theorem}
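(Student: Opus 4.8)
The strategy is to construct an explicit bi-Lipschitz bijection between the graphs $\mathcal{G}_*(g)$ and $\mathcal{G}_*(f\oplus g)$ and then invoke the invariance of the Hausdorff dimension under bi-Lipschitz maps (\cite[Corollary 2.4]{Fal}). First I would record that $f\oplus g$ is continuous: by Lemma \ref{le1}, $\mathfrak{H}\big((f\oplus g)(x),(f\oplus g)(y)\big)\le\mathfrak{H}(f(x),f(y))+\mathfrak{H}(g(x),g(y))$, which tends to $0$ as $y\to x$. Hence $\mathcal{G}_*(f\oplus g)$ is well-defined, and since both graphs are in bijection with $I$ via the first coordinate, the map $\Phi:\mathcal{G}_*(g)\to\mathcal{G}_*(f\oplus g)$ given by $\Phi(x,g(x))=(x,(f\oplus g)(x))$ is a well-defined bijection.

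For the Lipschitz bound on $\Phi$ I would estimate, for $x,y\in I$, using Lemma \ref{le1} together with $\mathfrak{H}(f(x),f(y))\le L|x-y|$,
$$d\big(\Phi(x,g(x)),\Phi(y,g(y))\big)=|x-y|+\mathfrak{H}\big(f(x)\oplus g(x),f(y)\oplus g(y)\big)\le(1+L)|x-y|+\mathfrak{H}(g(x),g(y)),$$
which is at most $(1+L)\,d\big((x,g(x)),(y,g(y))\big)$. This already yields $\dim_H\mathcal{G}_*(f\oplus g)\le\dim_H\mathcal{G}_*(g)$.

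The heart of the matter is the reverse bound, i.e.\ the Lipschitz continuity of $\Phi^{-1}$. Because $\oplus$ is not associative one cannot simply ``subtract $f$''; instead I would use Lemma \ref{11} to write $\mathfrak{H}(g(x),g(y))=\mathfrak{H}\big(f(y)\oplus g(x),f(y)\oplus g(y)\big)$, then insert $f(x)\oplus g(x)$ by the triangle inequality and bound the extra term $\mathfrak{H}\big(f(y)\oplus g(x),f(x)\oplus g(x)\big)\le\mathfrak{H}(f(x),f(y))\le L|x-y|$ by Lemma \ref{le1}. This gives $\mathfrak{H}(g(x),g(y))\le L|x-y|+\mathfrak{H}\big((f\oplus g)(x),(f\oplus g)(y)\big)$, hence $d\big((x,g(x)),(y,g(y))\big)\le(1+L)\,d\big(\Phi(x,g(x)),\Phi(y,g(y))\big)$. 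Thus $\Phi$ is bi-Lipschitz with constants $(1+L)^{-1}$ and $1+L$, and the bi-Lipschitz invariance of $\dim_H$ finishes the proof.

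The only genuine obstacle is the failure of associativity, which rules out the naive cancellation argument; it is circumvented precisely by the combination of Lemma \ref{11} and Lemma \ref{le1} above, while the remaining estimates are routine applications of the triangle inequality. (If one wishes, the same pair of maps shows the box, packing and Assouad dimensions of the two graphs also agree, since those are bi-Lipschitz invariant as well.)
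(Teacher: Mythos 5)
Your proposal is correct and follows essentially the same route as the paper: the identical graph map $(x,g(x))\mapsto(x,f(x)\oplus g(x))$, the same bi-Lipschitz constants $(1+L)^{\pm1}$ obtained from Lemma \ref{11}, the triangle inequality and the Lipschitz bound on $f$, and the bi-Lipschitz invariance of $\dim_H$. The only cosmetic difference is that you phrase the reverse estimate as an upper bound on $\mathfrak{H}(g(x),g(y))$ rather than a lower bound on the image distance, which is the same computation.
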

\begin{proof}
    Consider a map $T_f:\mathcal{G}_*(g)\rightarrow\mathcal{G}_*(f\oplus g)$ defined as
    $$T_f(x,g(x)):=(x,f(x)\oplus g(x)),$$
    where $x\in I.$ Clearly, $T_f$ is onto. In view of Lemma \ref{11}, and $f$ is Lipschitz map, notice that
    \begin{align*}
        d(T_f(x,g(x)),T_f(y,g(y)))&=d((x,f(x)\oplus g(x)),(y,f(y)\oplus g(y)))\\
        &=|x-y|+\mathfrak{H}(f(x)\oplus g(x),f(y)\oplus g(y))\\
        &\le|x-y|+\mathfrak{H}(f(x)\oplus g(x),f(y)\oplus g(x))+\\
        &\quad\mathfrak{H}(f(y)\oplus g(x),f(y)\oplus g(y))\\
         &=|x-y|+\mathfrak{H}(f(x),f(y))+\mathfrak{H}(g(x),g(y))\\
          &\le|x-y|+L|x-y|+\mathfrak{H}(g(x),g(y))\\
          &\le(1+L)\{|x-y|+\mathfrak{H}(g(x),g(y))\}\\
          &=(1+L)d((x,g(x)),(y,g(y))).
    \end{align*}
    Again, consider
    \begin{align*}
       &d(T_f(x,g(x)),T_f(y,g(y)))\\ 
        =&|x-y|+\mathfrak{H}(f(x)\oplus g(x),f(y)\oplus g(y))\\
        \ge&\frac{1}{(1+L)}\Big\{(1+L)|x-y|+\mathfrak{H}(f(x)\oplus g(x),f(y)\oplus g(y))\Big\}\\
        \ge&\frac{1}{(1+L)}\Big\{|x-y|+\mathfrak{H}(f(x),f(y))+\mathfrak{H}(f(x)\oplus g(x),f(y)\oplus g(y))\Big\}\\
        \ge&\frac{1}{(1+L)}\Big\{|x-y|+\mathfrak{H}(f(x)\oplus g(y), f(y)\oplus g(y))+\mathfrak{H}(f(x)\oplus g(x),f(y)\oplus g(y))\Big\}\\
        \ge&\frac{1}{(1+L)}\Big\{|x-y|+\mathfrak{H}(f(x)\oplus g(y),f(x)\oplus g(x))\Big\}\\
        =&\frac{1}{(1+L)}\Big\{|x-y|+\mathfrak{H}(g(y),g(x))\Big\}\\
        =&\frac{1}{(1+L)}d((x,g(x)),(y,g(y))).
    \end{align*}
    This verifies that $T_f$ is a bi-Lipschitz function. Since the Hausdorff dimension is invariant under bi-Lipschitz mapping (for details, \cite{Fal}), the proof is concluded.
\end{proof}
\begin{remark}
    Since upper and lower box dimensions are also bi-Lipschitz invariant, as a result
    \begin{align*}
        &\overline{\dim}_B\mathcal{G}_*(f\oplus g)=\overline{\dim}_B\mathcal{G}_*(g),\\
        &\underline{\dim}_B\mathcal{G}_*(f\oplus g)=\underline{\dim}_B\mathcal{G}_*(g).
    \end{align*}
\end{remark}
\begin{corollary}\label{coro1}
    If $f\in \mathcal{C}(I,\mathcal{K}(\mathbb{R}))$ is a Lipschitz function. Then $\dim_H\mathcal{G}_*(f)=1.$ 
\end{corollary}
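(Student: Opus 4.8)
The plan is to deduce this directly from the preceding theorem on the Hausdorff dimension of a metric sum, together with Lemma \ref{11} and the trivial dimension bound for a graph over an interval. The key observation is that the constant set-valued function $g \equiv \{0\}$ lies in $\mathcal{C}(I,\mathcal{K}(\mathbb{R}))$, its graph $\mathcal{G}_*(g) = \{(x,\{0\}) : x \in I\}$ is a Lipschitz (indeed isometric) image of the interval $I$, and for any Lipschitz $f$ we have $f(x) \oplus \{0\} = f(x)$ by property (1) of Note \ref{note1} (with the single summand $\{0\}$, or directly since the only metric chain is $(f(x),0)$). Hence $f \oplus g = f$ as set-valued functions.

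First I would invoke the previous theorem with this choice of $g$: since $f$ is Lipschitz with constant $L$, it gives
\begin{equation*}
    \dim_H \mathcal{G}_*(f) = \dim_H \mathcal{G}_*(f \oplus g) = \dim_H \mathcal{G}_*(g).
\end{equation*}
So it remains only to compute $\dim_H \mathcal{G}_*(g)$ for $g \equiv \{0\}$. Consider the map $P : I \to \mathcal{G}_*(g)$, $P(x) = (x,\{0\})$; by the metric $d$ on $I \times \mathcal{K}(\mathbb{R})$ one has $d(P(x),P(y)) = |x-y| + \mathfrak{H}(\{0\},\{0\}) = |x-y|$, so $P$ is an isometry. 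Since Hausdorff dimension is a bi-Lipschitz (indeed isometric) invariant and $\dim_H I = 1$ for a nondegenerate compact interval, we conclude $\dim_H \mathcal{G}_*(g) = 1$, and therefore $\dim_H \mathcal{G}_*(f) = 1$.

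There is essentially no obstacle here — the corollary is a one-line specialization of the theorem just proved. The only point requiring a moment's care is checking that $f \oplus \{0\} = f$ pointwise, i.e. that the metric linear combination with a single summand $\{0\}$ added leaves the set unchanged; this follows because $\Lambda(f(x),\{0\}) = \{(a,0) : a \in f(x)\}$ (every point of $f(x)$ has $0$ as its unique nearest point in $\{0\}$), so the set of metric chains is $\{(a,0) : a \in f(x)\}$ and the resulting sum is $\{a + 0 : a \in f(x)\} = f(x)$. Alternatively one can bypass the theorem entirely and argue directly that the map $x \mapsto (x, f(x))$ from $I$ to $\mathcal{G}_*(f)$ is bi-Lipschitz, using $|x-y| \le d((x,f(x)),(y,f(y))) = |x-y| + \mathfrak{H}(f(x),f(y)) \le (1+L)|x-y|$; this is perhaps the cleanest route and avoids the need to produce an auxiliary $g$.
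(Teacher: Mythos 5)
Your proposal is correct and follows essentially the same route as the paper: choose $g\equiv\{0\}$, observe $f\oplus g=f$ and $\dim_H\mathcal{G}_*(g)=1$, and apply the preceding theorem. Your extra verifications (that $f\oplus\{0\}=f$ pointwise and that $x\mapsto(x,\{0\})$ is an isometry) and the alternative direct bi-Lipschitz argument are sound but not needed beyond what the paper records.
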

\begin{proof}
    Choose the continuous function $g:I\to\mathcal{K}(\mathbb{R})$ as $g(x)=\{0\},~ \forall ~x\in I.$ Then, $\dim_H\mathcal{G}_*(g)=1$ and $f\oplus g=f.$ Consequently, from the previous theorem 
    $$\dim_H\mathcal{G}_*(f\oplus g)=\dim_H\mathcal{G}_*(f)=\dim_H\mathcal{G}_*(g)=1,$$
    concluding the result.
\end{proof}
\begin{lemma}\label{poly}
Let $p\in  \mathcal{C}(I,\mathcal{K}(\mathbb{R})) $ be any polynomial of degree $n$ as  
$$p(x)=A_0\oplus A_1x\oplus A_2x^2\oplus \cdots\oplus A_nx^n,$$
where $x\in I,$ and $A_0,A_1,\ldots, A_n\in \mathcal{K}_c(\mathbb{R}).$ Then $p$ is Lipschitz.
\end{lemma}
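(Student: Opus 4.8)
The plan is to reduce the estimate of $\mathfrak{H}(p(x),p(y))$ to the elementary real-valued telescoping bound $|x^i-y^i|\le i\,M^{i-1}|x-y|$, where $M:=\max_{t\in I}|t|<\infty$ because $I$ is a compact interval. First I would observe that for each $i\in\Sigma_{0,n}$ and each $x\in I$, the scaled set $A_ix^i=\{x^i a:a\in A_i\}$ is again a convex compact subset of $\mathbb{R}$, since scaling by a real number preserves both convexity and compactness; hence $A_ix^i\in\mathcal{K}_c(\mathbb{R})$. Consequently, by the associativity of the metric-sum on $\mathcal{K}_c(\mathbb{R})$ established earlier, the expression $p(x)=A_0\oplus A_1x\oplus\cdots\oplus A_nx^n=\bigoplus_{i=0}^n A_ix^i$ is unambiguous (independent of the bracketing), so $p$ is well-defined as a map into $\mathcal{K}(\mathbb{R})$.

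Next, I would fix $x,y\in I$ and apply Corollary \ref{cor1} to the $n+1$ convex compact summands $A_ix^i$ and $A_iy^i$, which yields
$$\mathfrak{H}(p(x),p(y))=\mathfrak{H}\Big(\bigoplus_{i=0}^n A_ix^i,\ \bigoplus_{i=0}^n A_iy^i\Big)\le\sum_{i=0}^n\mathfrak{H}(A_ix^i,A_iy^i).$$
Then the first inequality of Lemma \ref{12}, applied with $\alpha=x^i$, $\beta=y^i$ and $A=A_i$, bounds each term by $|A_i|\,|x^i-y^i|$. Using the factorization $|x^i-y^i|=|x-y|\,\big|\sum_{j=0}^{i-1}x^{j}y^{i-1-j}\big|\le i\,M^{i-1}|x-y|$ and noting that the $i=0$ term vanishes, I obtain
$$\mathfrak{H}(p(x),p(y))\le\Big(\sum_{i=1}^n i\,|A_i|\,M^{i-1}\Big)|x-y|,$$
so $p$ is Lipschitz with constant $L:=\sum_{i=1}^n i\,|A_i|\,M^{i-1}$.

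There is no deep obstacle here; the one point that genuinely needs care is the very first step — checking that each $A_ix^i$ lies in $\mathcal{K}_c(\mathbb{R})$ (note $x^i$ may be negative or zero, but the image of an interval under multiplication by any real scalar is still a point or an interval), so that both the associativity proposition and Corollary \ref{cor1} legitimately apply. Once that is secured, the argument is just the routine chaining of the triangle-type bound from Corollary \ref{cor1}, the scaling estimate from Lemma \ref{12}, and the elementary bound on $|x^i-y^i|$.
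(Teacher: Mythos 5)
Your proposal is correct and follows essentially the same route as the paper: bound $\mathfrak{H}(p(x),p(y))$ by $\sum_i\mathfrak{H}(A_ix^i,A_iy^i)$ via Corollary \ref{cor1}, control each term by $|A_i|\,|x^i-y^i|$ via the first inequality of Lemma \ref{12}, and finish with the Lipschitz bound on $t\mapsto t^i$ over the compact interval $I$. Your explicit check that each $A_ix^i$ lies in $\mathcal{K}_c(\mathbb{R})$ (so that Corollary \ref{cor1} applies) is a detail the paper leaves implicit, and your explicit Lipschitz constant $\sum_{i=1}^n i\,|A_i|\,M^{i-1}$ is in fact cleaner than the paper's stated one.
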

\begin{proof}
    Let $x,y\in I.$ With the reference to Corollary \ref{cor1}, we get
\begin{align*}
    &\mathfrak{H}(p(x),p(y))\\
    &=\mathfrak{H}(A_0\oplus A_1x\oplus A_2x^2\oplus \cdots\oplus A_nx^n,~A_0\oplus A_1y\oplus A_2y^2\oplus \cdots\oplus A_ny^n)\\
    &\le\mathfrak{H}(A_1x,A_1y)+\mathfrak{H}(A_2x^2,A_2y^2)+\cdots+ \mathfrak{H}(A_nx^n,A_ny^n).
\end{align*}
In view of Lemma \ref{12}, and $x^i$ is Lipschitz on $I$ with Lipschitz constant $k_i\in\mathbb{R},$ we obtain
\begin{align*}
\mathfrak{H}(p(x),p(y))
    &\le |A_1||x-y|+|A_2||x^2-y^2|+\cdots+|A_n||x^n-y^n|\\
    &\le |A_1||x-y|+|A_2|k_2|x-y|+\cdots+|A_n|k_n|x-y|\\
    &\le \max\{|A_1|,k_2|A_2|,\ldots, k_n|A_n|\}|x-y|,
\end{align*}
claiming the proof.
\end{proof}
\begin{theorem}\label{th34}
    Every polynomial in $\mathcal{C}(I,\mathcal{K}(\mathbb{R}))$ with coefficients in $\mathcal{K}_c(\mathbb{R})$ has the Hausdorff dimension equal to $1.$
\end{theorem}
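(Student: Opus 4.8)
The plan is to read this off directly from the two immediately preceding results, so the proof is short. Given a polynomial
$$p(x) = A_0 \oplus A_1 x \oplus A_2 x^2 \oplus \cdots \oplus A_n x^n, \qquad x \in I,$$
with $A_0, A_1, \dots, A_n \in \mathcal{K}_c(\mathbb{R})$, I would first invoke Lemma \ref{poly}, which already guarantees that $p$ is a Lipschitz map from $I$ into $(\mathcal{K}(\mathbb{R}),\mathfrak{H})$ — say with Lipschitz constant $L = \max\{|A_1|, k_2|A_2|, \dots, k_n|A_n|\}$, where $k_i$ denotes a Lipschitz constant of $x \mapsto x^i$ on the compact interval $I$. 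Since $p \in \mathcal{C}(I,\mathcal{K}(\mathbb{R}))$ is then Lipschitz, Corollary \ref{coro1} applies verbatim and yields $\dim_H \mathcal{G}_*(p) = 1$, which is the assertion.

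For completeness I would also spell out the two inequalities underlying Corollary \ref{coro1} in this concrete setting. The projection $\pi \colon \mathcal{G}_*(p) \to I$ given by $\pi(x,p(x)) = x$ is $1$-Lipschitz and onto, so $\dim_H \mathcal{G}_*(p) \ge \dim_H I = 1$. Conversely, the parametrisation $P \colon I \to \mathcal{G}_*(p)$, $P(x) = (x,p(x))$, satisfies
$$d(P(x),P(y)) = |x - y| + \mathfrak{H}(p(x),p(y)) \le (1 + L)|x - y|,$$
hence is Lipschitz, forcing $\dim_H \mathcal{G}_*(p) \le \dim_H I = 1$. Combining the two gives equality.

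There is essentially no obstacle here: the theorem is a corollary of Lemma \ref{poly} and Corollary \ref{coro1}. The only genuine input — that the metric-sum estimate $\mathfrak{H}(p(x),p(y)) \le \sum_{i=1}^{n}\mathfrak{H}(A_i x^i, A_i y^i)$ remains valid for sums of more than two convex summands — was already secured in Lemma \ref{poly} through the associativity of $\oplus$ on $\mathcal{K}_c(\mathbb{R})$ together with Corollary \ref{cor1}. I would also point out that the statement is genuinely about the set-valued graph $\mathcal{G}_*(p)$ rather than about $\mathcal{G}(p) \subset \mathbb{R}^2$: as soon as some value $p(x)$ is a nondegenerate interval, $\mathcal{G}(p)$ contains a two-dimensional patch and $\dim_H \mathcal{G}(p) = 2$, so the Lipschitz reduction is the right tool precisely because it is carried out at the level of the $\mathfrak{H}$-valued map.
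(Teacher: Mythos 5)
Your proposal is correct and follows exactly the paper's route: the paper also proves this theorem by simply combining Lemma \ref{poly} (the polynomial is Lipschitz) with Corollary \ref{coro1} (a Lipschitz set-valued map has $\dim_H\mathcal{G}_*=1$), omitting further details. Your explicit projection/parametrisation inequalities merely unpack what Corollary \ref{coro1} already provides, so there is nothing to add.
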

\begin{proof}
    The proof directly follows from Corollary \ref{coro1} and Lemma \ref{poly}. Hence, we omit it.
\end{proof}
\begin{lemma}\label{LM413}
Let $\mathcal{C}(I,\mathcal{K}_c(\mathbb{R}))=\{f:I\rightarrow\mathcal{K}_c(\mathbb{R}): f\text{ is continuous}\}.$ Then the set of polynomials with coefficients in $\mathcal{K}_c(\mathbb{R})$ is dense in $\mathcal{C}(I,\mathcal{K}_c(\mathbb{R})).$   
\end{lemma}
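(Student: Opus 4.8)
The plan is to reduce the statement to two applications of the classical Weierstrass (equivalently, Bernstein) approximation theorem for real functions, using the endpoint description of $\mathcal{K}_c(\mathbb{R})$. Every $K\in\mathcal{K}_c(\mathbb{R})$ is a closed interval $[\ell(K),u(K)]$, and for two such intervals $\mathfrak{H}\big([\ell_1,u_1],[\ell_2,u_2]\big)=\max\{|\ell_1-\ell_2|,\,|u_1-u_2|\}$. Consequently, for $f\in\mathcal{C}(I,\mathcal{K}_c(\mathbb{R}))$ the lower and upper \emph{envelope} functions $f_-(x):=\ell(f(x))$ and $f_+(x):=u(f(x))$ are continuous real-valued functions on the compact interval $I$ with $f_-\le f_+$; a sequence in $\mathcal{C}(I,\mathcal{K}_c(\mathbb{R}))$ converges in $d_C$ exactly when both envelope sequences converge uniformly, and conversely any pair of continuous real functions $g\le h$ on $I$ assembles into the element $x\mapsto[g(x),h(x)]$ of $\mathcal{C}(I,\mathcal{K}_c(\mathbb{R}))$.

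Given $\varepsilon>0$, I would first pick real polynomials $\tilde p_-,\tilde p_+$ with $\sup_{x\in I}|\tilde p_\pm(x)-f_\pm(x)|<\varepsilon/2$, then set $p_-:=\tilde p_--\varepsilon/2$ and $p_+:=\tilde p_++\varepsilon/2$, so that $p_-\le f_-\le f_+\le p_+$ on $I$ while still $\sup_{x\in I}|p_\pm(x)-f_\pm(x)|<\varepsilon$. Then $P(x):=[p_-(x),p_+(x)]$ lies in $\mathcal{C}(I,\mathcal{K}_c(\mathbb{R}))$ and $d_C(P,f)<\varepsilon$. The remaining task is to exhibit $P$ as a polynomial with coefficients in $\mathcal{K}_c(\mathbb{R})$. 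The key structural fact is that on $\mathcal{K}_c(\mathbb{R})$ the metric linear combination collapses to the Minkowski one ($A\oplus B=A+B$ for intervals, as the case analysis in the proof of the associativity Proposition above illustrates), so that $\bigoplus_{i=0}^{n}A_ix^i=\sum_{i=0}^{n}x^iA_i$. After translating the variable so that $I\subseteq[0,\infty)$, hence $x^i\ge0$ on $I$, this Minkowski combination is computed endpointwise, $\sum_{i}x^iA_i=\big[\sum_i x^i\ell(A_i),\ \sum_i x^i u(A_i)\big]$; choosing $A_i:=[c_i^-,c_i^+]$, where $c_i^{\pm}$ is the coefficient of $x^i$ in $p_{\pm}$, then gives $P=\bigoplus_{i=0}^{n}A_ix^i$.

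The step that is not routine is making this last choice legitimate: one needs each $A_i=[c_i^-,c_i^+]$ to be a genuine element of $\mathcal{K}_c(\mathbb{R})$, i.e.\ the two envelope approximants must be chosen so that $p_-$ has coefficient-wise no larger coefficients than $p_+$. I expect this coefficient-ordering requirement to be the real point of the proof. The natural way around it is to approximate not the envelopes directly but the midpoint $m:=\frac{1}{2}(f_-+f_+)$ and the nonnegative radius $r:=\frac{1}{2}(f_+-f_-)$ by real polynomials $P_m$ and $P_r$ and to put $p_-:=P_m-P_r$, $p_+:=P_m+P_r$, which reduces matters to a positivity-type approximation of $r$; alternatively, one may simply note that for a convex-valued $f$ the metric Bernstein polynomial satisfies $\mathcal{B}_k^Mf(x)=\big[\mathcal{B}_kf_-(x),\ \mathcal{B}_kf_+(x)\big]$ — the Bernstein basis functions being nonnegative on $I$ and the metric combination being Minkowski on $\mathcal{K}_c(\mathbb{R})$ — and that this converges uniformly to $f$ by the scalar Bernstein theorem applied to $f_\pm$, so that $\mathcal{B}_k^Mf$ itself already furnishes an admissible approximant. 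Everything else in the argument is the classical Weierstrass/Bernstein content together with the endpoint bookkeeping.
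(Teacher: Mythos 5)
Your reduction to the scalar Weierstrass theorem via the endpoint functions $f_\pm$ is sound, and your second, fallback route --- the metric Bernstein polynomial $\mathcal{B}_k^Mf=\bigl[\mathcal{B}_kf_-,\mathcal{B}_kf_+\bigr]$ --- is correct and is in substance exactly what the paper does: its entire proof is a one-line citation of the metric Bernstein approximation theorem of Dyn--Farkhi--Mokhov (Corollary 6.5 of [Dyn3]). So the viable part of your argument reproves that citation in the interval-valued setting, which is a reasonable self-contained alternative.

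Your primary route, however, has a genuine gap at precisely the step you flag, and it is worse than you suggest: the coefficient-ordering requirement is not merely ``not routine'', it is unachievable in general, so no repair along those lines can succeed. If $P=\bigoplus_{i=0}^nA_ix^i$ with $A_i=[\alpha_i,\beta_i]\in\mathcal{K}_c(\mathbb{R})$ and $I\subseteq[0,\infty)$, then $P(x)=\bigl[\sum_i\alpha_ix^i,\ \sum_i\beta_ix^i\bigr]$, so $\operatorname{diam}P(x)=\sum_i(\beta_i-\alpha_i)x^i$ is a polynomial with nonnegative coefficients and is therefore nondecreasing on $I$. Since $|\operatorname{diam}A-\operatorname{diam}B|\le 2\,\mathfrak{H}(A,B)$, no such $P$ can come within $d_C$-distance $1/4$ of, say, $f(x)=[0,1-x]$ on $[0,1]$, whose diameter is decreasing. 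In particular the midpoint--radius device fails: positivity of the radius polynomial $P_r$ as a function does not make its coefficients nonnegative (e.g.\ $1-x+\tfrac12x^2>0$ everywhere yet has a negative coefficient), and nonnegativity of the coefficients of $p_+-p_-$ is exactly what $\alpha_i\le\beta_i$ demands. The upshot is that ``polynomial with coefficients in $\mathcal{K}_c(\mathbb{R})$'' cannot be read in the monomial form $\bigoplus_iA_ix^i$ if the lemma is to hold; it must be read as a metric combination $\bigoplus_jA_jp_j(x)$ with nonnegative polynomial weights $p_j$, such as the Bernstein basis --- which is precisely the form your second route, and the paper's citation, produce. Commit to that route and discard the first.
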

\begin{proof}
 In view of \cite[Corollary 6.5]{Dyn3}, required proof is established.   
\end{proof}
Mauldin and Williams \cite{Mauldin} initiated the study of additive decomposition based on the Hausdorff dimension of a real-valued continuous function defined on a compact interval. We now proceed with a sequence of lemmas and theorems to establish the decomposition with respect to metric-sum of a continuous SVF defined on a compact interval of $\mathbb{R}.$ This technique will serve a method to deal with complicated SVFs. Before discussing our next lemma, notice that if $A,B,C\in \mathcal{K}(\mathbb{R})$ such that $A\oplus B=A\oplus C,$ then $B=C.$ For $b\in B,$ there exists $a\in A$ such that $(a,b)\in \Lambda(A,B)$ and $a+b\in A\oplus B.$ This gives $a+b\in A\oplus C$ and $(a,b)\in \Lambda(A,C).$ So, $b\in C$ and $B\subseteq C.$ On the same lines, we get $C\subseteq B,$ concluding that $B=C.$ 
\begin{lemma}\label{lemma410}
Let $\mathcal{C}_0(I,\mathcal{K}_c(\mathbb{R}))$ be a $G_\delta$-dense subset in $\mathcal{C}(I,\mathcal{K}_c(\mathbb{R})).$ Then for any $f\in \mathcal{C}(I,\mathcal{K}_c(\mathbb{R})),$ the set $\mathcal{C}_0(I,\mathcal{K}_c(\mathbb{R}))\oplus f$ defined as 
    $$\mathcal{C}_0(I,\mathcal{K}_c(\mathbb{R}))\oplus f=\{g\oplus f: g\in \mathcal{C}_0(I,\mathcal{K}_c(\mathbb{R}))\}$$
is $G_\delta$-dense.
\end{lemma}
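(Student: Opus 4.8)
The plan is to realise $\mathcal{C}_0(I,\mathcal{K}_c(\mathbb{R}))\oplus f$ as the image of $\mathcal{C}_0(I,\mathcal{K}_c(\mathbb{R}))$ under the metric-translation map
\[
\Phi_f\colon\mathcal{C}(I,\mathcal{K}_c(\mathbb{R}))\longrightarrow\mathcal{C}(I,\mathcal{K}_c(\mathbb{R}))\oplus f,\qquad \Phi_f(g)(x):=g(x)\oplus f(x),
\]
and to check that $\Phi_f$ is an isometric bijection onto its image, hence a homeomorphism. Since a homeomorphism carries open dense sets to open dense sets and commutes with countable intersections, writing $\mathcal{C}_0(I,\mathcal{K}_c(\mathbb{R}))=\bigcap_{k}U_k$ with each $U_k$ open (and dense, since the intersection is) yields $\mathcal{C}_0(I,\mathcal{K}_c(\mathbb{R}))\oplus f=\bigcap_k\Phi_f(U_k)$ with each $\Phi_f(U_k)$ open and dense in $\mathcal{C}(I,\mathcal{K}_c(\mathbb{R}))\oplus f$, which is exactly the assertion.

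First I would verify that $\Phi_f$ is well defined. Continuity of $x\mapsto g(x)\oplus f(x)$ is immediate from Lemma \ref{le1}, since $\mathfrak{H}\big(g(x)\oplus f(x),g(y)\oplus f(y)\big)\le\mathfrak{H}(g(x),g(y))+\mathfrak{H}(f(x),f(y))$; and $g(x)\oplus f(x)$ is again a compact interval because, for $A,B\in\mathcal{K}_c(\mathbb{R})$, the nearest-point projection onto an interval is single-valued, so $\Lambda(A,B)=\{(\pi_A(y),y):y\in B\}\cup\{(x,\pi_B(x)):x\in A\}$ is the union of two continuous images of the connected compacta $B$ and $A$ that meet at a closest-pair point; hence $\Lambda(A,B)$ is connected and compact, and so is its image $A\oplus B$ under $(x,y)\mapsto x+y$, i.e.\ a compact interval. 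Next, commutativity of $\oplus$ together with Lemma \ref{11} gives, for all $g,g_*$,
\[
d_C(\Phi_f g,\Phi_f g_*)=\sup_{x\in I}\mathfrak{H}\big(f(x)\oplus g(x),f(x)\oplus g_*(x)\big)=\sup_{x\in I}\mathfrak{H}(g(x),g_*(x))=d_C(g,g_*),
\]
so $\Phi_f$ is an isometric embedding; in particular it is injective, and its inverse on the image $\mathcal{C}(I,\mathcal{K}_c(\mathbb{R}))\oplus f$ is again an isometry, hence continuous. Thus $\Phi_f$ is a homeomorphism onto $\mathcal{C}(I,\mathcal{K}_c(\mathbb{R}))\oplus f$, which, being an isometric copy of a complete space, is itself a complete — in particular Baire — metric space, so the $G_\delta$-density passes over as claimed.

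I expect the only genuinely non-formal ingredient to be the convex-valuedness of $g\oplus f$, i.e.\ that the metric sum of two compact intervals of $\mathbb{R}$ is a compact interval (the connectedness argument above); once that is in place everything reduces to the two transport lemmas (Lemmas \ref{le1} and \ref{11}) and the soft fact that isometries are homeomorphisms onto their images. One conceptual point worth stressing — and the reason the homeomorphism lands on $\mathcal{C}(I,\mathcal{K}_c(\mathbb{R}))\oplus f$ rather than on the whole space — is that metric-summing with an interval-valued $f$ can only enlarge the widths of the values, so $\mathcal{C}(I,\mathcal{K}_c(\mathbb{R}))\oplus f$ is a proper closed subspace whenever $f$ is not single-valued; accordingly the $G_\delta$-density of $\mathcal{C}_0(I,\mathcal{K}_c(\mathbb{R}))\oplus f$ is to be read inside that subspace, which is all that the subsequent decomposition argument requires.
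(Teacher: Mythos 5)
Your reduction to the translation map $\Phi_f$ is the right instinct, and the isometry computation via Lemma \ref{11} is correct, but as written the argument proves a strictly weaker statement than Lemma \ref{lemma410}, and your own closing observation shows the gap cannot be closed. For compact intervals the metric sum satisfies $[a,\overline{a}]\oplus[c,\overline{c}]=[a+c,\overline{a}+\overline{c}]$, so diameters add: $|(g\oplus f)(x)|=|g(x)|+|f(x)|\ge |f(x)|$. Hence, exactly as you note, whenever $f$ takes a non-degenerate interval value at some $x_0$, the image $\mathcal{C}(I,\mathcal{K}_c(\mathbb{R}))\oplus f$ is a \emph{proper closed} subspace of $\mathcal{C}(I,\mathcal{K}_c(\mathbb{R}))$; for instance every $g\oplus f$ satisfies $\mathfrak{H}\big((g\oplus f)(x_0),\{0\}\big)\ge |f(x_0)|/2$, so the image is not dense, and therefore no subset of it --- in particular $\mathcal{C}_0(I,\mathcal{K}_c(\mathbb{R}))\oplus f$ --- can be dense in $\mathcal{C}(I,\mathcal{K}_c(\mathbb{R}))$. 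Your proof yields $G_\delta$-density only inside the subspace $\mathcal{C}(I,\mathcal{K}_c(\mathbb{R}))\oplus f$, whereas the lemma is invoked in Theorem \ref{thm411} precisely to conclude that $\mathcal{D}_1$ and $\mathcal{D}_1\oplus f$ are two dense $G_\delta$ subsets of the \emph{same} complete space and hence intersect by Baire category. With your reading one would instead need $\mathcal{D}_1\cap\big(\mathcal{C}(I,\mathcal{K}_c(\mathbb{R}))\oplus f\big)$ to be comeager in that closed subspace, which does not follow: a dense $G_\delta$ of the ambient space restricted to a proper closed subspace need not even be dense there. So the sentence ``which is all that the subsequent decomposition argument requires'' is the step that fails.

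For comparison, the paper's proof sidesteps the subspace issue only by asserting that $T(g)=g\oplus f$ is a \emph{bijection} of $\mathcal{C}(I,\mathcal{K}_c(\mathbb{R}))$ onto itself with inverse $\phi\mapsto\phi\oplus(-f)$, and that $N_\delta(h\oplus f)=N_\delta(h)\oplus f$. Your diameter computation refutes exactly this: $(g\oplus f)\oplus(-f)$ has values of width $|g(x)|+2|f(x)|$, so $T$ is not surjective and $\phi\oplus(-f)$ does not invert it (consistent with the paper's own earlier observation that $(A\oplus B)\oplus(-B)\ne A$); one can also check that the identity $\mathfrak{H}\big(\phi\oplus(-F),H\big)=\mathfrak{H}\big(\phi,H\oplus F\big)$ used there fails, e.g.\ for $\phi=[0,2]$, $F=[0,1]$, $H=\{0\}$ the two sides are $2$ and $1$. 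In short, you have correctly located the real obstruction, but the honest conclusion is that Lemma \ref{lemma410}, in the form required by Theorem \ref{thm411}, is false for any $f$ that is not singleton-valued; it cannot be rescued by silently relativizing ``dense'' to the image. A genuine repair must change the statement or the downstream argument --- for example restricting to singleton-valued $f$, where $g\mapsto g\oplus f$ really is a surjective isometry of the whole space.
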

\begin{proof}
Since $\mathcal{C}_0(I,\mathcal{K}_c(\mathbb{R}))$ is given to be $G_\delta$-dense, there exist $U_i\ (i\in \mathbb{N})$ open and dense sets in $\mathcal{C}(I,\mathcal{K}_c(\mathbb{R}))$ such that 
$$\mathcal{C}_0(I,\mathcal{K}_c(\mathbb{R}))=\bigcap_{i=1}^\infty U_i.$$
Our initial target is to show $\mathcal{C}_0(I,\mathcal{K}_c(\mathbb{R}))\oplus f=\bigcap_{i=1}^\infty (U_i\oplus f).$
\begin{align*}
    g\oplus f\in \mathcal{C}_0(I,\mathcal{K}_c(\mathbb{R}))\oplus f\implies g\in \mathcal{C}_0(I,\mathcal{K}_c(\mathbb{R}))\implies g\in U_i, \forall i\implies g\oplus f\in U_i\oplus f,\forall ~i \in \mathbb{N}.
\end{align*}
Therefore, $\mathcal{C}_0(I,\mathcal{K}_c(\mathbb{R}))\oplus f\subseteq \bigcap_{i=1}^\infty (U_i\oplus f).$ Again
\begin{align*}
    g'\oplus f\in\bigcap_{i=1}^\infty (U_i\oplus f)\implies g'\oplus f\in U_i\oplus f, \forall ~i \in \mathbb{N}\implies g'\in U_i, \forall ~i \in \mathbb{N}\implies g'\in \mathcal{C}_0(I,\mathcal{K}_c(\mathbb{R})).
\end{align*}
This gives $\bigcap_{i=1}^\infty (U_i\oplus f)\subseteq \mathcal{C}_0(I,\mathcal{K}_c(\mathbb{R}))\oplus f.$
Also $U_i$ is open subset of $\mathcal{C}(I,\mathcal{K}_c(\mathbb{R})),$ for any $h\in U_i,$ there exists $\delta>0$ such that $N_\delta(h)\subseteq  U_i,$ where $N_\delta(h)=\{g\in\mathcal{C}(I,\mathcal{K}_c(\mathbb{R})): d_C(g,h)<\delta \}.$ Consider $$d_C(g\oplus f,h\oplus f)=\sup_{x\in I}\mathfrak{H}(g(x)\oplus f(x),h(x)\oplus f(x))=\sup_{x\in I}\mathfrak{H}(g(x),h(x))=d_C(g,h).$$ 
So, if $g\in N_\delta(h)$ implies $g\oplus f\in N_\delta(h\oplus f).$ Consider $\phi\in N_\delta(h\oplus f),$ then 
\begin{align*}
   d_C(\phi\oplus(-f),h)&=\sup_{x\in I}\mathfrak{H}\Big(\phi(x)\oplus(-f(x)),h(x)\Big)\\ 
   &=\sup_{x\in I}\mathfrak{H}\Big(\phi(x),h(x)\oplus f(x)\Big)\\ 
   &=d_C(\phi,h\oplus f).
\end{align*}
This verifies that $\phi\oplus(-f)\in N_\delta(h).$ As a result, $N_\delta(h\oplus f)=N_\delta(h)\oplus f,$ and $U_i\oplus f$ is an open subset of $\mathcal{C}(I,\mathcal{K}_c(\mathbb{R})).$ It is remaining to show that $U_i\oplus f$ is dense. Define a map $T:\mathcal{C}(I,\mathcal{K}_c(\mathbb{R}))\rightarrow\mathcal{C}(I,\mathcal{K}_c(\mathbb{R}))$ as $T(g)=g\oplus f.$ It is straightforward to observe that $T$ is bijective and continuous on $\mathcal{C}(I,\mathcal{K}_c(\mathbb{R})).$ Then
$$T(\overline{U_i})\subseteq \overline{T(U_i)}.$$
Also, for any $h\in \overline{T(U_i)} $ and $\epsilon>0,$ there exists $h_*\in T(U_i)$ such that $$d_C(h,h_*)<\epsilon.$$
Since $h_*\in T(U_i),$ we get $h'\in U_i$ with $T(h')=h_*.$ As a consequence 
\begin{align*}
    d_C(h,h_*)=d_C(h,h'\oplus f)=d_C(h\oplus(-f),h')<\epsilon.
\end{align*}
So, $h\oplus(-f)\in \overline{U_i}$ and $h\in T(\overline{U_i}).$ Hence, $U_i\oplus f$ is dense in $\mathcal{C}(I,\mathcal{K}_c(\mathbb{R})).$ This completes the proof.
\end{proof}
In the upcoming theorem, the decomposition of a continuous SVF is exhibited. 
\begin{theorem}\label{thm411}
The set $\mathcal{D}_1=\{f\in \mathcal{C}(I,\mathcal{K}_c(\mathbb{R})): \dim_H\mathcal{G}_*(f)=1\}$ is a $G_\delta$-dense subset of $\mathcal{C}(I,\mathcal{K}_c(\mathbb{R})).$ Furthermore, if $f\in\mathcal{C}(I,\mathcal{K}_c(\mathbb{R})),$ then $g=h\oplus f,$ for some $g,h\in \mathcal{D}_1.$    
\end{theorem}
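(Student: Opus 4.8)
The plan is to prove the $G_\delta$-density of $\mathcal{D}_1$ first, since the decomposition ``$g=h\oplus f$'' then drops out quickly by combining Lemma \ref{lemma410} with the Baire category theorem in $(\mathcal{C}(I,\mathcal{K}_c(\mathbb{R})),d_C)$, which is complete as a closed subspace of $(\mathcal{C}(I,\mathcal{K}(\mathbb{R})),d_C)$ (uniform limits of convex-interval-valued continuous maps are again of this type, and $\mathcal{K}_c(\mathbb{R})$ is closed in $(\mathcal{K}(\mathbb{R}),\mathfrak{H})$). Density of $\mathcal{D}_1$ is immediate: by Theorem \ref{th34} every polynomial with coefficients in $\mathcal{K}_c(\mathbb{R})$ lies in $\mathcal{D}_1$, and by Lemma \ref{LM413} these polynomials are dense in $\mathcal{C}(I,\mathcal{K}_c(\mathbb{R}))$.

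For the $G_\delta$ part I would first record that the projection $(x,f(x))\mapsto x$ is a surjective $1$-Lipschitz map of $\mathcal{G}_*(f)$ onto $I$, so $\dim_H\mathcal{G}_*(f)\ge 1$ always and hence $\mathcal{D}_1=\{f:\dim_H\mathcal{G}_*(f)\le 1\}$. Using the Hausdorff content $H^s_\infty(A)=\inf\{\sum_i|U_i|^s:A\subseteq\bigcup_iU_i\}$ (covers of unrestricted diameter) and the standard fact that $H^s(A)=0\iff H^s_\infty(A)=0$, one checks that $\dim_H\mathcal{G}_*(f)\le 1$ precisely when $H^{1+1/k}_\infty(\mathcal{G}_*(f))<1/m$ for all $k,m\in\mathbb{N}$, so that
$$\mathcal{D}_1=\bigcap_{k,m\in\mathbb{N}}V_{k,m},\qquad V_{k,m}:=\Big\{f\in\mathcal{C}(I,\mathcal{K}_c(\mathbb{R})):H^{1+1/k}_\infty(\mathcal{G}_*(f))<\tfrac1m\Big\}.$$
It then remains only to show that each $V_{k,m}$ is open, and this is the step I expect to be the main obstacle: one must convert an arbitrary economical cover of $\mathcal{G}_*(f)$ into a uniform ``safety margin'' that still covers the graph of every $d_C$-nearby function.

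To prove $V_{k,m}$ open, fix $f\in V_{k,m}$, put $s=1+1/k$, and choose a cover $\{U_i\}$ of $\mathcal{G}_*(f)$ with $\sum_i|U_i|^s<1/m$. Replace each $U_i$ by an open $\varepsilon_i$-neighbourhood, with $\varepsilon_i$ so small (using $|(U)_{\varepsilon}|\le|U|+2\varepsilon$ and continuity of $t\mapsto(t+2\varepsilon)^s$) that still $\sum_i|(U_i)_{\varepsilon_i}|^s<1/m$. Since $\mathcal{G}_*(f)$ is compact (the continuous image of $I$), finitely many of these open sets cover it, and a compactness (Lebesgue-number) argument yields $\eta>0$ with the $\eta$-neighbourhood of $\mathcal{G}_*(f)$ contained in that finite union. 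If $d_C(f,g)<\eta$, then $d\big((x,f(x)),(x,g(x))\big)=\mathfrak{H}(f(x),g(x))<\eta$ for every $x\in I$, so $\mathcal{G}_*(g)$ lies in the $\eta$-neighbourhood of $\mathcal{G}_*(f)$, hence inside the same finite union; consequently $H^s_\infty(\mathcal{G}_*(g))<1/m$ and $g\in V_{k,m}$. Thus each $V_{k,m}$ is open, and $\mathcal{D}_1$ is a dense $G_\delta$ set.

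For the final assertion, given $f\in\mathcal{C}(I,\mathcal{K}_c(\mathbb{R}))$, apply Lemma \ref{lemma410} to the $G_\delta$-dense set $\mathcal{D}_1$ to conclude that $\mathcal{D}_1\oplus f$ is also $G_\delta$-dense. By the Baire category theorem the intersection $\mathcal{D}_1\cap(\mathcal{D}_1\oplus f)$ is dense, in particular nonempty; choosing any $g$ in it gives $g\in\mathcal{D}_1$ together with $g=h\oplus f$ for some $h\in\mathcal{D}_1$, which is exactly the desired decomposition.
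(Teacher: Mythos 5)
Your proof is correct and follows essentially the same route as the paper: density of $\mathcal{D}_1$ from Theorem \ref{th34} and Lemma \ref{LM413}, the $G_\delta$ structure by writing $\mathcal{D}_1$ as a countable intersection of sets on which a Hausdorff quantity is small, and the decomposition from Lemma \ref{lemma410} together with the Baire category theorem. Your Hausdorff-content and Lebesgue-number argument in fact supplies the openness details that the paper only asserts when it claims that $\mathcal{C}_\beta=\{f:\ H^\beta\mathcal{G}_*(f)=0\}$ is a $G_\delta$ subset.
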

\begin{proof}
For $\beta>1,$ consider a set $\mathcal{C}_\beta=\{f\in \mathcal{C}(I,\mathcal{K}_c(\mathbb{R})): ~H^\beta\mathcal{G}_*(f)=0\}.$ Then $\mathcal{C}_\beta$ is a $G_\delta$-subset of $\mathcal{C}(I,\mathcal{K}_c(\mathbb{R})).$ 
Using Theorem \ref{th34} and the definition of $\beta$-dimensional Hausdorff measure, it follows that the set of polynomials with coefficients as a compact interval of $\mathbb{R}$, with respect to metric-sum, is contained in $ \mathcal{C}_\beta.$ In view of Lemma \ref{LM413}, $\mathcal{C}_\beta$ is dense in $\mathcal{C}(I,\mathcal{K}_c(\mathbb{R})).$ Also, $\mathcal{D}_1=\bigcap_{n\in\mathbb{N}}\mathcal{C}_{1+\frac{1}{n}}.$ It follows from Lemma \ref{lemma410}, for any $f\in \mathcal{C}(I,\mathcal{K}_c(\mathbb{R})),$ $\mathcal{C}_\beta\oplus f$ is $G_\delta$-dense. Thus, $\mathcal{D}_1$ and $\mathcal{D}_1\oplus f$ are $G_\delta$-dense subsets of $\mathcal{C}(I,\mathcal{K}_c(\mathbb{R})).$ Now, $\mathcal{D}_1\cap\mathcal{D}_1\oplus f\neq\emptyset,$ otherwise contradiction occurs using the Baire category theorem. Therefore, there exist $g,h\in \mathcal{D}_1$ with $g=h\oplus f$, this completes the proof.
\end{proof}
\begin{corollary}
If $f\in\mathcal{C}(I,\mathcal{K}_c(\mathbb{R})),$ then there exist $g,h\in\mathcal{D}_1$ such that $f=g\oplus h.$      
\end{corollary}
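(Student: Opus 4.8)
The plan is to read this off from Theorem~\ref{thm411} by a one-line rearrangement, after noting that the class $\mathcal{D}_1$ is invariant under the reflection $h\mapsto -h$, where $(-h)(x):=(-1)\cdot h(x)$. I would establish this invariance first. If $h\in\mathcal{C}(I,\mathcal{K}_c(\mathbb{R}))$, then $-h$ again takes convex compact values and is continuous, since reflecting an interval through the origin preserves convexity and compactness and $\mathfrak{H}(-A,-B)=\mathfrak{H}(A,B)$. Moreover the map $(x,A)\mapsto(x,-A)$ is an isometry of $\big(I\times\mathcal{K}(\mathbb{R}),d\big)$ onto itself that carries $\mathcal{G}_*(h)$ bijectively onto $\mathcal{G}_*(-h)$; since Hausdorff dimension is an isometric invariant, $\dim_H\mathcal{G}_*(-h)=\dim_H\mathcal{G}_*(h)$. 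Hence $h\in\mathcal{D}_1$ forces $-h\in\mathcal{D}_1$.

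Next I would apply Theorem~\ref{thm411} to the given $f\in\mathcal{C}(I,\mathcal{K}_c(\mathbb{R}))$ to obtain $g,h_0\in\mathcal{D}_1$ with $g=h_0\oplus f$, the metric sum of functions being taken pointwise. Adding $-h_0$ to both sides and using that $\oplus$ is commutative and associative on convex compact subsets of $\mathbb{R}$ (hence pointwise on such functions), one gets
\[
g\oplus(-h_0)=(h_0\oplus f)\oplus(-h_0)=f\oplus\big(h_0\oplus(-h_0)\big).
\]
Applying Note~\ref{note1}(1) pointwise, $h_0(x)\oplus(-h_0)(x)=(1-1)\cdot h_0(x)=\{0\}$, so the right-hand side equals $f$. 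Setting $h:=-h_0$, the first paragraph gives $h\in\mathcal{D}_1$, and therefore $f=g\oplus h$ with $g,h\in\mathcal{D}_1$, which is exactly the claim.

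No genuine obstacle arises here: the proof is essentially a rewriting of the conclusion $g=h_0\oplus f$ of Theorem~\ref{thm411}. The two points that merit a remark are the pointwise use of associativity of the metric sum (legitimate since every value $h_0(x),f(x)$ lies in $\mathcal{K}_c(\mathbb{R})$, where associativity was proved) and the closure of $\mathcal{D}_1$ under negation, which is the isometry observation above.
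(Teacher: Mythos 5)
There is a genuine gap at the cancellation step, and it is not merely notational. Your key identity $h_0(x)\oplus(-h_0)(x)=\{0\}$ is false when $(-h_0)(x)$ is, as you define it, the reflected set $(-1)\cdot h_0(x)$ and $\oplus$ is the binary metric sum. Note~\ref{note1}(1) concerns the metric linear combination $\bigoplus_{j}\lambda_j A$ of identical copies of $A$: there the chains run over $\text{Ch}(A,\ldots,A)$, and since $\Lambda(A,A)=\{(a,a):a\in A\}$ they are diagonal, which is why the scalars collapse to $\sum_j\lambda_j$. It says nothing about the metric sum of $A$ with the \emph{different} set $-A$. Concretely, for $A=[0,1]$ one has $-A=[-1,0]$, $\Lambda(A,-A)=\{(a,0):a\in A\}\cup\{(0,b):b\in -A\}$, hence $A\oplus(-A)=[-1,1]\neq\{0\}$; in general $[c,d]\oplus[-d,-c]=[c-d,d-c]$. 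Your associativity step is legitimate (the paper's proposition applies to the three convex compact sets $h_0(x),f(x),-h_0(x)$), but it then yields $f(x)\oplus[c-d,d-c]$, which is a strictly larger interval than $f(x)$ whenever $h_0(x)$ is not a singleton. Your isometry argument for $\dim_H\mathcal{G}_*(-h_0)=\dim_H\mathcal{G}_*(h_0)$ is fine; it is the cancellation that breaks.

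Reinterpreting $g\oplus(-h_0)$ as the signed metric combination $\{u-v:(u,v)\in\Lambda(g(x),h_0(x))\}$ does not repair this: with $f(x)=[3,5]$ and $h_0(x)=[0,10]$ one gets $g(x)=h_0(x)\oplus f(x)=[3,15]$ and $\{u-v:(u,v)\in\Lambda([3,15],[0,10])\}=[0,5]\neq[3,5]$ (the identity $A\oplus B\oplus(-B)=A$ in the paper's remark is for the three-fold combination with chains on $(A,B,B)$, not for the iterated binary operation, and the associativity proposition is only proved for coefficients $+1$). In fact no choice of $h$ whatsoever can work with the $g$ supplied by Theorem~\ref{thm411} unless $h_0$ happens to be singleton-valued: for interval values the metric sum is the Minkowski sum, so $|A\oplus B|=|A|+|B|$, and $f=g\oplus h$ would force $|f(x)|=|g(x)|+|h(x)|\ge|g(x)|=|h_0(x)|+|f(x)|$, i.e.\ $|h_0(x)|=0$ for all $x$. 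The paper gives no proof of this corollary, and your rearrangement is almost certainly the intended one, but as it stands the corollary does not follow from Theorem~\ref{thm411} this way; one would need either a decomposition in which the diameters genuinely add up (for instance a singleton-valued summand $h(x)=\{c(x)\}$ with $g=f\oplus\{-c\}$, reducing the problem to a Mauldin--Williams type statement for the real function $c$), or a reformulation of what $g\oplus h$ is meant to denote.
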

\begin{lemma}
For any $\beta>1,$ the set $\mathcal{D}_\beta=\{f\in \mathcal{C}(I,\mathcal{K}_c(\mathbb{R})): \dim_H\mathcal{G}_*(f)=\beta\}$ is dense in $\mathcal{C}(I,\mathcal{K}_c(\mathbb{R})).$ 
    \end{lemma}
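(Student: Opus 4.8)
The plan is to reduce the statement to the case of a very explicit ``model'' function, exploiting the theorem already proved that $\dim_H\mathcal{G}_*(f\oplus g)=\dim_H\mathcal{G}_*(g)$ whenever $f$ is Lipschitz, together with the density of polynomials with coefficients in $\mathcal{K}_c(\mathbb{R})$ (Lemma \ref{LM413}) and the fact that such polynomials are Lipschitz (Lemma \ref{poly}). Concretely, I would fix $f\in\mathcal{C}(I,\mathcal{K}_c(\mathbb{R}))$ and $\epsilon>0$ and produce $\tilde f\in\mathcal{D}_\beta$ with $d_C(f,\tilde f)<\epsilon$ by writing $\tilde f=p\oplus g_\beta$, where $p$ is a polynomial approximating $f$ to within $\epsilon/2$ and $g_\beta$ is a small, explicitly chosen continuous set-valued function whose graph has Hausdorff dimension exactly $\beta$.

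For the model function I would start from a classical real-valued continuous function $w_\beta\colon I\to\mathbb{R}$ whose graph $\mathcal{G}(w_\beta)\subset\mathbb{R}^2$ has Hausdorff dimension exactly $\beta$; for $1<\beta\le 2$ such functions are well known (for instance a suitable Weierstrass-type function or an affine fractal interpolation function, see \cite{Fal}). For any scalar $c\neq 0$ the set-valued function $x\mapsto\{c\,w_\beta(x)\}$ lies in $\mathcal{C}(I,\mathcal{K}_c(\mathbb{R}))$, and since $\mathfrak{H}(\{s\},\{t\})=|s-t|$ its graph $\mathcal{G}_*$ is bi-Lipschitz equivalent to $\mathcal{G}(c\,w_\beta)\subset\mathbb{R}^2$, which in turn is bi-Lipschitz to $\mathcal{G}(w_\beta)$ via $(x,y)\mapsto(x,cy)$; hence its Hausdorff dimension is $\beta$ for every $c\neq 0$. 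I would then fix $c$ so small that $\sup_{x\in I}|c\,w_\beta(x)|<\epsilon/2$ and set $g_\beta(x)=\{c\,w_\beta(x)\}$.

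To assemble the proof, choose by Lemma \ref{LM413} a polynomial $p$ with $d_C(f,p)<\epsilon/2$, which is Lipschitz by Lemma \ref{poly}, and put $\tilde f=p\oplus g_\beta$. Since each $p(x)$ is a compact interval and each $g_\beta(x)$ a singleton, $\tilde f(x)$ is again a compact interval, so $\tilde f\in\mathcal{C}(I,\mathcal{K}_c(\mathbb{R}))$. Because $p(x)\oplus\{0\}=p(x)$, Lemma \ref{11} gives $\mathfrak{H}(p(x),\tilde f(x))=\mathfrak{H}(p(x)\oplus\{0\},p(x)\oplus g_\beta(x))=\mathfrak{H}(\{0\},g_\beta(x))=|c\,w_\beta(x)|$, so $d_C(p,\tilde f)<\epsilon/2$ and therefore $d_C(f,\tilde f)<\epsilon$. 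Finally, since $p$ is Lipschitz, the dimension theorem for metric sums yields $\dim_H\mathcal{G}_*(\tilde f)=\dim_H\mathcal{G}_*(p\oplus g_\beta)=\dim_H\mathcal{G}_*(g_\beta)=\beta$, i.e.\ $\tilde f\in\mathcal{D}_\beta$. As $f$ and $\epsilon$ were arbitrary, $\mathcal{D}_\beta$ is dense.

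The one genuinely non-routine ingredient is the existence of the model function $g_\beta$ — a continuous function with graph of prescribed \emph{Hausdorff} (not merely box) dimension $\beta$ and controllable sup-norm; everything else is bookkeeping with Lemmas \ref{11}, \ref{poly}, \ref{LM413} and the metric-sum dimension theorem. For $2<\beta\le 3$ (the largest range in which $\mathcal{G}_*(f)$, viewed via its bi-Lipschitz embedding $x\mapsto(x,\ell(x),u(x))$ into $\mathbb{R}^3$ for $f(x)=[\ell(x),u(x)]$, can have dimension $\beta$) the same scheme works with $g_\beta(x)=[c\,w(x),\,c\,w(x)+1+c\,v(x)]$ for two suitably ``independent'' fractal functions $w,v$; there the delicate point is verifying that the joint graph $\{(x,w(x),v(x))\}$ attains the desired dimension.
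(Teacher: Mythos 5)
Your proof is correct and follows essentially the same route as the paper's: approximate the given function by a set-valued polynomial $p$ (Lemma \ref{LM413}), which is Lipschitz (Lemma \ref{poly}), and then perturb by a small summand of Hausdorff dimension $\beta$, using Lemma \ref{11} to control $d_C(p,p\oplus g_\beta)$ and the metric-sum dimension theorem to conclude $\dim_H\mathcal{G}_*(p\oplus g_\beta)=\beta$. The one substantive difference is where the dimension-$\beta$ summand comes from: the paper begins ``for any $f\in\mathcal{D}_\beta$'' and rescales it by $\tfrac{\epsilon}{2\|f\|_C}$, thereby tacitly assuming $\mathcal{D}_\beta\neq\emptyset$ (without which the density claim is vacuous), whereas you construct a witness explicitly as a singleton-valued function $x\mapsto\{c\,w_\beta(x)\}$ built from a real-valued continuous function with graph of Hausdorff dimension $\beta$, noting that $\mathcal{G}_*$ of such a function is bi-Lipschitz equivalent to the planar graph. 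This closes a genuine gap in the paper's argument for $1<\beta\le 2$, and your closing observation that interval-valued $f$ forces $\dim_H\mathcal{G}_*(f)\le 3$ (so the lemma needs a restriction on $\beta$, and the case $2<\beta\le 3$ requires a separate, more delicate witness) is a point the paper overlooks entirely.
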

\begin{proof}
Let $g\in\mathcal{C}(I,\mathcal{K}_c(\mathbb{R})).$ So, for any $\epsilon>0,$ there exists a polynomial $p\in\mathcal{C}(I,\mathcal{K}_c(\mathbb{R}))$ such that 
$$d_C(g,p)<\frac{\epsilon}{2}.$$
For any $f\in\mathcal{D}_\beta,$ consider $h=p\oplus \frac{\epsilon}{2\|f\|_C}f.$ Since $p$ is a Lipschitz function, we get
$$\dim_H\mathcal{G}_*(h)=\dim_H\mathcal{G}_*(f)=\beta.$$
Observe that 
\begin{align*}
   d_C(p,h)&=\sup_{x\in I}\mathfrak{H}(p(x),h(x))\\&=\sup_{x\in I}\mathfrak{H}\Big(p(x),p(x)\oplus\frac{\epsilon}{2\|f\|_C}f(x)\Big)\\
   &=\sup_{x\in I}\mathfrak{H}\Big(\{0\},\frac{\epsilon}{2\|f\|_C}f(x)\Big)\\&\le \frac{\epsilon}{2\|f\|_C}\sup_{x\in I}\mathfrak{H}(\{0\},f(x))=\frac{\epsilon}{2}.
\end{align*}
In view of the triangle inequality, we have
$$d_C(g,h)\le d_C(g,p)+d_C(p,h)<\frac{\epsilon}{2}+\frac{\epsilon}{2}=\epsilon,$$ establishing the claim. 
    
\end{proof}
\begin{lemma}\label{l413}
    Let $X$ be a compact subset of $I.$ Then any continuous function $f:X\to\mathcal{K}_c(\mathbb{R})$ can be extended continuously to $\tilde{f}:I\to\mathcal{K}_c(\mathbb{R})$ such that 
$$\dim_H\mathcal{G}_*(\tilde{f})=\max\{1,\dim_H\mathcal{G}_*(f)\}.$$
\end{lemma}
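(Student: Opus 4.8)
The plan is to extend $f$ across the relatively open set $I\setminus X$ by a piecewise‑affine interval‑valued interpolation, and then to compute $\dim_H\mathcal{G}_*(\tilde f)$ by splitting the graph into the original graph $\mathcal{G}_*(f)$ together with countably many Lipschitz pieces over the complementary intervals.

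First I would write $I\setminus X$ as an at most countable union of pairwise disjoint intervals that are open in $I$; the bounded ones have the form $(a_k,b_k)$ with $a_k,b_k\in X$, while at most two are of the form $[\min I,\min X)$ or $(\max X,\max I]$, on which I extend $f$ by the constants $f(\min X)$, resp.\ $f(\max X)$. On a bounded gap $(a_k,b_k)$, writing $f(a_k)=[\alpha_k,\beta_k]$ and $f(b_k)=[\gamma_k,\delta_k]$ (genuine compact intervals, since coefficients lie in $\mathcal{K}_c(\mathbb{R})$), I set for $t\in[0,1]$
\[
\tilde f\big(a_k+t(b_k-a_k)\big):=\big[(1-t)\alpha_k+t\gamma_k,\ (1-t)\beta_k+t\delta_k\big],
\]
i.e.\ the Minkowski convex combination $\tfrac{b_k-x}{b_k-a_k}f(a_k)+\tfrac{x-a_k}{b_k-a_k}f(b_k)$; one checks that the right endpoint minus the left endpoint is a convex combination of $\beta_k-\alpha_k\ge0$ and $\delta_k-\gamma_k\ge0$, so $\tilde f$ is $\mathcal{K}_c(\mathbb{R})$‑valued, and on $[a_k,b_k]$ it is Lipschitz with constant $\mathfrak{H}(f(a_k),f(b_k))/(b_k-a_k)$ since each endpoint is affine in $x$. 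Putting $\tilde f=f$ on $X$ yields a well‑defined map $\tilde f:I\to\mathcal{K}_c(\mathbb{R})$ that agrees with $f$ at every gap endpoint.

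Next I would prove $\tilde f$ is continuous. Continuity on the interior of each gap is clear, approach to $x_0$ through $X$ is handled by uniform continuity of $f$ on the compact set $X$, and at a gap endpoint the one‑sided limit along the adjacent affine piece is the correct value $f$ at that point. The only delicate case is $x_0\in X$ that is an accumulation point of gap endpoints: given $\varepsilon>0$ pick $\delta>0$ with $\mathfrak{H}(f(u),f(v))<\varepsilon$ whenever $u,v\in X$, $|u-v|<\delta$; since the gaps are disjoint and $I$ bounded, only finitely many have length $\ge\delta/2$, so one can choose $\eta\in(0,\delta/2)$ so small that $(x_0-\eta,x_0+\eta)$ meets no such long gap except possibly one having $x_0$ as an endpoint. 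For $|x-x_0|<\eta$ with $x\in(a_k,b_k)$ a short gap, the elementary bound $\mathfrak{H}(\tilde f(x),f(a_k))\le\mathfrak{H}(f(a_k),f(b_k))$ together with $|a_k-x_0|<\delta$ gives $\mathfrak{H}(\tilde f(x),f(x_0))\le\mathfrak{H}(f(a_k),f(b_k))+\mathfrak{H}(f(a_k),f(x_0))\le 2\varepsilon$, the long‑endpoint case being absorbed by affine continuity. This bookkeeping—arranging the estimates so only finitely many ``long'' gaps need separate treatment—is the step I expect to be the main obstacle.

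Finally, for the dimension: $\mathcal{G}_*(\tilde f)=\mathcal{G}_*(f)\cup\bigcup_k\mathcal{G}_*\big(\tilde f|_{[a_k,b_k]}\big)$ is a countable union, together with the (at most two) constant end pieces. For each $k$ the graph map $x\mapsto(x,\tilde f(x))$ on $[a_k,b_k]$ is bi‑Lipschitz—Lipschitz by the construction, and bounded below since $d\big((x,\tilde f(x)),(y,\tilde f(y))\big)\ge|x-y|$—so $\dim_H\mathcal{G}_*(\tilde f|_{[a_k,b_k]})=\dim_H[a_k,b_k]=1$, and likewise for the constant pieces (alternatively invoke Corollary~\ref{coro1}). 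By countable stability of the Hausdorff dimension,
\[
\dim_H\mathcal{G}_*(\tilde f)=\max\Big\{\dim_H\mathcal{G}_*(f),\ \sup_k\dim_H\mathcal{G}_*(\tilde f|_{[a_k,b_k]})\Big\}=\max\{\dim_H\mathcal{G}_*(f),1\}.
\]
If instead $I\setminus X=\emptyset$, take $\tilde f=f$; the coordinate projection $\mathcal{G}_*(f)\to I$ is $1$‑Lipschitz and onto, forcing $\dim_H\mathcal{G}_*(f)\ge\dim_H I=1$, so the formula holds in this case as well. This completes the plan.
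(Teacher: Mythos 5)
Your proposal is correct and follows essentially the same route as the paper: extend $f$ over the complementary gaps of $X$ by the affine (metric-polynomial) interpolant between the endpoint values, note each such piece is Lipschitz so its graph has Hausdorff dimension $1$, and conclude by countable stability of the Hausdorff dimension. The only difference is one of thoroughness: you explicitly verify continuity of the extension at accumulation points of gap endpoints and handle the boundary cases $a\notin X$ or $b\notin X$ by constant extension, details which the paper's proof passes over after writing down the interpolation formula.
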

\begin{proof}
Let $I=[a,b]$ and $X$ be a compact subset of $I,$ then the following cases are possible:
\begin{align*}
    &a,b\in X ~\text{ gives }~ I\setminus X=\cup_{i\in\mathbb{N}}(a_i,b_i),\\
    &a,b\notin X ~\text{ gives }~ I\setminus X=\cup_{i\in\mathbb{N}}(a_i,b_i)\cup\{a,b\},\\
    &a\in X, b\notin X ~\text{ gives }~ I\setminus X=\cup_{i\in\mathbb{N}}(a_i,b_i)\cup\{b\},\\
    &b\in X, a\notin X ~\text{ gives }~ I\setminus X=\cup_{i\in\mathbb{N}}(a_i,b_i)\cup\{a\}.    
\end{align*}  
    In the first case, define a function $\tilde{f}: I\rightarrow\mathcal{K}(\mathbb{R})$ (using metric polynomial interpolant \cite{Dyn3}) as follows: 
    \begin{align}\label{e413}
      \tilde{f}(x):= \left\{\begin{array}{rcl}
    f(x), & \mbox{if} ~~ x\in X, \\ \frac{x-a_i}{b_i-a_i}f(b_i)\oplus\frac{x-b_i}{a_i-b_i}f(a_i), & \mbox{if} \  x\notin X, ~x\in (a_i,b_i).\end{array}\right. \  
    \end{align}
    Clearly, $\tilde{f}(a_i)=f(a_i)$ and $\tilde{f}(b_i)=f(b_i).$ It is noted that the Hausdorff dimension follows countable stability and by Theorem \ref{th34}, we get
    \begin{align*}
        \dim_H\mathcal{G}_*(\tilde{f})&=\sup_{i\in\mathbb{N}}\Big\{\dim_H\mathcal{G}_*(\tilde{f}\big|_X),\dim_H\mathcal{G}_*\Big(\tilde{f}\big|_{(a_i,b_i)}\Big)\Big\}\\
        &=\max\{\dim_H\mathcal{G}_*(f),1\}.
    \end{align*}
    Similarly, remaining cases are observed. This concludes the proof.
\end{proof}
\begin{theorem}
    Let $f\in\mathcal{C}(I,\mathcal{K}_c(\mathbb{R}))$ and $\dim_H\mathcal{G}_*(f) \ge \beta>1.$ Then there exist two functions (say $g_*$ and $h_*$) in $\mathcal{D}_\beta$ such that $$f=g_*\oplus h_*.$$
\end{theorem}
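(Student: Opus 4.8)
The plan is to follow the Cantor-set template of Mauldin--Williams \cite{Mauldin} together with Lemma \ref{l413}, after a preliminary reduction. A short computation from the definition of $\Lambda$ shows that on intervals of $\mathbb{R}$ the metric sum agrees with the Minkowski sum; writing $f(x)=[\underline f(x),\overline f(x)]$ for the (necessarily interval-valued) function $f$, this means $g_*\oplus h_*=f$ holds exactly when $\underline{g_*}+\underline{h_*}=\underline f$ and $\overline{g_*}+\overline{h_*}=\overline f$ pointwise, subject to $\underline{g_*}\le\overline{g_*}$ and $\underline{h_*}\le\overline{h_*}$. Hence, once a $g_*\in\mathcal{C}(I,\mathcal{K}_c(\mathbb{R}))$ is chosen with $0\le\overline{g_*}(x)-\underline{g_*}(x)\le\overline f(x)-\underline f(x)$ for all $x\in I$, its companion $h_*:=[\,\underline f-\underline{g_*},\ \overline f-\overline{g_*}\,]$ is automatically a continuous convex-compact-valued function with $g_*\oplus h_*=f$ (the associativity and commutativity of $\oplus$ on $\mathcal{K}_c(\mathbb{R})$, established above, are used here). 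So the task becomes: produce a single $g_*$ respecting this width bound with $\dim_H\mathcal{G}_*(g_*)=\dim_H\mathcal{G}_*(h_*)=\beta$.

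To seed dimension $\beta$ into both summands I would first extract two disjoint compact sets $C_1,C_2\subseteq I$ with $\dim_H\mathcal{G}_*(f|_{C_i})=\beta$. Since $\mathcal{G}_*(f)$ is compact with $\dim_H\mathcal{G}_*(f)\ge\beta$ and the base projection $\pi(x,f(x))=x$ is a Lipschitz bijection of $\mathcal{G}_*(f)$ onto $I$, a Frostman-type argument gives, for each $\alpha<\beta$, a compact subset of $\mathcal{G}_*(f)$ of dimension exactly $\alpha$ of positive finite $H^\alpha$-measure; cutting such a piece by a hyperplane into two sets of positive $H^\alpha$-measure and assembling two disjoint countable unions along $\alpha_n\uparrow\beta$ yields, after pushing forward by $\pi$, the desired $C_1,C_2$. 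I would then set $g_*=f$ on $C_1$, $g_*=\{0\}$ on $C_2$, and on each complementary interval $(a,b)$ of $C_1\cup C_2$ let $g_*$ be the metric-linear interpolant of its endpoint values with its width capped at $\overline f-\underline f$ (i.e. replace $\overline{g_*}$ by $\min\{\overline{g_*},\ \underline{g_*}+(\overline f-\underline f)\}$); this is continuous, convex-valued, matches the endpoint data, and satisfies the width bound. Then $h_*$ equals $\{0\}$ on $C_1$, equals $f$ on $C_2$, and is a companion interpolant on the gaps, so monotonicity and countable stability of $\dim_H$ give $\dim_H\mathcal{G}_*(g_*)\ge\dim_H\mathcal{G}_*(f|_{C_1})=\beta$ and $\dim_H\mathcal{G}_*(h_*)\ge\dim_H\mathcal{G}_*(f|_{C_2})=\beta$.

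It remains to show that neither graph overshoots $\beta$. On $C_1\cup C_2$ both graphs have dimension $\max\{\beta,1\}=\beta$, so the point is to bound the gap-contributions $\dim_H\mathcal{G}_*(g_*|_{(a,b)})$ and $\dim_H\mathcal{G}_*(h_*|_{(a,b)})$. By construction these are built from the interpolation formulas by linear pieces, coordinate shifts by Lipschitz functions, and $\min$/$\max$ with Lipschitz functions, so $\mathcal{G}_*(f|_{(a,b)})$ maps onto $\mathcal{G}_*(g_*|_{(a,b)})$ and onto $\mathcal{G}_*(h_*|_{(a,b)})$ by Lipschitz maps, whence both gap-contributions are $\le\dim_H\mathcal{G}_*(f|_{(a,b)})$. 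Together with countable stability this yields $\dim_H\mathcal{G}_*(g_*)=\dim_H\mathcal{G}_*(h_*)=\beta$, hence $g_*,h_*\in\mathcal{D}_\beta$ and $f=g_*\oplus h_*$, provided $\dim_H\mathcal{G}_*(f|_{(a,b)})\le\beta$ for every gap.

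That proviso is exactly the main obstacle, and it holds automatically only when $\dim_H\mathcal{G}_*(f)=\beta$. When $\dim_H\mathcal{G}_*(f)>\beta$, some gap $(a,b)$ may have $\dim_H\mathcal{G}_*(f|_{(a,b)})>\beta$, so the naive interpolant is inadmissible and one must instead decompose $f|_{[a,b]}$ on a Cantor set sitting inside that gap and recurse into the new gaps, exactly as in the Mauldin--Williams construction; arranging that this nested procedure halts with both summands of dimension precisely $\beta$ while the width bound is maintained at every stage is the technical heart of the proof. An alternative route for this step is to apply the real-valued decomposition of Mauldin--Williams \cite{Mauldin} / Verma--Massopust \cite{VM} to the endpoint functions $\underline f,\overline f$ on each problematic gap and then restore the ordering constraint.
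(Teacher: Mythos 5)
Your proposal has a genuine gap, and you identify it yourself: the entire argument hinges on $\dim_H\mathcal{G}_*(f|_{(a,b)})\le\beta$ for every gap $(a,b)$ of $C_1\cup C_2$, which fails precisely in the case the theorem is mainly about, namely $\dim_H\mathcal{G}_*(f)>\beta$. The recursive Mauldin--Williams repair you gesture at (decomposing $f$ again on Cantor sets inside each bad gap, nesting indefinitely, while maintaining the width constraint $0\le\overline{g_*}-\underline{g_*}\le\overline f-\underline f$ at every level and proving the limit has dimension exactly $\beta$) is not carried out, and it is exactly the hard part; the "alternative route" of decomposing $\underline f$ and $\overline f$ separately and then "restoring the ordering constraint" is likewise left unexecuted, and that restoration is not a routine fix since the real-valued decompositions of the two endpoint functions need not respect $\underline{g_*}\le\overline{g_*}$ anywhere. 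A secondary issue: your construction of $C_1,C_2$ as countable unions of compact pieces along $\alpha_n\uparrow\beta$ produces $F_\sigma$ sets rather than compact sets, so the complement need not decompose into open intervals as your gap-interpolation requires.

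The paper avoids all of this by not attempting a direct construction from $f$. It first invokes its Theorem 4.16 (the $\beta=1$ decomposition, proved by a Baire-category argument): write $f=g\oplus h$ with $g,h\in\mathcal{D}_1$, so both summands already have one-dimensional graphs globally. It then takes a \emph{single} compact $A\subseteq I$ with $\dim_H\mathcal{G}_*(f|_A)=\beta$ (Falconer's intermediate-value theorem for Hausdorff dimension), extends $g|_A$ and $h|_A$ off $A$ by piecewise metric-linear interpolation to $\tilde{\mathbf g},\tilde{\mathbf h}$ (its Lemma 4.19), and sets $g_*=h\oplus\tfrac12\tilde{\mathbf g}\oplus(-\tfrac12\tilde{\mathbf h})$ and $h_*=g\oplus\tfrac12\tilde{\mathbf h}\oplus(-\tfrac12\tilde{\mathbf g})$. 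Associativity on $\mathcal{K}_c(\mathbb{R})$ gives $g_*\oplus h_*=g\oplus h=f$; on $A$ each summand equals $\tfrac12 f$, contributing dimension $\beta$; off $A$ each is a dimension-one function corrected by a piecewise-linear (hence Lipschitz) term, contributing dimension $1$. Countable stability then gives $\dim_H\mathcal{G}_*(g_*)=\dim_H\mathcal{G}_*(h_*)=\beta$ with no control ever needed on how $f$ behaves off $A$. This is the idea your argument is missing: flatten first via the already-proved $\mathcal{D}_1$ decomposition, then re-inject dimension $\beta$ on one compact set, rather than trying to split $f$ itself across two Cantor sets and fighting the gaps.
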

\begin{proof}
  In view of Theorem \ref{thm411}, there exist $g,h\in \mathcal{D}_1$ such that 
  $$f=g\oplus h.$$
  Also, $\dim_H\mathcal{G}_*(f) \ge \beta,$ from \cite[Theorem 4.10]{Fal}, there exists a compact set $A\subseteq I$ such that
  $$\dim_H\mathcal{G}_*\Big(f\big|_A\Big)=\beta.$$
Consider the restriction maps on $A$ as $\bold{g}=g\big|_A$ and $\bold{h}=h\big|_A.$ 
With reference to Lemma \ref{l413}, $\bold{g}$ and $\bold{h}$ can be continuously extended to $\bold{\tilde{g}}$ and $\bold{\tilde{h}},$ respectively. Define functions $g_*, h_*:I\to\mathcal{K}(\mathbb{R})$ as $$g_*=h\oplus\frac{1}{2}\bold{\tilde{g}}\oplus\Big(\frac{-1}{2}\bold{\tilde{h}}\Big)~ \text{  and  }~h_*=g\oplus\frac{1}{2}\bold{\tilde{h}}\oplus\Big(\frac{-1}{2}\bold{\tilde{g}}\Big).$$ 
Under given assumptions, metric-sum is associative, so $g_*\oplus h_*=g\oplus h=f.$ Notice that $g_*=\frac{1}{2}f$ on $A,$ so $\dim_H\mathcal{G}_*\Big(g_*\big|_A\Big)=\dim_H\mathcal{G}_*\Big(\frac{1}{2}f\big|_A\Big)=\dim_H\mathcal{G}_*\Big(f\big|_A\Big)=\beta.$ From equation \eqref{e413}, we see that the function $\frac{1}{2}\bold{\tilde{g}}\oplus\Big(\frac{-1}{2}\bold{\tilde{h}}\Big)$ is a piecewise linear polynomial on $[0,1]\setminus A.$ Also, from countable stability of Hausdorff dimension
\begin{align*}
  \dim_H\mathcal{G}_*(g_*\big|_{I\setminus A})=\sup_{i\in\mathbb{N}}&  \dim_H\mathcal{G}_*(g_*\big|_{(a_i,b_i)})=\sup_{i\in\mathbb{N}}  \dim_H\mathcal{G}_*(h\big|_{(a_i,b_i)})\\
  &=\dim_H\mathcal{G}_*(h\big|_{I\setminus A})\le\dim_H\mathcal{G}_*(h)=1.
\end{align*}
Thus,
\begin{align*}
    \beta\le\dim_H\mathcal{G}_*(g_*)&=\max\Big\{\dim_H\mathcal{G}_*(g_*\big|_A),\dim_H\mathcal{G}_*(g_*\big|_{I\setminus A})\Big\}\\
    &\le\max\{\beta,1\}=\beta.\\
\end{align*}
On the same lines, $\dim_H\mathcal{G}_*(h_*)=\beta,$ establishing the claim.
\end{proof}
After showing the existence of decomposition functions, a natural quest may come to mind to construct such functions. For real-valued functions, Wingren \cite{Wing} was the first to construct such decomposition functions whose Hausdorff dimensions are one. Following the approach of \cite{Wing}, we are working on this issue for SVFs in another manuscript. Hopefully, in the future, our next manuscript will answer our query because $1 \le  \dim_H(\mathcal{G}(f)) \le \underline{\dim}_B(\mathcal{G}(f)) = 1$ implies $\dim_H(\mathcal{G}(f))=1$ for every $f \in \mathcal{C}(I,\mathcal{K}(\mathbb{R}))$. 

Now, we note the next interesting result and remark. To the best of our knowledge, we could not find any literature to record these facts.
\begin{theorem}
    $\dim_H(\mathcal{K}(\mathbb{R})) \ge 2$.
\end{theorem}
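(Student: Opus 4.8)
The plan is to exhibit a bi-Lipschitz embedding of the unit square $[0,1]^2$ into the metric space $(\mathcal{K}(\mathbb{R}),\mathfrak{H})$ and then appeal to the standard facts that Hausdorff dimension is monotone under inclusion and invariant under bi-Lipschitz maps (see \cite{Fal}). Concretely, I would define $\Phi:[0,1]^2\to\mathcal{K}(\mathbb{R})$ by
$$\Phi(s,t):=\{s,\,t+10\},$$
so that the two points of $\Phi(s,t)$ lie in the well-separated blocks $[0,1]$ and $[10,11]$; here the constant $10$ is chosen only so that the gap ($9$) exceeds the diameter ($1$) of each block.

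The heart of the argument is the identity
$$\mathfrak{H}\big(\Phi(s,t),\Phi(s',t')\big)=\max\{|s-s'|,|t-t'|\}\qquad\text{for all }(s,t),(s',t')\in[0,1]^2.$$
To see it, note that for $s\in[0,1]$ one has $|s-s'|\le 1<9\le|s-(t'+10)|$, so $D\big(s,\Phi(s',t')\big)=|s-s'|$, and likewise $D\big(t+10,\Phi(s',t')\big)=|t-t'|$; the symmetric statements hold with the two sets interchanged, and substituting into the definition of $\mathfrak{H}$ gives the claim. Since $\tfrac{1}{\sqrt2}\|(s,t)-(s',t')\|_2\le\max\{|s-s'|,|t-t'|\}\le\|(s,t)-(s',t')\|_2$, the map $\Phi$ is bi-Lipschitz onto its image.

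Consequently $\dim_H\Phi([0,1]^2)=\dim_H[0,1]^2=2$, and $\Phi([0,1]^2)\subseteq\mathcal{K}(\mathbb{R})$, so monotonicity of Hausdorff dimension gives $\dim_H(\mathcal{K}(\mathbb{R}))\ge 2$. I do not expect a genuine obstacle here: the only delicate point is the Hausdorff-distance computation, which is precisely why the blocks are taken far apart (so that the nearest-point assignment is forced and $\mathfrak{H}$ collapses to an $\ell^\infty$-distance on the parameters). Moreover, the very same idea with $n$ separated blocks, $\Phi_n(t_1,\dots,t_n):=\{t_1,\,t_2+10,\,\dots,\,t_n+10(n-1)\}$, embeds $[0,1]^n$ bi-Lipschitzly into $\mathcal{K}(\mathbb{R})$ for every $n\in\mathbb{N}$, which in fact yields $\dim_H(\mathcal{K}(\mathbb{R}))=\infty$; I would record this stronger conclusion as a remark following the theorem.
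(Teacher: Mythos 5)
Your proof is correct, but it runs in the opposite direction from the paper's. The paper defines a Lipschitz map \emph{out of} $\mathcal{K}(\mathbb{R})$, namely $A\mapsto(\min A,\max A)\in\mathbb{R}^2$, observes that its image is the half-plane $\{(x,y):x\le y\}$ of dimension $2$, and invokes the fact that Lipschitz maps do not increase Hausdorff dimension. You instead build a bi-Lipschitz embedding \emph{into} $\mathcal{K}(\mathbb{R})$ of the unit square, using two well-separated blocks so that the nearest-point matching is forced and $\mathfrak{H}$ becomes exactly the $\ell^\infty$ distance on the parameters; your verification of that identity is correct. Both arguments are complete and about equally short; the paper's has the mild elegance of a single globally defined projection, while yours has the advantage that it generalizes cleanly. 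Indeed, the paper follows its theorem with a remark deducing $\dim_H(\mathcal{K}(\mathbb{R}))=\infty$ from the map $(x_1,\dots,x_n)\mapsto\{x_1,\dots,x_n\}$ on the ordered cone $C_n$, but there it only establishes that this map is Lipschitz (which bounds the dimension of the image from \emph{above} by $n$, not from below); without a separation hypothesis the inverse is not Lipschitz (e.g.\ $\mathfrak{H}(\{0,0.4,0.5\},\{0,0.1,0.5\})=0.1$ while the max-distance of the parameter vectors is $0.3$). Your $n$-block version $\Phi_n(t_1,\dots,t_n)=\{t_1,t_2+10,\dots,t_n+10(n-1)\}$ is genuinely bi-Lipschitz and so gives a clean, fully justified proof of the stronger infinite-dimensionality statement; you should indeed record it.
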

\begin{proof}
    Let us first begin by defining $\Phi: \mathcal{K}(\mathbb{R}) \to\mathbb{R}^2$ as $\Phi(A)=(x_*,y_*),$ where $x_*=\min_{x \in A} x$ and $y_*=\max_{y \in A} y$. Since $A$ is compact, the map is well-defined. Observe that $\Phi(\mathcal{K}(\mathbb{R}))= \{(x,y) \in \mathbb{R}^2: x\le y\}$ and $\dim_H(\{(x,y) \in \mathbb{R}^2: x\le y\})=2.$ Let $A,B \in \mathcal{K}(\mathbb{R})$ and $\mathbb{R}^2$ be equipped with the Euclidean metric. Using the fact that $|x_* -\overline{x}_*| \le \mathfrak{H}(A,B) $ and $|y_* -\overline{y}_*| \le \mathfrak{H}(A,B) $ where $x_*=\min_{x \in A} x$,  $y_*=\max_{y \in A} y$, $\overline{x}_*=\min_{x \in B} x$ and $\overline{y}_*=\max_{y \in B} y$, we obtain
    $$\|\Phi(A) -\Phi(B)\|_2 = \|(x_*,y_*) -(\overline{x}_*,\overline{y}_*) \|_2 \le \sqrt{2} \mathfrak{H}(A,B),$$
    that is, $\Phi$ is Lipschitz. Using \cite[Corollary 2.4]{Fal}, $\dim_H(\mathcal{K}(\mathbb{R})) \ge \Phi(\mathcal{K}(\mathbb{R}))=2$, this completes the assertion.
\end{proof}
\begin{remark}
    The mapping $\Phi: C_n=\{(x_1,x_2,x_3,\dots,x_n) \in \mathbb{R}^n: x_1 \le x_2\le \dots \le x_n\}  \to \mathcal{K}(\mathbb{R})$ defined by $\Phi(x_1,x_2,x_3,\dots,x_n)=\{x_1,x_2,x_3,\dots,x_n\}$ is one-one. Every finite set is compact and $\dim_H(C_n)=n.$ Also, 
    \begin{align*}
    \mathfrak{H}(\Phi(x_1,x_2,x_3,\dots,x_n),& \Phi(y_1,y_2,y_3,\dots,y_n))\\
    &=\mathfrak{H}(\{x_1,x_2,x_3,\dots,x_n\},\{y_1,y_2,y_3,\dots,y_n\} ) \\&\le \max_{1 \le i\le n} |x_i -y_i|.
    \end{align*}
    So, $\Phi$ is Lipschitz with respect to max-norm on $C_n.$ It follows that $n= \dim_H(C_n) =\dim_H(\Phi(C_n)) \le \dim_H(\mathcal{K}(\mathbb{R}))$. Since  $n \in \mathbb{N}$ was arbitrary, $\dim_H(\mathcal{K}(\mathbb{R})) \to \infty.$
\end{remark}

\subsection*{Acknowledgements}
The first author is financially supported by a Ministry of Education Fellowship at the Indian Institute of Information Technology (IIIT), Allahabad. The second author is supported by the SEED grant project of IIIT Allahabad.

\bibliographystyle{amsplain}

\begin{thebibliography}{10}

\bibitem{AV3} E. Agrawal, S. Verma, Approximation aspects of set-valued fractal surfaces, Discrete Contin. Dyn. Syst. Ser. S (2025) Doi: 10.3934/dcdss.2025034.
\bibitem{Artstein1} Z. Artstein, Piecewise linear approximations of set-valued maps, J. Approx. Theory 56(1) (1989) 41-47.
\bibitem{Artstein2} Z. Artstein, Set-valued measures, Trans. Amer. Math. Soc. 165 (1972) 103-125.
\bibitem{Aumann} R. J. Aumann, Integrals of set-valued functions, J. Math. Anal. Appl. 12(1) (1965) 1-12.

\bibitem {MF1} M. F. Barnsley, Fractal functions and interpolation, Constr. Approx. 2 (1986) 303-329.
\bibitem{MBS1} M. Ben Slimane, Multifractal formalism for self-similar functions under the action of nonlinear dynamical systems, Constr. Approx.  15(2) (1999) 209-240.

 \bibitem{MBS2} M. Ben Slimane, Multifractal formalism for self-similar functions associated to the  n-scale dilation family, Math. Proc. Cambridge Philos. Soc. 136(1)  (2004)  195-212.
\bibitem{Boyd} D. W. Boyd, J. S. Wong, On nonlinear contractions, Proc. Amer. Math. Soc. 20(2) (1969) 458-464.

\bibitem {Dyn1} N. Dyn, E. Farkhi, A. Mokhov, Approximation of set-valued functions: Adaptation of classical approximation operators, World Scientific, 2014.

\bibitem{Dyn2} N. Dyn, E. Farkhi, Set-valued approximations with Minkowski averages - convergence and convexification rates, Numer. Funct. Anal. Optim. 25(3–4) (2005) 363–377. 

\bibitem{Dyn3} N. Dyn, E. Farkhi, A. Mokhov, Approximations of set-valued functions by metric linear operators, Constr. Approx. 25 (2007) 193–209. 

\bibitem {Fal} K. J. Falconer, Fractal Geometry: Mathematical Foundations and Applications, John Wiley Sons Inc., New York, 1999.

\bibitem{Fal1} K. J. Falconer, On the Hausdorff dimensions of distance sets, Mathematika 32(2) (1985) 206-212. 

\bibitem{Fraser1} J. M. Fraser, Distance sets, orthogonal projections and passing to weak tangents, Isr. J. Math. 226 (2018) 851-875.

\bibitem{Fraser} J. M. Fraser, Assouad dimension and fractal geometry, Vol. 222, Cambridge University Press, 2020.
 \bibitem{Hiai} F. Hiai, Radon-Nikodym theorems for set-valued measures, J. Multivar. Anal. 8(1) (1978) 96-118.



\bibitem {JH} J. E. Hutchinson, Fractals and self-similarity, Indiana Univ. Math. J. 30 (1981) 713-747.


\bibitem{Jack} J. Jachymski, Equivalence of some contractivity properties over metrical structures, Proc. Amer. Math. Soc. 125(8) (1997) 2327-2335.
\bibitem{JI} J. Jachymski, I. Jóźwik, Nonlinear contractive conditions: a comparison and related problems, Banach Center publications 1(77) (2007) 123-146.



\bibitem{Mauldin} R. D. Mauldin, S. C. Williams, On the Hausdorff dimension of some graphs, Trans. Amer. Math. Soc. 298(2) (1986) 793-803.	

\bibitem{Michta} M. Michta, J. Motyl, Selection properties and set-valued Young integrals of set-valued functions, Results Math 75(164) (2020) 1-22. 

\bibitem{Moran} M. Mor\'an, J. M. Rey, Singularity of self-similar measures with respect to Hausdorff measures, Trans. Amer. Math. Soc. 350(6) (1998) 2297-2310.

\bibitem {M2} M. A. Navascu\'es, Fractal approximation, Complex Anal. Oper. Theory 4(4) (2010) 953-974.

\bibitem{Nikodem} K. Nikodem, Z. P$\acute{a}$les, Minkowski sums of Cantor-type sets, Colloq. Math. 119(1) (2010) 95-108. 
\bibitem{PSV} M. Pandey, T. Som, S. Verma, Set-valued 
$\alpha$-fractal functions, Constr. Approx. 60 (2024) 105-133. 

\bibitem{Rakotch} E. Rakotch, A note on contractive mappings, Proc. Amer. Math. Soc. 13(3) (1996) 459-465.

\bibitem{SR2} S. Ri, A new idea to construct the fractal interpolation function, Indag. Math. 29(3) (2018) 962-971.
\bibitem{Rudin} W. Rudin, Principles of mathematical analysis, 3rd Edition, McGraw-Hill, New York, 1976.
\bibitem{Rogers} C. A. Rogers, Hausdorff measures, Cambridge University Press, 1998.

\bibitem{Shmerkin} P. Shmerkin, On the Hausdorff dimension of pinned distance sets, Isr. J. Math. 230(2) (2019) 949-972.

\bibitem{Steinhaus}  H. Steinhaus, A new property of G. Cantor’s set, Wektor 6 (1917) 105–107 (in Polish).

\bibitem{Vitale} R. A. Vitale, Approximations of convex set-valued functions, J. Approx. Theory 26 (1979) 301-316.


\bibitem{VP12} M. Verma, A. Priyadarshi, A note on the dimensions of difference and distance sets for graphs of functions, Chaos, Solitons \& Fractals 184 (2024) 114986.






 
\bibitem{VM}S. Verma, P. R. Massopust, Dimension preserving approximation, Aequat. Math. 96(6) (2022) 1233-1247. 
\bibitem{Mverma2} M. Verma, A. Priyadarshi, Dimensions of new fractal functions and associated measures, Numer. Algor. 94(2) (2023) 817–846.






\bibitem{Wing} P. Wingren, Dimensions of graphs of functions and Lacunary decompositions of spline approximations, Real Anal. Exchange 26(1) (2001) 17-26.

\end{thebibliography}

\end{document}